\definecolor{MyBlue}{cmyk}{1,0.13,0,0.63}
\definecolor{MyGreen}{cmyk}{0.91,0,0.88,0.52}
\newcommand{\mylinkcolor}{MyBlue}
\newcommand{\mycitecolor}{MyGreen}
\newcommand{\myurlcolor}{black}
\newtheorem{thm}{Theorem}[section]
\newtheorem*{thm*}{Theorem}
\newtheorem{cor}[thm]{Corollary}
\newtheorem{lemma}[thm]{Lemma}
\newtheorem{prop}[thm]{Proposition}
\theoremstyle{definition}
\newtheorem{defn}[thm]{Definition}
\theoremstyle{remark}
\newtheorem{remark}[thm]{Remark}
\newtheorem*{notation}{Notation}
\newtheorem{example}[thm]{Example}
\newtheorem{remarks}[thm]{Remarks}
\newtheorem{examples}[thm]{Examples}
\newcommand{\End}{\ensuremath{\mathrm{End}}}
\newcommand{\wh}{\ensuremath{\widehat}}
\newcommand{\R}{\ensuremath{\mathbb{R}}}
\newcommand{\N}{\ensuremath{\mathbb{N}}}
\newcommand{\Z}{\ensuremath{\mathbb{Z}}}
\newcommand{\C}{\ensuremath{\mathbb{C}}}
\newcommand{\T}{\ensuremath{\mathbb{T}}}
\def\calT{\mathcal{T}}
\def\calC{\mathcal{C}}
\def\calL{\mathcal{L}}
\def\calB{\mathcal{B}}
\def\calH{\mathcal{H}}
\def\calF{\mathcal{F}}
\def\calA{\mathcal{A}}
\def\calS{\mathcal{S}}
\def\calE{\mathcal{E}}
\def\calW{\mathcal{W}}
\def\calU{\mathcal{U}}
\def\calG{\mathcal{G}}
\def\bP{\mathbf{P}}
\newcommand{\frakh}{{\mathfrak h}}
\renewcommand{\epsilon}{\varepsilon}
\newcommand{\ol}{\overline}
\theoremstyle{definition}
\DeclareMathOperator{\Dom}{Dom}
\DeclareMathOperator{\Tr}{Tr}
\newcommand{\A}{\mathcal{A}}
\newcommand{\rst}[1]{\ensuremath{{\mathbin\upharpoonright}%
\raise-.5ex\hbox{$#1$}}}
\newcommand{\Rmnum}[1]{\expandafter\@sl217--242owromancap\romannumeral #1@}
\author{C. Bourne}
\address{WPI-AIMR, Tohoku University,
2-1-1 Katahira, Aoba-ku, Sendai, 980-8577, Japan \emph{and} 
RIKEN iTHEMS, 2-1 Hirosawa, Wako, Saitama 351-0198, Japan}
\email{chris.bourne@tohoku.ac.jp}
\author{B. Mesland}
\address{Mathematisch Instituut, Universiteit Leiden,
Niels Bohrweg 1, 2333 CA Leiden, The Netherlands}
\email{b.mesland@math.leidenuniv.nl}
\thanks{Keywords: Operator algebras, Groupoid and Morita equivalence, Gabor analysis, Wannier basis}
\thanks{MSC (2010): 46L08, 46L55, 81R60}
\date{\today}
\begin{document}

\title{Localised module frames and Wannier bases from groupoid Morita equivalences}

\begin{abstract}
Following the operator algebraic approach to Gabor analysis, 
we construct frames of translates for the Hilbert space localisation of 
the Morita equivalence bimodule arising from a groupoid equivalence between Hausdorff groupoids, where one 
of the groupoids is \'{e}tale and with a compact unit space.
For finitely generated and projective submodules, we show these  frames are orthonormal 
bases if and only if the module is free. We then apply 
this result to the study of localised Wannier bases of spectral subspaces of 
Schr\"{o}dinger operators with atomic potentials supported on (aperiodic) Delone sets. 
The noncommutative Chern numbers provide a topological obstruction to fast-decaying Wannier bases 
and we show this result is stable under deformations of the underlying Delone set. 
\end{abstract}

\maketitle

\parindent=0.0in
\parskip=0.04in

\vspace{-0.2cm}

\section{Introduction}

A key question in time-frequency analysis and related fields is the reconstruction of elements in a Hilbert space $\frakh$
via a set of vectors 
$\{T_j w_1,\ldots,T_j w_m\}_{j\in J}$ spanning $\frakh$ and
where $\{T_j\}_{j\in J} \subset \calB(\frakh)$ is some canonically defined 
set of operators. An important example  are (multi-window) Gabor frames, 
where given a locally compact and abelian group $G$, a frame on 
$L^2(G)$ is constructed via translation and modulation operators from 
a closed subset $\Lambda \subset G\times \wh{G}$ on a window function.
Morita equivalence bimodules of $C^*$-algebras, also called 
imprimitivity bimodules, have been shown to be a useful tool in the 
construction of such Gabor frames~\cite{Luef09, AE20, AJL19}. 
One may also consider frames generated from translations by elements of a 
discrete group or \'{e}tale groupoid, which we call \emph{frames of translates}, cf.~\cite[Chapter 9]{Christensen} or~\cite{BMS16}.

Closely related to frames of translates and arising from physics are Wannier bases. 
Given a Schr\"{o}dinger-type operator $H$ with periodic potential acting 
on $L^2(\R^d, \C^n)$,
a Wannier basis is an orthonormal 
basis of a spectral subspace of $H$ 
constructed from a finite set of functions along with their translations in $\Z^d$. 
Because the operator $H$ has a periodic potential, such bases exist by the Bloch--Floquet transform. 
The regularity of Wannier bases change drastically depending on the topological properties 
of the spectral band of the Schr\"{o}dinger operator, where a delocalised Wannier basis can be 
used as an indicator that the system has a non-trival topological phase, 
see~\cite{MagneticWannier, DeNittisLeinWannier, Kuchment, HaldaneWannier, WannierDecay} for example. 
Wannier bases with exponential decay can be constructed for periodic and 
aperiodic Hamiltonians such that the compression of a position operator by the Fermi 
projection has uniform spectral gaps~\cite{SWL1, SWL2}.

In the context of periodic systems with a space group translation symmetry $G \subset \R^d$, 
it was shown by Ludewig and Thiang that the existence of a fast-decaying Wannier basis is equivalent to whether a 
finitely generated and projective $C^*_r(G)$-module is free or not~\cite{LudewigThiang}.

The purpose of this paper is twofold. 
First, using a similar framework to~\cite{AJL19}, we study the relation between 
frames of translates and 
Morita equivalence bimodules  arising from groupoid equivalences. 
We then use this relation to extend the results of Ludewig and Thiang~\cite{LudewigThiang} 
on fast-decaying Wannier bases  to  \'{e}tale groupoids. Regularity of frames  
is examined using
pre-Morita equivalence bimodules of algebras defined from derivations and differential seminorms.

For both of our aims, our guiding example is the \'{e}tale groupoid $\calG_\mathrm{Del}$ constructed from 
a Delone set $\Lambda \subset \R^d$ and which is equivalent to the crossed product groupoid 
of the translation action on the orbit space of $\Lambda$,  $\Omega_\Lambda \rtimes \R^d$. 
In previous work~\cite{ BMes, BP17}, the index theory of $\calG_\mathrm{Del}$ was studied and 
its application to aperiodic topological phases. 
In~\cite{Kreisel},  Gabor frames of $L^2(\R^d)$ were constructed from 
Delone subsets of $\R^{2d}$ with finite local complexity and the groupoid $\calG_\mathrm{Del}$
using results from~\cite{GOR15}. Gabor duality was  shown for Gabor frames constructed 
from model sets in~\cite{ModelSetGabor}.
We  do not consider the important question of 
Gabor duality in this work.

\subsection*{Outline and main results}

Because our results bring together constructions from time-frequency analysis, groupoids, 
$C^*$-modules and Morita equivalence, we give a brief overview of these concepts in Section \ref{sec:Prelims}. 
In Section \ref{subsec:derivations_and_bimodules}, 
using the framework of differential seminorms (cf.~\cite{BC91}), we construct pre-Morita equivalence 
bimodules for pre-$C^\ast$-subalgebras obtained from a finite family of commuting unbounded $*$-derivations.

In Section \ref{sec:Gpoid_Gabor_General} we consider a groupoid equivalence $\calG \leftarrow Z \to \calH$ 
of Hausdorff, second countable and locally 
compact groupoids $\calG$ and $\calH$, where $\calH$ is 
\'{e}tale with a compact unit space $\calH^{(0)}$. By choosing the evaluation state for some $x \in \calH^{(0)}$ we obtain 
a translation action of the fibre $r^{-1}(x)$ on the Hilbert space localisation $\frakh_x$ of the 
Morita equivalence bimodule between $C^*_r(\calG)$ and $C^*_r(\calH)$. 
This allows us to construct a normalised tight frame of translates for $\frakh_x$ from the 
$C^*$-module frame of the Morita equivalence bimodule. Restricting to finitely 
generated and projective $C^{*}_{r}(\calH)$-submodules, 
we obtain a frame of translates with a finite generating set 
for a subspace of the Hilbert space localisation $\frakh_x$. This frame is an 
orthonormal basis for all $x\in \calH^{(0)}$ if and only if the finitely generated and 
projective module is free, and thus its class in the reduced $K$-theory of $C^{*}_{r}(\calH)$ is trivial. 
These results are extended in Section \ref{sec:Twsited_Gabor} 
to the case of abstract transversals with a normalised $2$-cocycle twist.

We apply these results to the \'{e}tale Delone groupoid in  Section \ref{Sec:Delone_application}. 
We consider a magnetic Schr\"{o}dinger operator with an atomic potential $v$ arranged on a 
Delone set $\Lambda \subset \R^d$,
$$
  H_\Lambda =  \sum_{j=1}^d \!\Big(-i\frac{\partial}{\partial x_j} - A_j \Big)^2 
  + \sum_{p \in \Lambda} v(\cdot - p),
$$
where $A$ is the magnetic potential. Results by Bellissard et al. show that for 
sufficiently regular  $v$, $H_\Lambda$ 
and its magnetic translates are affiliated to the 
crossed product $C^*$-algebra $C^*_r(\Omega_\Lambda \rtimes \R^d, \theta)$ with 
$\theta$ a magnetic twist~\cite{Bel92,BHZ00}. We show that for any Delone set $\calL$ in the transversal 
subset $\Omega_0 \subset \Omega_\Lambda$, 
a gapped spectral subspace of $H_\calL$ 
has a normalised tight  frame built from the magnetic translations 
$\{U_y\}_{y\in \calL}$. This frame is an orthonormal basis if and only if 
the corresponding finitely generated and projective $C^*_r(\calG_\mathrm{Del}, \theta)$-module 
is free. Using derivations of the groupoid algebras and differential seminorms, 
we show this normalised 
tight frame  has faster than polynomial decay. We therefore see that 
topological properties of spectral subspaces of the Delone Schr\"{o}dinger operator can 
be related to the regularity of (aperiodic) Wannier bases.

The regularity of such Wannier basis is closely related to the Localisation Dichotomy Conjecture for non-periodic 
insulators raised in~\cite[Section 5 (arXiv version)]{HaldaneWannier} and further 
studied in~\cite{PanatiDeloneWannier, LuStubbs21}. We 
prove  a weaker version  of this conjecture in Section \ref{subsec:Loc_dichotomy} 
and show that the existence of Wannier bases with faster than polynomial decay is 
equivalent to the existence of Wannier bases such that $\sum_j (1+|x|^2) |w_j (x)|^2 \in L^1(\R^d)$. This in turn 
is equivalent to the spectral projection defining a freely generated $C^*_r(\calG_\mathrm{Del}, \theta)$-module.
Our results are weaker than those posed in~\cite{HaldaneWannier} as we consider a family of 
Schr\"{o}dinger operators and Hilbert spaces rather than a single Hamiltonian. Similarly we 
do not consider Wannier bases with exponential decay.

By relating the existence of a localised Wannier basis to a $K$-theoretic property,  
the noncommutative Chern numbers for $C^*_r(\Omega_\Lambda \rtimes \R^d, \theta)$ 
and $C^*_r(\calG_\mathrm{Del}, \theta)$ studied in~\cite{BMes, BP17, BRCont} give a 
topological  obstruction to a Wannier basis with fast decay. 
We also show that these Chern number formulas are continuous under deformations of the 
magnetic field or Delone atomic potential provided the spectral gap stays open. This implies that
 a non-localised Wannier basis is stable under deformations of the 
 atomic potential (e.g. from a periodic lattice to an 
aperiodic point pattern).

\section{Preliminaries} \label{sec:Prelims}

\subsection{Hilbert space frames}
Let us recall a few basic definitions from time-frequency analysis.

\begin{defn}
Let $\mathfrak{h}$ be a Hilbert space and $\{g_j\}_{j\in J}$ is a collection of elements in $\mathfrak{h}$. 
We say that $\{g_j\}_{j\in J}$ form a Hilbert space frame if there are 
constants $C,\, D>0$ such that 
$$
   C \|\psi \|^2 \leq \sum_{j\in J} \big| \langle g_j, \psi \rangle \big|^2 \leq D \|\psi\|^2 \quad \text{for all }\psi \in \mathfrak{h}.
$$
If $C=D$, then $\{g_j\}_{j \in J}$ is called a tight frame. 
If $C=D=1$, then $\{g_j\}_{j \in J}$ is called a normalised tight frame or Parseval frame.
\end{defn}

Orthonormal bases are obvious examples of  normalised tight frames. The restriction of an 
orthonormal basis of a Hilbert space to a closed subspace yields a normalised  tight frame for the 
subspace. Normalised tight frames always arise as a compression of an 
orthonormal basis.

\begin{prop}[\cite{HanLarson}, Section 1]
Let $\{g_j \}_{j\in J}$ be a normalised tight frame of a Hilbert space $\frakh_1$. Then there is
Hilbert space $\frakh_2$ and an orthonormal basis $\{e_j\}_{j\in J}$ of $\frakh_1 \oplus \frakh_2$ 
such that  $g_j = \mathrm{pr}_1(e_j)$.
\end{prop}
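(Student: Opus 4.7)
The plan is to dilate the Parseval frame to an orthonormal basis via the analysis operator. First I would introduce the analysis operator $T\colon \frakh_1 \to \ell^2(J)$ defined by
$$
T\psi = \bigl(\langle g_j,\psi\rangle\bigr)_{j\in J},
$$
and observe that the Parseval frame condition $\sum_j |\langle g_j,\psi\rangle|^2 = \|\psi\|^2$ is precisely the statement that $T$ is an isometry. Consequently $T^*T = \mathrm{id}_{\frakh_1}$ and $P := TT^*$ is the orthogonal projection in $\calB(\ell^2(J))$ onto the closed range $\Ran(T)$. A direct computation against the standard orthonormal basis $\{e_j\}_{j\in J}$ of $\ell^2(J)$ gives $T^* e_j = g_j$.

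Next I would define $\frakh_2 := \Ran(T)^\perp = \Ker(T^*)$ and consider the map
$$
U\colon \frakh_1 \oplus \frakh_2 \longrightarrow \ell^2(J), \qquad U(\psi,\phi) := T\psi + \phi.
$$
Because $T$ is an isometry onto $\Ran(T)$ and $\frakh_2$ is its orthogonal complement in $\ell^2(J)$, $U$ is a unitary isomorphism. Transporting the standard orthonormal basis of $\ell^2(J)$ back through $U^{-1}$ yields an orthonormal basis $\{\tilde e_j\}_{j\in J}$ of $\frakh_1 \oplus \frakh_2$ with
$$
\tilde e_j = U^{-1}(e_j) = \bigl(T^* e_j,\, (I-P) e_j\bigr) = \bigl(g_j,\, (I-P)e_j\bigr).
$$
Applying $\mathrm{pr}_1$ recovers $g_j$, which is the claim.

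The only subtleties worth double-checking are that $T$ is well-defined and bounded for index sets $J$ of arbitrary cardinality (which follows from the upper frame bound $D=1$), that $\Ran(T)$ is genuinely closed in $\ell^2(J)$ (which is automatic since $T$ is an isometry), and that the identification $T^* e_j = g_j$ holds in our convention (immediate from $\langle T^*e_j,\psi\rangle_{\frakh_1} = \langle e_j, T\psi\rangle_{\ell^2(J)} = \overline{\langle g_j,\psi\rangle}$-conjugate bookkeeping). There is no real obstacle; the entire content of the proposition is the observation that the Parseval condition is equivalent to $T$ being an isometry, after which the dilation is forced.
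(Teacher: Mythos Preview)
Your argument is correct and is exactly the standard dilation proof: the analysis operator $T$ is an isometry into $\ell^2(J)$, and pulling back the canonical basis through the unitary $U\colon \frakh_1\oplus \Ran(T)^\perp \to \ell^2(J)$ yields the desired orthonormal basis. The paper itself does not supply a proof but simply cites \cite{HanLarson}, Section~1, where precisely this argument appears; so there is nothing to compare beyond noting that you have reproduced the intended proof. One cosmetic point: with the paper's right-linear inner product convention, $\langle e_j, T\psi\rangle_{\ell^2(J)} = (T\psi)_j = \langle g_j,\psi\rangle$ directly, so no conjugate bookkeeping is needed and your parenthetical remark about $\overline{\langle g_j,\psi\rangle}$ can be dropped.
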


Any Hilbert space frame yields an invertible 
frame operator 
$$
 S:\mathfrak{h} \to \mathfrak{h}, \qquad 
 S(\psi) = \sum_{j\in J} g_j \langle  g_j , \psi  \rangle .
$$
We note that, in contrast to the Gabor analysis literature, we work with inner-products 
that are linear on the right. This is to make our results more easily compatible with 
right $C^*$-modules and their Hilbert space localisations (Definition \ref{def:HS_localisation} below). 
Because $S$ is invertible, one obtains a reconstruction formula for elements in $\mathfrak{h}$.
$$
   \psi = \sum_{j\in J} S^{-1} g_j \langle g_j, \psi \rangle, \quad \psi \in\mathfrak{h}.
$$
The elements $\{S^{-1}g_j\}_{j\in J}$ are called the dual frame to $\{g_j\}_{j\in J}$.

\subsection{Pre-$C^*$-modules and Morita equivalence} \label{subsec:C_mod_MEBs}

Following a similar framework to~\cite{AJL19}, we will use $C^*$-modules and 
Morita equivalence bimodules 
to study questions around frames of translates for their Hilbert space localisations. We now recall some basic definitions and establish 
notation. Further details can be found in~\cite{Blackadar, RaeburnWilliams}.

We will also be interested in the case where the $C^*$-algebras $A$ and 
$B$ have dense $\ast$-subalgebras $\calA$ and $\calB$.

\begin{defn} Let $B$ be a $C^*$-algebra, $\calB\subset B$ a dense $*$-subalgebra and 
$\calE_{\calB}$ a vector space that is also a right $\calB$-module. We say that $\mathcal{E}_{\mathcal{B}}$ is 
a right  inner product $\mathcal{B}$-module if there is a  sesquilinear pairing 
$(e_1,e_{2})\mapsto (e_1\mid e_2)_{\mathcal{B}}\in \mathcal{B}$  linear in the second variable
such that for $e_1,e_2,e\in \mathcal{E}$
\begin{align*}
  &(e_1 \mid e_2)_\mathcal{B} = (e_2 \mid e_1)_\mathcal{B}^*, 
  &&(e_1 \mid e_2\cdot b)_\mathcal{B} = (e_1\mid e_2)_\mathcal{B} \, b,  \\
  &(e\mid e)_\mathcal{B} \geq 0 \in B,
  &&(e\mid e)_{\mathcal{B}}= 0\Rightarrow e=0.
\end{align*}
An inner product module $\mathcal{E}_{\mathcal{B}}$ is called full if $\mathrm{span}\{ (e_1\mid e_2)\,:\, e_1, e_2\in \mathcal{E}_{\mathcal{B}}\} $ is $C^{*}$-norm dense in $B$. 
If $B$ is a $C^{*}$-algebra, a $C^{*}$-module is a right inner product $B$-module that is complete in the norm $\|e\|^{2}:=\|(e\mid e)_{B}\|_{B}$. 
\end{defn}

 Given a $C^*$-algebra $A$ and dense $\ast$-subagebra $\calA$, a left inner product $\calA$-module ${}_{\calA}\calE$ can be 
analogously defined, where the sesquilinear pairing $(e_1,e_2)\mapsto {}_{\calA}(e_1 \mid e_2)$ is linear in the 
first variable and ${}_{\calA}(a\cdot e_1 \mid e_2) = a \,{}_{\calA}(e_1\mid e_2)$. 
 In case the algebras $\calA$ and $\calB$ and left/right inner product module structure are clear from context, 
 we will suppress subscripts and write $\calE$ for $_\calA\calE_\calB$.

On an inner product module $\mathcal{E}_{\mathcal{B}}$, the norm $\|e\|^{2}:=\|(e\mid e)_{B}\|_{B}$ is defined and the completion of $\mathcal{E}_{\mathcal{B}}$ in this norm is a right $C^{*}$-module $E_{B}$ over $B$. For a right inner product $\mathcal{B}$-module, the $*$-algebra of finite rank operators $\textnormal{Fin}_{\mathcal{B}}(\mathcal{E}_{\mathcal{B}})$ is defined
to be the algebraic span of the finite-rank operators $\{\Theta^R_{e_1,e_2}\}_{e_1,e_2\in \mathcal{E}},$ where
$$
  \Theta^R_{e_1,e_2}(e_3) = e_1 \cdot (e_2\mid e)_\calB, \quad \left(\Theta^R_{e_1,e_2}\right)^{*}:=\Theta^R_{e_2,e_1},\quad e_1,\, e_2,\, e \in \mathcal{E}.
$$
When $E_{B}$ is a $C^{*}$-module over a ${*}$-algebra $B$,
the compact endomorphisms $\mathbb{K}_B(E)$ are defined as the operator norm closure of $\textnormal{Fin}_{B}(E)$. 
It is a closed two-sided ideal in the $C^{*}$-algebra $\End^*_B(E)$ of adjointable operators 
on $E_B$. 

\begin{defn}
Let $\calE_{\calB}$ be a right inner product $\calB$-module. A set $\{e_{1},\cdots, e_n\}$ is called a 
{finite} module frame if 
\[\mathrm{Id}_{\calE}=\sum_{k=1}^{n}\Theta^{R}_{e_{k},e_{k}}.\]
If $E_B$ is a right $C^*$-module, a countable subset $\{e_j\}_{j\in J} \subset E_B$ is a (right) $C^*$-module 
frame if $\sum_j \Theta^R_{e_j,e_j}$ converges strictly to $\mathrm{Id}_E$.
\end{defn}

We remark that any countably generated right $C^*$-module over a $\sigma$-unital 
algebra $B$ admits a $C^*$-module frame~\cite{Blackadar, RaeburnWilliams}. 
If an inner product module $\calE_\calB$ admits a finite frame $\{v_j\}_{j=1}^n$, then there is a 
projection $p =p^* =p^2 \in M_n(\calB)$ and right module maps
\begin{align}
\label{eq:finite_frame_fgp_equiv}
   &S: \calE \to \calB^n, &&R: \calB^n \to \calE, \\
   &S(e) = \big( ( v_j\mid e)_\calB \big)_{j=1}^n, 
   &&R \big( (b_j)_{j=1}^n \big) = \sum_{j=1}^n v_j\cdot b_j,
\end{align}
that restrict to isomorphisms $S:\calE\to p\calB^n$ and $R:p\calB^{n}\to \calE$. In particular 
$$
p=(v_i\mid v_j)_{\calB}\in M_{n}(\calB),\quad R\circ S=\mathrm{Id}_{\calE},\quad e= R\circ S(e) = \sum_{j=1}^n v_j\cdot (v_j\mid e)_{\calB} = \sum_{j=1}^n \Theta_{v_j,v_j}^R(e).
$$
Similar formulas hold for left modules with a finite frame.

\begin{defn}
Let $A$ and $B$ be $C^{*}$-algebras. An $A$-$B$ Morita equivalence bimodule is a 
full right-$B$ $C^*$-module and full left-$A$ $C^*$-module 
${}_A E_B$ such that 
\begin{align*}
  &(a \cdot e_1 \mid e_2)_B = (e_1 \mid a^*\cdot e_2)_B, 
  &&{}_A( e_1 \mid e_2 \cdot b) = {}_A(e_1 \cdot b^* \mid e_2), 
  &&{}_A(e_1\mid e_2)\cdot e_3 = e_1 \cdot (e_2 \mid e_3)_B
\end{align*}
for all $a\in A$, $b\in B$ and $e_1,e_2,e_3\in E$. We say that $A$ and $B$ are Morita equivalent 
if there is a Morita equivalence bimodule ${}_A E_B$.
\end{defn}

To distinguish left and right inner products, for $e_1,\,e_2\in E$ we use the notation
\begin{align*}
  &\Theta_{e_1,e_2}^L(e_3) = {}_{A}(e_3\mid e_1)\cdot e_2, 
  &&\Theta_{e_1,e_2}^R(e_3) = e_1 \cdot (e_2\mid e_3)_B.
\end{align*}
A full right-$B$ $C^*$-module is a Morita equivalence bimodule between $\mathbb{K}_B(E)$ and 
$B$ with the $\mathbb{K}_B(E)$-valued inner product 
${}_{\mathbb{K}(E)}(e_1\mid e_2) = \Theta^R_{e_1,e_2}$. Hence 
$A$ is Morita equivalent to $B$ if and only if there is a full right-$B$ $C^*$-module 
$E_B$ and a $\ast$-isomorphism $\phi: A \to \mathbb{K}_B(E)$.

\begin{defn} \label{def:pre_MEB}
Let $\calA$ and $\calB$ be dense $*$-subalgebras of $C^{*}$-algebras $A$ and $B$. A 
pre-Morita equivalence bimodule is an $\calA$-$\calB$ bimodule $_\calA\calE_\calB$, equipped with a 
full left-$\calA$ valued and a full right-$\calB$ valued inner product such that for any $a\in\calA$, $b\in \calB$ 
and $e,e_1,e_2,e_3 \in \calE$ the compatibility conditions
\begin{align*}
  &{}_{\calA}( e\cdot b\mid e\cdot b) \leq \|b\|^2 {}_{\calA}(e\mid e), 
  &&(a \cdot e \mid a\cdot e)_\calB \leq \|a\|^2 (e\mid e)_\calB, 
  &&{}_{\calA}( e_1 \mid e_2) \cdot e_3 = e_1 \cdot (e_2 \mid e_3)_\calB,
\end{align*}
are satisfied. Here $\|\cdot\|$ denotes the $C^{*}$-norm on the algebras $\calA$ and $\calB$, and the inequalities are in the $C^{*}$-algebras $A$ and $B$. 
\end{defn}

As expected, a pre-Morita equivalence 
bimodule ${}_{\calA}\calE_\calB$ can be completed to a Morita equivalence bimodule 
${}_A E_B$, using either of the norms $\|e\|^{2}:=\|(e\mid e)_{\calB}\|_{B}$ or $\|e\|^{2}:=\|{}_{\calA} (e\mid e)\|_{A}$ see~\cite[Section 3.1]{RaeburnWilliams}. 
We proceed with some definitions and results concerning frames in dense submodules of $C^{*}$-modules. For this we need our dense $*$-subalgebras to be equipped with additional analytic structure.

We provide a definition of smooth subalgebra of a $C^{*}$-algebra (see for instance \cite[Definitions 3.25 and 3.26]{ElementsNCG}) that is flexible enough for our purposes.
\begin{defn}
\label{def:smooth_subalgebra}
We say that a $\ast$-algebra $\calA$ is a pre-$C^*$-algebra if it is
\begin{enumerate}
\item[(i)] Fr\'echet, i.e. complete and metrizable such that the multiplication is jointly continuous;
\item[(ii)] Isomorphic to a proper dense $\ast$-subalgebra $\iota(\calA)$ of a $C^*$-algebra $A$, 
where $\iota:\calA\hookrightarrow A$ is the inclusion map, and $\iota(\calA)$ is stable under the 
holomorphic functional calculus. That is, if $f$ is a holomorphic function on a neighbourhood 
of the spectrum of $a\in\iota(\calA)$, then $f(a)\in\iota(\calA)$.
\end{enumerate}
\end{defn}

Stability under the holomorphic functional calculus extends to nonunital algebras, 
since the spectrum of an element in a nonunital algebra is defined to be the spectrum 
of this element in the one-point unitization, though we must restrict to functions 
satisfying $f(0) = 0$. Similarly, the definition of a Fr\'{e}chet algebra does not require a unit.

The $K$-theory groups $K_0(\calA)$ and $K_1(\calA)$ can be defined for 
a pre-$C^*$-algebra $\calA$, see~\cite[Section 3.8]{ElementsNCG} or~\cite{Blackadar} for example. A useful feature of pre-$C^*$-algebras is 
that they contain all the $K$-theoretic information of their $C^*$-completion.

\begin{prop}[\cite{Schweitzer92}] \label{prop:smooth_algebras_give_k_theory_iso}
If $\calA$ is a pre-$C^*$-algebra with $C^*$-completion $A$, then the map induced by the 
inclusion $\iota_\ast: K_j(\calA) \to K_j(A)$ is an isomorphism for $j=0,1$.
\end{prop}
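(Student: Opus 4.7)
The plan is to exploit spectral invariance (stability under holomorphic functional calculus) of $\calA$ inside $A$ as the mechanism that forces the $K$-theoretic data on the two sides to coincide. The proof treats $K_0$ and $K_1$ in parallel by the usual density-plus-functional-calculus technique, and first lifts the analytic setup from $\calA \subset A$ to the matricial setting $M_n(\calA) \subset M_n(A)$.

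First I would verify that $M_n(\calA)$ is again a pre-$C^*$-algebra with $C^*$-completion $M_n(A)$. Fr\'echetness and density are straightforward componentwise. For stability under holomorphic functional calculus, the key point is that invertibility in $M_n(\calA)$ coincides with invertibility in $M_n(A)$: if $T \in M_n(\calA)$ is invertible in $M_n(A)$, then $(\mathrm{Id} - \lambda T)^{-1}$ lies in $M_n(\calA)$ for $\lambda$ in a neighbourhood of $0$ by spectral invariance applied to the entries via a Cauchy-type integral, and hence $\sigma_{M_n(\calA)}(T) = \sigma_{M_n(A)}(T)$. This reduces the theorem to comparing idempotents in $M_\infty(\calA)$ with those in $M_\infty(A)$ and invertibles in $GL_\infty(\calA^+)$ with those in $GL_\infty(A^+)$.

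Next I would establish surjectivity of $\iota_*$. For $K_0$: given an idempotent $p \in M_n(A)$, density yields $a \in M_n(\calA)$ with $\|a - p\| < \tfrac{1}{4}$, forcing $\sigma(a) \subset U_0 \sqcup U_1$ for small neighbourhoods of $0$ and $1$. The function $\chi$ equal to $0$ on $U_0$ and $1$ on $U_1$ is holomorphic on a neighbourhood of $\sigma(a)$, so $e := \chi(a) \in M_n(\calA)$ is an idempotent close to $p$; close idempotents in a Banach algebra are conjugate, hence $[e] = [p]$ in $K_0(A)$. For $K_1$: an invertible $u \in GL_n(A^+)$ is approximated by $v \in M_n(\calA^+)$; openness of the invertible set plus spectral invariance give $v \in GL_n(\calA^+)$, and the straight-line path from $u$ to $v$ lies in $GL_n(A^+)$ so $[u] = [v]$ in $K_1(A)$.

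The main obstacle is injectivity, which requires lifting equivalences from $A$ to $\calA$. Suppose $p, q \in M_n(\calA)$ are idempotents with $[p]=[q]$ in $K_0(A)$; after stabilisation there exists $u \in GL_N(A^+)$ with $u(p\oplus 0_k)u^{-1} = q \oplus 0_k$. Approximate $u$ by $v \in GL_N(\calA^+)$ (using surjectivity on $K_1$ above together with spectral invariance). Then $v(p \oplus 0_k) v^{-1}$ is an idempotent in $M_N(\calA)$ very close to $q \oplus 0_k$, so the standard Kaplansky formula
\[
w := (e_1 e_2 + (\mathrm{Id} - e_1)(\mathrm{Id} - e_2))\bigl(\mathrm{Id} - (e_1 - e_2)^2\bigr)^{-1/2}
\]
applied to $e_1 = v(p\oplus 0)v^{-1}$ and $e_2 = q\oplus 0$ produces an invertible $w$ conjugating $e_1$ to $e_2$, with the square root constructed through holomorphic functional calculus around $1$ and therefore living in $GL_N(\calA^+)$. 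Composing with $v$ gives an element of $GL_N(\calA^+)$ realising the equivalence. The $K_1$ injectivity is analogous: a null-homotopy of $u \in GL_n(\calA^+)$ in $GL_\infty(A^+)$ is approximated on a fine subdivision of $[0,1]$ by piecewise $GL_\infty(\calA^+)$-valued elements which are then glued via the same spectrally invariant Kaplansky-type correction. This controlled lifting of equivalences and homotopies across $\iota$ is precisely the content of Schweitzer's theorem and is the heart of the argument.
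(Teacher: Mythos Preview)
The paper does not supply a proof of this proposition; it is quoted from \cite{Schweitzer92} and used as a black box. Your overall strategy (density plus holomorphic functional calculus to match idempotents and invertibles) is the standard one, and the surjectivity and injectivity arguments you sketch for $K_0$ and $K_1$ are essentially correct \emph{once} you know that $M_n(\calA)$ is again stable under holomorphic functional calculus inside $M_n(A)$.

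That matricial stability, however, is exactly the content of Schweitzer's theorem, and your justification of it is where the argument breaks down. You claim that if $T\in M_n(\calA)$ is invertible in $M_n(A)$ then $(\mathrm{Id}-\lambda T)^{-1}\in M_n(\calA)$ for small $\lambda$ ``by spectral invariance applied to the entries via a Cauchy-type integral''. This does not work: the entries of $(\mathrm{Id}-\lambda T)^{-1}$ are not holomorphic functions of individual entries of $T$, so scalar spectral invariance of $\calA$ in $A$ cannot be applied entrywise; and even if it could, knowing the resolvent for small $\lambda$ does not by itself yield $T^{-1}$. The genuine proof uses the Fr\'echet hypothesis in an essential way: one first shows that holomorphic stability forces the group of invertibles of $\calA^{+}$ to be open in the Fr\'echet topology (via a Neumann-series argument that converges for the Fr\'echet seminorms), and then that openness of the invertibles passes to $M_n(\calA^{+})$ by a block or row-reduction argument. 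Without the Fr\'echet completeness this can fail, which is why Definition~\ref{def:smooth_subalgebra} insists on it. Once that lemma is established, the rest of your outline (approximation of idempotents, the Kaplansky similarity, subdivision of homotopies) is the usual Karoubi density argument and goes through.
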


\begin{lemma}
\label{lem: finite_frame}
 Let $\calA$ and $\calB$ be pre-$C^{*}$-algebras with $\calB$ unital and ${}_{\calA}\calE_\calB$ a pre-Morita equivalence bimodule. 
 \begin{enumerate}
 \item There is a finite left module 
frame $\{g_1,\ldots, g_n\} \subset  \calE$ and $1_{\calB}=\sum_{k=1}^{n} (g_{k}\mid g_{k})_{\calB}$.
\item For any $p=p^{*}=p^{2}\in M_{n}(\mathcal{A})$, $p\calE^{\oplus n}$ is a 
finitely generated and projective $\calB$-module and there exists a finite right module frame 
$\{v_{1},\cdots, v_{m}\}\subset p\calE^{\oplus n}$. 
\end{enumerate}
\end{lemma}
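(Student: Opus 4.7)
The plan is to execute both parts via a common ``approximate-and-normalize'' strategy. I will use fullness of the pre-Morita equivalence bimodule ${}_\calA\calE_\calB$ to approximate the relevant identity ($1_\calB$ in (1), $p$ in (2)) in $C^*$-norm, symmetrize with an $(e+f)$ trick to obtain a positive element bounded below by that identity, and then extract an inverse square root via holomorphic functional calculus inside the pertinent pre-$C^*$-algebra to produce the exact frame identity.

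For (1), I would first use fullness on the right to find $e_k, f_k \in \calE$ and $\lambda_k \in \C$ such that $T := \sum_k (e_k \mid \lambda_k f_k)_\calB$ satisfies $\|1_\calB - T\| < 1/2$. Setting $\tilde g_k := e_k + \lambda_k f_k \in \calE$, a direct expansion gives
\[
\sum_k (\tilde g_k \mid \tilde g_k)_\calB \;=\; \sum_k\bigl[(e_k \mid e_k)_\calB + (\lambda_k f_k \mid \lambda_k f_k)_\calB\bigr] + T + T^* \;\geq\; T + T^* \;\geq\; 1_\calB,
\]
so $S := \sum_k(\tilde g_k \mid \tilde g_k)_\calB \in \calB$ is positive and invertible in $B$. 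By Definition~\ref{def:smooth_subalgebra}(ii), the inverse square root $S^{-1/2}$ lies in $\calB$, so setting $g_k := \tilde g_k \cdot S^{-1/2} \in \calE$ I obtain $\sum_k (g_k \mid g_k)_\calB = 1_\calB$. The compatibility identity of Definition~\ref{def:pre_MEB} then yields $\sum_k \Theta^L_{g_k, g_k}(e) = e \cdot 1_\calB = e$, so $\{g_1, \ldots, g_n\}$ is a finite left module frame.

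For (2), the corner $pM_n(\calA)p$ is a unital pre-$C^*$-algebra with identity $p$, inheriting stability under holomorphic functional calculus from $M_n(\calA)$ via the observation that $f(x) = pf(x)p$ whenever $x = pxp$. Since $p\cdot {}_{M_n(\calA)}(\xi \mid \eta)\cdot p = {}_{M_n(\calA)}(p\xi \mid p\eta)$ for $\xi, \eta \in \calE^{\oplus n}$, the left inner products of elements of $p\calE^{\oplus n}$ densely span $pM_n(\calA)p$. Running the argument of (1) verbatim with $(\calB, 1_\calB)$ replaced by $(pM_n(\calA)p, p)$ and with the roles of left and right inner products swapped, I produce $\tilde v_k \in p\calE^{\oplus n}$ with $S := \sum_k {}_{M_n(\calA)}(\tilde v_k \mid \tilde v_k) \geq p$, hence invertible in the corner, and then $v_k := S^{-1/2}\tilde v_k \in p\calE^{\oplus n}$ satisfying $\sum_k {}_{M_n(\calA)}(v_k \mid v_k) = p$. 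Compatibility gives $\sum_k v_k (v_k \mid \xi)_\calB = p\xi = \xi$ for all $\xi \in p\calE^{\oplus n}$, so $\{v_1, \ldots, v_m\}$ is a finite right module frame; the isomorphisms of~\eqref{eq:finite_frame_fgp_equiv} then identify $p\calE^{\oplus n} \cong Q\calB^m$ with $Q = ((v_i \mid v_j)_\calB) \in M_m(\calB)$, giving finite generation and projectivity over $\calB$.

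I expect the main obstacle to be the passage from the $C^*$-norm approximate equalities delivered by fullness to the exact algebraic frame identities demanded by the definition of a module frame. The $(e+f)$ symmetrization is the crucial device here: a scalar-linear approximation of the target identity need not be self-adjoint, let alone positive, and without this step one cannot meaningfully invert. Once an exact positive lower bound is secured, stability of the dense subalgebra under the holomorphic functional calculus keeps the inverse square root in the subalgebra, and the rest is bookkeeping with the compatibility between the two inner products.
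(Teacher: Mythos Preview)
Your proof is correct and follows the same approximate-then-normalize strategy as the paper: approximate the relevant unit by inner products from $\calE$, obtain a positive invertible element, and pull out an inverse square root via holomorphic functional calculus in the pre-$C^*$-algebra. The differences are minor but worth noting. For (1), the paper simply asserts one can approximate $1_\calB$ directly by a sum $\sum_k (e_k\mid e_k)_\calB$ (a standard $C^*$-module fact), whereas your $(e+f)$ symmetrization makes this step self-contained and explicit. For (2), the paper works with right rank-one operators $\Theta^R_{w_k,w_k}$ via the identification $\mathbb{K}_B(pE^{\oplus n})\cong pM_n(A)p$, while you work with left inner products landing in $pM_n(\calA)p$; by the compatibility ${}_\calA(e_1\mid e_2)\cdot e_3=e_1\cdot(e_2\mid e_3)_\calB$ these are the same operators seen from opposite sides of the bimodule. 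One point the paper treats more carefully: to apply holomorphic functional calculus to $S\in pM_n(\calA)p$ you must work in $M_n(\calA)$, where the spectrum of $S$ picks up $\{0\}$; the paper explicitly chooses $f$ with $f(0)=0$ and $f(z)=z^{-1/2}$ on the remaining disconnected component, whereas your justification ``$f(x)=pf(x)p$ whenever $x=pxp$'' presupposes this $f(0)=0$ extension without saying so.
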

\begin{proof}
We write $A$ and $B$ for the $C^{*}$-algebra closures of $\calA$ and $\calB$ and $E$ for the $C^{*}$-module closure of $\mathcal{E}$, which is a Morita equivalence bimodule for the $C^{*}$-algebras $A$ and $B$. 

(1) The $*$-algebra of  finite rank operators $\textnormal{span}\{\Theta^L_{e_1,e_2}\}_{e_1,e_2\in \calE}$ 
is $C^{*}$-norm dense in ${}_{A}\mathbb{K}(E)$. Thus for $\varepsilon < 1$ 
we can find $\{e_1,\cdots , e_{n}\}\subset \calE$ such that the operator 
\[g:=\sum_{k=1}^{n} \Theta^{L}_{e_{k},e_{k}}=\sum_{k=1}^{n} (e_{k}\mid e_{k})_{\calB}\in \calB,\] 
satisfies $\|1_{B}-g\|_{B}<\varepsilon$. Hence the positive element $g$ is invertible in $B$. 
Because the spectrum of $g$ is contained in the region of analycity of $f(z) = z^{-1}$ 
and $\calB$ is stable under the holomorphic functional calculus, $g^{-1}$ and $g^{-1/2} \in \calB$. 
Define $g_{k}:=e_{k}\cdot g^{-\frac{1}{2}}\in\calE$, for $1,\cdots, n$ so that
 \[\sum_{k=1}^{n}(g_{k}\mid g_{k})_{\calB}=\sum_{k=1}^{n} \Theta^{L}_{g_{k},g_{k}}=\sum_{k=1}^{n} \Theta^{L}_{e_{k}g^{-\frac{1}{2}},e_{k}g^{-\frac{1}{2}}}=g^{-\frac{1}{2}}\left(\sum_{k=1}^{n} \Theta^{L}_{e_{k},e_{k}} \right)g^{-\frac{1}{2}}=g^{-\frac{1}{2}}gg^{-\frac{1}{2}}=1_{\calB},\]
 which proves the claim.

 (2) Since $p$ is a compact operator on $E^{\oplus n}$, it is finite rank 
 by~\cite[Corollary 3.10]{ElementsNCG}, so the module $W:=pE^{\oplus n}$ is 
 finitely generated and projective over $B$. Write $\mathcal{W}:=p\calE^{\oplus n}$ 
 for the dense $\calB$-submodule defined by $p\in M_{n}(\calA)$. 
 
The $*$-algebra of  finite rank operators 
$\textnormal{span}\{\Theta^R_{e_1,e_2}\}_{e_1,e_2\in \calW}$ is $C^{*}$-norm dense 
in $\mathbb{K}_{B}(W)$.  Hence for $\varepsilon< 1$ there exist 
$w_{1},\cdots, w_{m}\in \calW$ such that $w:=\sum \Theta^{R}_{w_{k},w_{k}}$ 
satisfies $\|p-w\|_{\mathbb{K}_{B}(W)}<\varepsilon$. It follows that $w$ is 
invertible in the unital $C^{*}$-algebra $\mathbb{K}_{B}(W)\cong p M_{n}(A)p$ 
and has spectrum contained in $B(1;\varepsilon)$. The spectrum of $w$ in $M_n(A)$ 
is thus contained in the disconnected set $B(1;\varepsilon)\cup \{0\}$. 
By spectral invariance, the same holds for the spectrum of $w\in M_{n}(\mathcal{A})$.  
Thus there is a function $f$, holomorphic on a neighborhood of the spectrum of $w$ 
such that $f(0)=0$ and $f(z)=z^{-\frac{1}{2}}$ for $z\in B(1;\varepsilon)$. 
Hence $f(w)\in M_{n}(\mathcal{A})\cap pM_{n}(A)p=pM_{n}(\calA)p$ and satisfies 
$f(w)wf(w)=p$. Now put $v_{k}:=f(w)w_{k}$ so that, as above, 
\[\sum_{k=1}^{m}\Theta^{R}_{v_{k},v_{k}}=f(w)\left(\sum_{k=1}^{m}\Theta^{R}_{w_{k},w_{k}}\right)f(w)=p,\]
which proves the claim.
\end{proof}

\subsection{Derivations, pre-Morita equivalence bimodules and localisation}  \label{subsec:derivations_and_bimodules}
We now describe a general method to construct pre-Morita equivalence bimodules and 
pre-$C^{*}$-algebras from a family of densely defined derivations on a given $C^{*}$-algebra. 
For instance, the algebra $C^{\infty}(M)$, with $M$ a compact manifold, can be constructed in this way.

As a technical tool we will use the notion of differential seminorms introduced in \cite[Definition 3.1]{BC91}. 
The space $\ell^{1}(\mathbb{N})$ is an algebra in the convolution product
$f*g(n):=\sum_{k\leq n}f(k)g(n-k).$
The subspace $\ell^{1}_{+}(\mathbb{N}):=\ell^{1}(\mathbb{N},\mathbb{R}_{+})\subset\ell^{1}(\mathbb{N})$ 
inherits the coordinatewise ordering from $\mathbb{R}_{+}$ and satisfies 
$\ell^{1}_{+}(\mathbb{N})\ell^{1}_{+}(\mathbb{N})\subset \ell^{1}_{+}(\mathbb{N})$. 
Following \cite{BC91}, a differential seminorm on a subalgebra $\mathfrak{A}\subset A$ 
is a map $T:\mathfrak{A}\to \ell^{1}_{+}(\mathbb{N})$ such that $T(a)(0)\leq C\|a\|$ 
and $T(\lambda a)=|\lambda|T(a)$ and $T(ab)\leq T(a)*T(b)$.  The functional
$\int:\ell^{1}(\mathbb{N})\to \mathbb{C}$, $f\mapsto \sum_{k\in\mathbb{N}} f(k)$,
is positive and multiplicative, and if $T:\mathfrak{A}\to \ell^{1}_{+}(\mathbb{N})$ 
is a differential seminorm, then $\int T:\mathfrak{A}\to \mathbb{R}_{+}$ is a submulitiplicative seminorm.
\begin{prop}[Cf.~\cite{BC91}]
\label{prop: apply_BC}
Let $A$ be a $C^{*}$-algebra with norm $\|\cdot \|$, $\mathfrak{A}\subset A$ a dense 
$*$-subalgebra and $\{\partial_{j}:\mathfrak{A}\to \mathfrak{A}\}_{j=1}^{d}$ 
a finite family of commuting $*$-derivations. Then for $\alpha \in \mathbb{N}^d$ a multi-index,
$$
   \|a\|_n = \sum_{|\alpha|\leq n}\frac{\big\| \partial^\alpha a \big\|}{\alpha!}, 
    \qquad  \partial^\alpha = \partial_1^{\alpha_1}\cdots \partial_d^{\alpha_d},
     \quad|\alpha|:=\sum_{k=1}^{n}\alpha_{k},\quad  \alpha!:=\prod_{k=1}^{d}(\alpha_{k}!),
$$
is a sequence of submutliplicative seminorms on $\mathfrak{A}$. 
Let $\mathcal{A}_{n}$ denote the closure of $\mathfrak{A}$ 
in the seminorms $\|\cdot \|_{k}$, $k\leq n$ and 
$\A_{\infty}:=\varprojlim \mathcal{A}_{n}$ the Fr\'echet closure of $\mathfrak{A}$ 
in all the seminorms $\|\cdot \|_{n}$. Then for $n=0,\cdots,\infty$, $\A_{n}$ is a pre-$C^{*}$-algebra.
\end{prop}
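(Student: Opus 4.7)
The plan is to verify submultiplicativity of the seminorms $\|\cdot\|_{n}$ directly from the multi-index Leibniz rule, and then to deduce the pre-$C^{*}$-algebra property by invoking the differential-seminorm machinery of \cite{BC91}.

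\emph{Step 1 (differential seminorm).} I would define $T:\mathfrak{A}\to \ell^{1}_{+}(\mathbb{N})$ by $T(a)(k) := \sum_{|\alpha|=k} \|\partial^{\alpha} a\|/\alpha!$. Because the $\partial_{j}$ commute pairwise, the multi-index Leibniz rule gives $\partial^{\alpha}(ab) = \sum_{\beta\leq\alpha}\binom{\alpha}{\beta}\partial^{\beta} a\cdot \partial^{\alpha-\beta} b$; dividing by $\alpha!$ (using $\binom{\alpha}{\beta}/\alpha! = 1/(\beta!(\alpha-\beta)!)$) and summing over $|\alpha|=k$ yields
\[
T(ab)(k) \,\leq\, \sum_{j=0}^{k} T(a)(j)\, T(b)(k-j) \,=\, (T(a)*T(b))(k).
\]
Together with $T(a)(0)=\|a\|$ and $T(\lambda a)=|\lambda|T(a)$, this shows $T$ is a differential seminorm in the sense of \cite{BC91}.

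\emph{Step 2 (Fr\'echet algebra structure).} Since $\|a\|_{n} = \sum_{k\leq n} T(a)(k)$, the estimate of Step 1 gives
\[
\|ab\|_{n} \,\leq\, \sum_{j+l\leq n} T(a)(j)\, T(b)(l) \,\leq\, \|a\|_{n}\, \|b\|_{n},
\]
so each $\|\cdot\|_{n}$ is submultiplicative. As $\|\cdot\|_{0}=\|\cdot\|$ is the $C^{*}$-norm, $\|\cdot\|_{n}$ is in fact a norm, so $\mathcal{A}_{n}$ is a Banach algebra injecting continuously into $A$, and $\mathcal{A}_{\infty}=\varprojlim \mathcal{A}_{n}$ is Fr\'echet. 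The inequality $\|\partial_{j}a\|_{n-1}\leq \|a\|_{n}$ shows that each $\partial_{j}$ extends by continuity to a bounded $*$-derivation $\mathcal{A}_{n}\to \mathcal{A}_{n-1}$, and these extensions still commute. Stability of the involution is automatic from $\|a^{*}\|_{n}=\|a\|_{n}$, which uses that the $\partial_{j}$ are $*$-derivations.

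\emph{Step 3 (holomorphic functional calculus).} The remaining content is spectral invariance of $\mathcal{A}_{n}\hookrightarrow A$, i.e.\ that $a\in \mathcal{A}_{n}$ invertible in $A$ implies $a^{-1}\in \mathcal{A}_{n}$. Starting from $\partial_{j}(a^{-1}) = -a^{-1}(\partial_{j}a) a^{-1}$ and iterating, one produces $\partial^{\alpha}(a^{-1})$ for $|\alpha|\leq n$ as a finite sum of products involving $a^{-1}\in A$ and $\partial^{\beta}a$ with $|\beta|\leq |\alpha|$, which yields a finite bound on $\|a^{-1}\|_{n}$. This is precisely the conclusion of the general differential-seminorm results \cite[Section 3]{BC91}, applied to the partial sums $\sum_{k\leq n} T(a)(k)$: the completion of $\mathfrak{A}$ in this seminorm is stable under holomorphic functional calculus. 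For $\mathcal{A}_{\infty}=\bigcap_{n}\mathcal{A}_{n}$, if $a\in\mathcal{A}_{\infty}$ and $f$ is holomorphic on a neighbourhood of $\mathrm{spec}_{A}(a)$, then $f(a)\in \mathcal{A}_{n}$ for every $n$, hence $f(a)\in\mathcal{A}_{\infty}$.

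\emph{Main obstacle.} The delicate point is the inductive inversion argument of Step 3: justifying that $\partial_{j}(a^{-1})=-a^{-1}(\partial_{j}a) a^{-1}$ continues to hold for $a\in\mathcal{A}_{n}$ rather than only $a\in \mathfrak{A}$, and managing the combinatorics of higher-order derivatives of $a^{-1}$. The virtue of the differential-seminorm framework \cite{BC91} is that exactly this bookkeeping is packaged into the single $\ell^{1}$-valued inequality $T(ab)\leq T(a)*T(b)$ verified in Step 1.
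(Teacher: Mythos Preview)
Your approach is the same as the paper's---define an $\ell^{1}_{+}$-valued map from the graded pieces $\sum_{|\alpha|=k}\|\partial^{\alpha}a\|/\alpha!$, verify the convolution inequality via the multi-index Leibniz rule, and invoke Blackadar--Cuntz for holomorphic stability---but there is a gap in Step~3. The inequality $T(ab)\leq T(a)*T(b)$ alone does \emph{not} suffice to apply the holomorphic-functional-calculus results of \cite{BC91}; one also needs that $T_{n}$ has \emph{logarithmic order} $\leq 1$ in the sense of \cite[Definition~3.4]{BC91}. The paper verifies this explicitly: for $k\geq 1$ one splits the sum over $|\alpha|+|\beta|=k$ into the terms with $\beta=0$, with $\alpha=0$, and with both $|\alpha|,|\beta|\geq 1$, which exhibits the required Leibniz-type control. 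Only then can \cite[Propositions~3.3 and~3.12]{BC91} be invoked. Your closing claim that ``exactly this bookkeeping is packaged into the single $\ell^{1}$-valued inequality $T(ab)\leq T(a)*T(b)$'' overstates what that inequality buys; the logarithmic-order estimate is a separate (if easy) check, and without it the citation to \cite{BC91} is incomplete.

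A minor secondary point: your $T$ is not truncated, so $T(a)$ need not lie in $\ell^{1}_{+}(\mathbb{N})$ for arbitrary $a\in\mathfrak{A}$. The paper works with $T_{n}(a)(k)=0$ for $k>n$, which is automatically $\ell^{1}$; this is in effect what you use anyway when passing to $\|a\|_{n}=\sum_{k\leq n}T(a)(k)$, but it should be said.
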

\begin{proof}
Defining 
$\begin{pmatrix}\alpha\\ \beta \end{pmatrix}:=\prod_{k=1}^{d}\begin{pmatrix}\alpha_{k}\\\beta_{k}\end{pmatrix}$, 
one proves that 
$\partial^{\alpha}(ab)=\sum_{\beta\leq\alpha}\begin{pmatrix}\alpha\\ \beta \end{pmatrix}\partial^{\beta}(a)\partial^{\alpha-\beta}(b),$
by induction on $\alpha$. Following \cite{BC91}, for $n\in\mathbb{N}$ we consider the maps
\[T_{n}:\mathfrak{A}\to \ell^{1}_{+}(\mathbb{N}),\qquad 
T_{n}(a)(k):=\frac{\sum_{|\alpha|=k}\|\partial^{\alpha}a\|}{\alpha!} \, \text{ for }\, k\leq n, \quad 
T_{n}(a)(k)=0 \,\text{ for }\,  k>n. \]
Indeed the $T_{n}$ satisfy $T_{n}(a)(0)=\|a\|$ and $T_{n}(\lambda a)=|\lambda|T_{n}(a)$ as well as
\begin{align*}
T_{n}(ab)(k)&=\sum_{|\alpha|=k}\frac{1}{\alpha!}\big\|\partial^{\alpha}(ab)\big\|
\leq \sum_{|\alpha|= k}\sum_{\beta\leq \alpha} \frac{1}{\beta!}\big\|\partial^{\beta}(a)\big\|
  \frac{1}{(\alpha-\beta)!}\big\|\partial^{\alpha-\beta}(b)\big\| \\
&\leq \sum_{|\alpha|+|\beta|=k} \frac{1}{\beta!}\big\|\partial^{\beta}(a)\big\| \, \frac{1}{\alpha!}\big\|\partial^{\alpha}(b)\big\|
\leq T_{n}(a)*T_{n}(b)(k),
\end{align*} 
which shows that $T_{n}$ is a differential algebra norm. Moreover for $k\geq 1$ 
\begin{align*}
T_{n}(ab)(k)&\leq \sum_{|\alpha|+|\beta|=k} \frac{1}{\beta!}\big\|\partial^{\beta}(a)\big\| \, 
   \frac{1}{\alpha!}\big\|\partial^{\alpha}(b)\big\|\\
&=\|a\|\Big(\sum_{|\beta|=k}\frac{1}{\beta!} \big\|\partial^{\beta}(b) \big\|\Big)+
  \Big(\sum_{|\alpha|=k}\frac{1}{\alpha!}\big\|\partial^{\alpha}(a)\big\|\Big)\|b\|+
\sum_{ |\alpha|,|\beta|\geq 1} \frac{1}{\beta!}\big\|\partial^{\beta}(a)\big\| \, \frac{1}{\alpha!}\big\|\partial^{\alpha}(b)\big\|,
\end{align*}
so that each $T_{n}$ is of logarthmic order $\leq 1$ (see \cite[Definition 3.4]{BC91}). 
We then have $\|a\|_{n}=\int T_{n}(a)$ and the result for $\calA_n$ follows 
from \cite[Propositions 3.3 and 3.12]{BC91}. The result for $\A_{\infty}$ then follows 
since $\A_{\infty}=\bigcap_{n=0}^{\infty} \calA_{n}$ and the result holds for each $\calA_{n}$.
\end{proof}

Now let $\mathfrak{A}\subset A$ and $\mathfrak{B}\subset B$ be dense $*$-subalgebras and 
$\mathfrak{E}$ an $\mathfrak{A}$-$\mathfrak{B}$ pre-Morita equivalence bimodule. 
Suppose we are given a families  of commuting $*$-derivations 
\[ 
\{\partial_j^{L}\}_{j=1}^d:\mathfrak{A}\to\mathfrak{A}, \qquad 
\{\partial_j^{R}\}_{j=1}^d:\mathfrak{B}\to \mathfrak{B},
\] 
as well as a commuting family of operators $\nabla_{j}:\mathfrak{E}\to \mathfrak{E}$ such that for each 
$j$ and all $a\in \mathfrak{A},\xi,\eta\in \mathfrak{E}$ and $b\in \mathfrak{B}$ we have
\begin{align}
  \nabla_{j}(a\cdot \xi \cdot b)&=\partial^{L}_{j} (a)\cdot \xi\cdot b+a\cdot \nabla_{j}(\xi)\cdot b + a\cdot \xi\cdot\partial^{R}_{j}(b),    \label{eq: bimodder}   \\
   \partial^{R}_{j}(\xi\mid\eta)_{\mathfrak{B}}&=(\xi\mid\nabla_{j}(\eta))_{\mathfrak{B}}-(\nabla_{j}(\xi)\mid\eta)_{\mathfrak{B}}.   \label{eq: leftrighther}
\end{align}
It is worth noting that the identity
\begin{align}
\label{eq: lefther}
\partial^{L}_{j}{}_{\mathfrak{A}}(\xi\mid\eta)={}_{\mathfrak{A}}(\xi\mid\nabla_{j}(\eta))-{}_{\mathfrak{A}}(\nabla_{j}(\xi)\mid \eta),
\end{align}
is satisfied as well. By using Equations \eqref{eq: bimodder}, \eqref{eq: leftrighther} 
and the compatibility of left and right inner products, we find for $\xi,\eta,e\in\mathcal{E}$:
\begin{align*}
{}_{\mathfrak{A}}(\xi\mid\nabla_{j}(\eta))\cdot e-{}_{\mathfrak{A}}(\nabla_{j}(\xi)\mid \eta)\cdot e &=\xi{}\cdot (\nabla_{j}(\eta)\mid e)_{\mathfrak{B}}-\nabla_{j}(\xi)\cdot (\eta\mid e)_{\mathfrak{B}}\\
&=\xi{}\cdot (\eta\mid \nabla_{j}(e))_{\mathfrak{B}}-\xi\cdot \partial^{R}_{j}(\eta\mid e)-\nabla_{j}(\xi)\cdot (\eta\mid e)_{\mathfrak{B}}\\
&=\xi{}\cdot (\eta\mid \nabla_{j}(e))_{\mathfrak{B}}-\nabla_{j}(\xi\cdot (\eta\mid e)_{\mathfrak{B}})\\
&={}_{\mathfrak{A}}(\xi\mid \eta)\cdot \nabla_{j}(e)-\nabla_{j}(_{\mathfrak{A}}(\xi\mid \eta)\cdot e))\\
&=\partial^{L}_{j}{}_{\mathfrak{A}}(\xi\mid\eta)\cdot e,
\end{align*}
so that \eqref{eq: lefther} follows.

We write $\calA_{n},\calB_{n}$ for the pre-$C^{*}$-algebra completions obtained through Proposition \ref{prop: apply_BC}.
Similarly we write
\[
\nabla^{\alpha}:=\nabla_1^{\alpha_1}\cdots \nabla_d^{\alpha_d}, \quad \alpha \in \mathbb{N}^d,\quad \|e\|_{n}:=\sum_{|\alpha| \leq n}\frac{\|\nabla^{\alpha}(e)\|}{\alpha!}
\]
as well as $\mathcal{E}_{n}$ for the completion of $\mathfrak{E}$ in the seminorms 
up to degree $n$, and $\calE=\calE_{\infty}$ for the completion of $\mathfrak{E}$ in all these seminorms. 
\begin{defn} \label{def:HS_localisation}
Suppose that $E_B$ is a right $C^*$-module with $B$ a unital $C^*$-algebra and 
$\omega_B: B\to \C$ a  state. The localisation $\mathfrak{h}_\omega$ 
is the Hilbert space that comes from the 
completion of $E$ in the inner product 
$$
   \langle e_1, e_2 \rangle_\omega =  \omega_B \big( ( e_1 \mid e_2)_B \big).
$$
\end{defn}

\begin{remark}
If ${}_A E_B$ is a Morita equivalence bimodule and the state $\tau: B\to \C$ is 
  a trace, then there is a canonical dual tracial weight $\Tr_\tau$ on $\mathbb{K}_B(E) \cong A$ such that 
    $\Tr_\tau( \Theta_{e_1,e_2}^R) = \tau( (e_2\mid e_1)_B )$ for any $e_1, e_2 \in E$. 
    We can localise ${}_A E_B$ with a left-linear inner-product from $\Tr_\tau$ to obtain the dual 
    localisation space $\frakh_\omega^*$. Hence in this special case, the localisation Hilbert 
    space directly agrees with the Gabor analysis literature. See~\cite{AJL19} for more details.
\end{remark}

Given any state $\omega: B \to \C$, the seminorms $\|\cdot \|_{n}$ induce a family of seminorms 
$\|\cdot \|_{n, \omega}$ on the Hilbert space localisation $\mathfrak{h}_\omega$,
$$
  \| \xi \|_{n, \omega} :=\sum_{|\alpha| \leq n}
     \frac{ \|\nabla^\alpha(\xi) \|_{\frakh_\omega}  }{\alpha!}, \quad n \in \mathbb{N}.
$$

\begin{prop} 
\label{lem:pre-Morita-Frechet-HSpace}
For $n=0,\cdots , \infty$
the space $\calE_{n}$ is a $\calA_{n}$-$\calB_{n}$ pre-Morita equivalence bimodule.
Moreover, for any state $\omega:B\to \mathbb{C}$, 
the image of $\calE_{n}$ in $\frakh_\omega$ is bounded in the seminorms $\| \cdot \|_{k, \omega}$ 
for $0\leq k \leq n$.
\end{prop}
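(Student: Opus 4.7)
The plan is to mirror the strategy of Proposition \ref{prop: apply_BC}: I would use the Leibniz rules \eqref{eq: bimodder}, \eqref{eq: leftrighther} and \eqref{eq: lefther} together with the differential seminorm formalism of \cite{BC91} to show that the module actions and both inner products extend continuously from $\mathfrak{E}$ to $\calE_n$, after which the axioms of Definition \ref{def:pre_MEB} pass to the completion by continuity inside the ambient $C^*$-algebras.

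First I would establish multi-index Leibniz formulas by induction on $|\alpha|$:
\begin{align*}
\nabla^{\alpha}(a\cdot \xi\cdot b) &= \sum_{\beta+\gamma+\delta=\alpha} \binom{\alpha}{\beta,\gamma,\delta}\, (\partial^{L})^{\beta}(a)\cdot \nabla^{\gamma}(\xi)\cdot (\partial^{R})^{\delta}(b),\\
(\partial^{R})^{\alpha}(\xi\mid\eta)_{\mathfrak{B}} &= \sum_{\beta\leq\alpha}\binom{\alpha}{\beta}(-1)^{|\beta|}\big(\nabla^{\beta}\xi \,\big|\, \nabla^{\alpha-\beta}\eta\big)_{\mathfrak{B}},
\end{align*}
and the analogous formula for $(\partial^{L})^{\alpha}\,{}_{\mathfrak{A}}(\xi\mid\eta)$. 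Setting
\[
T_n(\xi)(k):=\sum_{|\alpha|=k}\frac{\|\nabla^{\alpha}\xi\|}{\alpha!} \text{ for } k\leq n, \quad T_n(\xi)(k):=0 \text{ for } k>n,
\]
together with the multinomial identity $\binom{\alpha}{\beta,\gamma,\delta}/\alpha!=1/(\beta!\,\gamma!\,\delta!)$, the above yields the convolution estimates
\[
T_n(a\cdot\xi\cdot b) \leq T_n(a)*T_n(\xi)*T_n(b), \qquad
T_n\big((\xi\mid\eta)_{\mathfrak{B}}\big)\leq T_n(\xi)*T_n(\eta),
\]
with the same bookkeeping as in Proposition \ref{prop: apply_BC}. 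Applying the summation functional $\int$ gives the submultiplicative bounds $\|a\cdot\xi\cdot b\|_{n}\leq \|a\|_{n}\|\xi\|_{n}\|b\|_{n}$ and $\|(\xi\mid\eta)_{\mathfrak{B}}\|_{n}\leq \|\xi\|_{n}\|\eta\|_{n}$, and similarly on the left. Consequently the $\mathfrak{A}$-$\mathfrak{B}$ bimodule actions and both inner products on $\mathfrak{E}$ extend by continuity to a $\calA_n$-$\calB_n$ bimodule structure on $\calE_n$ with $\calA_n$- and $\calB_n$-valued pairings; the case $n=\infty$ follows since $\calE_{\infty}=\bigcap_n \calE_n$.

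For the remaining axioms of Definition \ref{def:pre_MEB}, the two $C^*$-inequalities and the pairing identity ${}_{\calA}(\xi_1\mid\xi_2)\cdot\xi_3=\xi_1\cdot(\xi_2\mid\xi_3)_{\calB}$ already hold in the $C^*$-completion $E$, hence they descend to $\calE_n\subset E$. Fullness is inherited because the spans of inner products from $\mathfrak{E}$ are $C^*$-norm dense in $A$ and $B$ and sit inside $\calA_n$ and $\calB_n$. This proves (1). For (2), given any state $\omega:B\to\mathbb{C}$ and $\xi\in \mathfrak{E}$,
\[
\|\nabla^{\alpha}\xi\|_{\frakh_{\omega}}^{2}=\omega\big((\nabla^{\alpha}\xi\mid \nabla^{\alpha}\xi)_{B}\big)\leq \|\nabla^{\alpha}\xi\|^{2},
\]
so $\|\nabla^{\alpha}\xi\|_{\frakh_{\omega}}\leq \|\nabla^{\alpha}\xi\|$ for every multi-index $\alpha$, and summing over $|\alpha|\leq k\leq n$ gives $\|\xi\|_{k,\omega}\leq \|\xi\|_n$. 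The map $\mathfrak{E}\to\frakh_{\omega}$ therefore extends continuously to $\calE_n$ and the seminorms $\|\cdot\|_{k,\omega}$ remain controlled by $\|\cdot\|_n$ for $0\leq k\leq n$.

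The most delicate step is the algebraic bookkeeping for the multi-index Leibniz rules and the verification that $T_n$ is a differential seminorm of logarithmic order $\leq 1$ on the bimodule side, but these are direct analogues of the computations in the proof of Proposition \ref{prop: apply_BC}, so the analytic content reduces to the Blackadar--Cuntz machinery already invoked there. Everything else is formal: continuous extension from a dense subspace, together with the fact that $\calE_n$ embeds as a subspace of the $C^*$-completion $E$ where the pre-Morita axioms are known.
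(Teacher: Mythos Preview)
Your argument is correct, but it takes a genuinely different route from the paper. The paper avoids repeating the multi-index Leibniz bookkeeping by passing to the \emph{linking algebra}
\[
\mathcal{L}(\mathfrak{E})=\left\{\begin{pmatrix}a & \xi\\ \eta^{*} & b\end{pmatrix}: a\in\mathfrak{A},\ b\in\mathfrak{B},\ \xi,\eta\in\mathfrak{E}\right\},
\]
and observing that the identities \eqref{eq: bimodder}, \eqref{eq: leftrighther}, \eqref{eq: lefther} are precisely what is needed for
\[
\Delta_{j}\begin{pmatrix}a & \xi\\ \eta^{*} & b\end{pmatrix}:=\begin{pmatrix}\partial_{j}^{L}a & \nabla_{j}\xi\\ -\nabla_{j}(\eta)^{*} & \partial_{j}^{R}b\end{pmatrix}
\]
to be a commuting family of $*$-derivations on $\mathcal{L}(\mathfrak{E})$. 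Since both the module actions and the inner products are realised as multiplications inside $\mathcal{L}(\mathfrak{E})$, Proposition~\ref{prop: apply_BC} applied once to the linking algebra yields all the required continuity estimates at a stroke, and the seminorms on $\calE_n$, $\calA_n$, $\calB_n$ are simply the restrictions of the linking-algebra seminorms to the corners.

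Your direct approach works equally well and is arguably more elementary, since it does not require introducing the linking algebra; the trade-off is that you must verify the convolution estimates separately for the bimodule action and for each inner product, whereas the paper handles all four structures uniformly. One small remark: the logarithmic-order~$\leq 1$ condition you flag as delicate is actually irrelevant here --- it was needed in Proposition~\ref{prop: apply_BC} only for spectral invariance of the \emph{algebras} $\calA_n$, $\calB_n$, which is already established. For the bimodule statement you only need the submultiplicative bounds $\|a\cdot\xi\cdot b\|_{n}\leq\|a\|_{n}\|\xi\|_{n}\|b\|_{n}$ and $\|(\xi\mid\eta)_{\calB}\|_{n}\leq\|\xi\|_{n}\|\eta\|_{n}$, which follow from $\int T_n(\cdot)$ being submultiplicative without any reference to logarithmic order.
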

\begin{proof}
The space 
\[\mathcal{L}(\mathfrak{E}):=\left\{\begin{pmatrix}a & \xi \\ \eta^{*} & b\end{pmatrix}: a\in\mathfrak{A},\quad b\in\mathfrak{B},\quad \xi,\eta\in \mathfrak{E}\right\}\]
with multiplication and involution
\[\begin{pmatrix}a_{1} & \xi_{1} \\ \eta^{*}_{1} & b_{1}\end{pmatrix}\cdot \begin{pmatrix}a_{2} & \xi_{2} \\ \eta^{*}_{2} & b_{2}\end{pmatrix}:=\begin{pmatrix}a_{1}a_{2}+{}_\mathfrak{A}(\xi_{1}\mid \eta_{2}) & a_{1}\xi_{2}+\xi_{1}b_{2} \\ (a_{2}^{*}\eta_{1})^{*}+(\eta_{2}b_{2})^{*} & (\eta_{1}\mid \xi_{2})_{\mathfrak{B}}+b_{1}b_{2}\end{pmatrix},
\quad \begin{pmatrix}a & \xi \\ \eta^{*} & b\end{pmatrix}^{*}:=\begin{pmatrix}a^{*} & \eta^{*} \\ \xi & b^{*}\end{pmatrix},\]
is an associative $*$-algebra, the linking algebra of $\mathfrak{E}$. It is a dense $*$-subalgebra of the 
linking $C^{*}$-algebra $\mathcal{L}(E)$ of the $C^{*}$-module closure $E$ of $\mathfrak{E}$.
Using the identities \eqref{eq: bimodder}, \eqref{eq: leftrighther} and \eqref{eq: lefther}, 
one shows that the maps $\nabla_{j}$ induce a family of commuting $*$-derivations on the linking algebra via
\[\Delta_{j}\begin{pmatrix} a & \xi \\ \eta^{*} & b\end{pmatrix}:=\begin{pmatrix} \partial^{L}_{j}a & \nabla_{j}( \xi )\\ -\nabla_{j}(\eta)^{*} & \partial^{R}_{j}b\end{pmatrix},
\]
and the norms $\|e\|_{n}$ defined above are the restrictions of the norms obtained from the derivations $\Delta_{j}$.
This proves that $\calE_{n}$ is an $\calA_{n}$-$\calB_{n}$ bimodule. 
 Since $\calE_{n}$ is a subspace of $E$, the properties of Definition \ref{def:pre_MEB} 
 will follow once we show that the left and right inner products on ${}_A E_{B}$ take values in 
 $\calA_{n}$ and $\calB_{n}$, respectively, when restricted to $\calE_{n}$. 
 This in turn follows since the inner products are realised as multiplications in the linking algebra. 
 Lastly we have $\|e\|_{k,\omega}\leq \|e\|_{k}$ for any state $\omega$,
 so the image of $\calE_{n}$ is bounded 
 in each of the seminorms $\|e\|_{k,\omega}$ with $k\leq n$.
\end{proof}

\begin{remark}
The pre-Morita equivalence bimodule ${}_{\calA_\infty}{\calE_\infty}_{\calB_\infty}$ is the `smoothest' bimodule over 
pre-$C^*$-algebras that we can consider from the derivations $\{\partial_j^L\}_{j=1}^d$ and 
$\{\partial_j^R\}_{j=1}^d$. The lower-order pre-Morita equivalence bimodules 
${}_{\calA_n}{\calE_n}_{\calB_n}$ for $1\leq n < \infty$ will allow us to consider $C^*$-module 
and Hilbert space frames of differing regularity. This will be of use to us when considering the localisation 
dichotomy of Wannier bases in Section \ref{subsec:Loc_dichotomy}.
\end{remark}

\subsection{Groupoid $C^*$-algebras and equivalence}
Groupoids provide a useful generalisation of groups and group actions on spaces. 
A standard reference for groupoid  $C^*$-algebras is~\cite{Renault80}.

\begin{defn}
A groupoid is a set $\calG$ with a subset $\calG^{(2)} \subset \calG\times\calG$, 
a multiplication map $\calG^{(2)}\to\calG$, $(\gamma,\xi)\mapsto \gamma\xi$ 
and an inverse $\calG \to \calG$ $\gamma\mapsto \gamma^{-1}$ such that
\begin{enumerate}
  \item[(i)] $(\gamma^{-1})^{-1}=\gamma$ for all $\gamma\in\calG$, 
  \item[(ii)] if $(\gamma,\xi), (\xi,\eta)\in \calG^{(2)}$, then $(\gamma\xi,\eta), (\gamma,\xi\eta)\in\calG^{(2)}$, 
  \item[(iii)] $(\gamma,\gamma^{-1})\in\calG^{(2)}$ for all $\gamma\in\calG$, 
  \item[(iv)] for all $(\gamma,\xi)\in\calG^{(2)}$, $(\gamma\xi)\xi^{-1}=\gamma$ and $\gamma^{-1}(\gamma\xi) = \xi$.
\end{enumerate}
\end{defn}
Given a groupoid we denote by $\calG^{(0)}= \{\gamma \gamma^{-1}\,:\, \gamma\in\calG\}$ the space of 
units and define the source and range maps $r,s:\calG \to\calG^{(0)}$ by the equations
\begin{align*}
  r(\gamma) = \gamma\gamma^{-1},  &&s(\gamma) = \gamma^{-1}\gamma
\end{align*}
for all $\gamma\in\calG$. The source and range maps allow us to characterise
$$
   \calG^{(2)} = \big\{ (\gamma,\xi) \in \calG\times\calG \,:\, s(\gamma) = r(\xi) \big\}.
$$
Throughout this work, we will assume that $\calG$ is equipped with second countable, locally compact and 
Hausdorff topology such that the mulitplication, inversion, source and range maps are all continuous. 
A groupoid $\calG$ is \'{e}tale if the range map $r:\calG \to \calG^{(0)}$ is a local homeomorphism.
\'{E}tale groupoids have the useful property that for all $x \in \calG^{(0)}$, the fibres $r^{-1}(x)$ and 
$s^{-1}(x)$ are discrete.

\begin{examples}
\begin{enumerate}
  \item[(i)] Let $G$ be a group, then it is also a groupoid such that $G^{(0)} = \{e\}$ with multiplication 
  and inverse given by the group operation. If $G$ is discrete, then it is \'{e}tale as a groupoid.
  \item[(ii)] Let $X$ be a locally compact Hausdorff space, $G$ a locally compact group and suppose 
  there is a continuous left-action $G \to \mathrm{Homeo}(X)$ so that 
  $(g,x) \mapsto g\cdot x$ is jointly continuous.
   We can define the locally compact and Hausdorff 
  transformation groupoid $X \rtimes G$ given by pairs $(x,g) \in X \times G$ such that 
  $(X\rtimes G)^{(0)}=X$,
  \begin{align*}
  &(x,g)^{-1} = (g^{-1}\cdot x, g^{-1}), &&(x,g)(g^{-1}\cdot x, h) = (x, gh), 
  &&s(x,g) = g^{-1}\cdot x,   &&r(x,g) = x.
  \end{align*}
\end{enumerate}
\end{examples}

\begin{defn}
Let $\calG$ be a locally compact and Hausdorff groupoid. A continuous 
map $\sigma:\calG^{(2)}\to \T$ is a $2$-cocycle if 
$$
  \sigma(\gamma,\xi) \sigma(\gamma\xi,\eta) 
    = \sigma(\gamma, \xi\eta) \sigma(\xi,\eta)
$$
for any $(\gamma,\xi),(\xi,\eta)\in\calG^{(2)}$, 
and
$$
  \sigma(\gamma, s(\gamma)) = 1 = \sigma(r(\gamma),\gamma)
$$
for all $\gamma\in\calG$. We will call a groupoid $2$-cocycle normalised 
if $\sigma(\gamma,\gamma^{-1})=1$ for all $\gamma\in\calG$.
\end{defn}

 The following result is well-known, though for completeness we provide a proof.
\begin{lemma} \label{lem:2cocycle_identities}
If a groupoid $2$-cocycle $\sigma: \calG^{(2)}\to \mathbb{T}$ is normalised, then 
$\sigma(\gamma,\xi) = \ol{\sigma(\gamma\xi,\xi^{-1})}$ and 
$\sigma(\gamma,\xi) = \ol{\sigma(\xi^{-1}, \gamma^{-1})}$ for all $(\gamma,\xi) \in \calG^{(2)}$.
\end{lemma}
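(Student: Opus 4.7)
The plan is to obtain both identities by plugging carefully chosen triples into the cocycle relation
\[\sigma(\gamma,\xi)\sigma(\gamma\xi,\eta)=\sigma(\gamma,\xi\eta)\sigma(\xi,\eta),\]
and using the unit conditions $\sigma(\gamma,s(\gamma))=\sigma(r(\gamma),\gamma)=1$ together with the normalisation $\sigma(\gamma,\gamma^{-1})=1$ to kill most of the factors. Throughout, the identities live in $\mathbb{T}$, so $|\sigma|=1$ and multiplicative inverses coincide with complex conjugates.

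For the first identity, I would substitute $\eta=\xi^{-1}$ (which is composable with $\xi$). On the right-hand side, $\xi\eta=\xi\xi^{-1}=r(\xi)=s(\gamma)$, so $\sigma(\gamma,\xi\eta)=\sigma(\gamma,s(\gamma))=1$ by the unit axiom, and $\sigma(\xi,\xi^{-1})=1$ by normalisation. The relation collapses to
\[\sigma(\gamma,\xi)\,\sigma(\gamma\xi,\xi^{-1})=1,\]
from which $\sigma(\gamma,\xi)=\overline{\sigma(\gamma\xi,\xi^{-1})}$ follows immediately.

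For the second identity, I would substitute the triple $(\gamma\xi,\xi^{-1},\gamma^{-1})$, first checking composability: $s(\gamma\xi)=s(\xi)=r(\xi^{-1})$ and $s(\xi^{-1})=r(\xi)=s(\gamma)=r(\gamma^{-1})$. Then $\gamma\xi\cdot\xi^{-1}=\gamma$, so the factor $\sigma(\gamma\xi\cdot\xi^{-1},\gamma^{-1})=\sigma(\gamma,\gamma^{-1})=1$ by normalisation, and $\xi^{-1}\gamma^{-1}=(\gamma\xi)^{-1}$, so $\sigma(\gamma\xi,\xi^{-1}\gamma^{-1})=\sigma(\gamma\xi,(\gamma\xi)^{-1})=1$, again by normalisation. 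The cocycle relation therefore reduces to
\[\sigma(\gamma\xi,\xi^{-1})=\sigma(\xi^{-1},\gamma^{-1}).\]
Combining this with the first identity yields $\sigma(\gamma,\xi)=\overline{\sigma(\gamma\xi,\xi^{-1})}=\overline{\sigma(\xi^{-1},\gamma^{-1})}$.

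There is no real obstacle; the only subtlety is locating the substitution for the second identity (the triple $(\gamma\xi,\xi^{-1},\gamma^{-1})$ is engineered precisely so that three of the four products in the cocycle relation land on a unit or on a pair of the form $(\eta,\eta^{-1})$). Once chosen, both identities drop out of two lines of algebra.
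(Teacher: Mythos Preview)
Your proof is correct and uses essentially the same approach as the paper: both obtain the first identity from the cocycle relation with $\eta=\xi^{-1}$, and both feed the triple $(\gamma\xi,\xi^{-1},\gamma^{-1})$ into the cocycle relation for the second. Your handling of the second identity is in fact slightly more streamlined---you observe directly that $\sigma(\gamma,\gamma^{-1})=1$ and $\sigma(\gamma\xi,(\gamma\xi)^{-1})=1$ by normalisation, whereas the paper reaches $\overline{\sigma(\gamma,\xi)}$ by applying the first identity to each factor of $\sigma(\gamma\xi,\xi^{-1})\sigma(\gamma,\gamma^{-1})$ and then using the unit axiom. (A minor quibble: only two of the four factors collapse in your second substitution, not three.)
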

\begin{proof}
Using the $2$-cocyle identity,
$$
  \sigma(\gamma,\xi) \sigma(\gamma\xi, \xi^{-1} ) = \sigma(\gamma, s(\xi^{-1}) ) \sigma(\xi,\xi^{-1}) = 1
$$
so $\sigma(\gamma,\xi) = \ol{\sigma(\gamma\xi, \xi^{-1} )}$. Next we compute that 
$$
  \sigma(\gamma\xi, \xi^{-1})\sigma(\gamma\xi\xi^{-1},\gamma^{-1}) 
  = \sigma(\gamma\xi, \xi^{-1}\gamma^{-1}) \sigma(\xi^{-1},\gamma^{-1}) = \sigma(\xi^{-1},\gamma^{-1})
$$
and using the first identity
$$
  \sigma(\gamma\xi, \xi^{-1})\sigma(\gamma\xi\xi^{-1},\gamma^{-1}) 
  = \ol{\sigma(\gamma,\xi)  \sigma(\gamma\xi \xi^{-1} \gamma^{-1}, \gamma)} 
  = \ol{\sigma(\gamma, \xi) \sigma( r(\gamma\xi),\gamma)} = \ol{\sigma(\gamma,\xi)}
$$
which gives that $\ol{\sigma(\gamma,\xi)} = \sigma(\xi^{-1},\gamma^{-1})$.
\end{proof}

We briefly review the construction of groupoid $C^*$-algebras. 
\begin{defn}
A Haar system on a locally compact Hausdorff groupoid $\calG$ is a set of measures 
$\{\nu^x \,:\, x\in \calG^{(0)}\}$ on $\calG$ such that $\mathrm{supp}(\nu^x) = r^{-1}(x)$ and 
for all $f \in C_c(\calG)$,
\begin{align*}
  &\int_\calG f(\xi) \, \mathrm{d}\nu^{r(\eta)}(\xi) = \int_\calG f(\eta\xi) \, \mathrm{d}\nu^{s(\eta)}(\xi), 
  &&g(x) := \int_\calG f(\xi) \, \mathrm{d}\nu^x(\xi) \in C(\calG^{(0)}).
\end{align*}
\end{defn}

\'{E}tale groupoids always have a Haar system given by the counting measure on the (discrete) fibres 
$r^{-1}(x)$. Given $\calG$ with a $2$-cocycle $\sigma$ and Haar system $\{\nu^x\}_{x\in \calG^{(0)}}$, 
we can define a $\ast$-algebra structure on $C_c(\calG,\sigma)$, 
\begin{align*}
  &(f_1\ast f_2)(\eta) = \int_\calG f(\xi) g(\xi^{-1}\eta) \, \sigma(\xi, \xi^{-1}\eta) \,\mathrm{d}\nu^{r(\eta)}(\xi), 
  &&f^*(\xi) = \sigma(\xi, \xi^{-1}) \, \ol{f(\xi^{-1})}.
\end{align*}
We will restrict ourselves to normalised cocycles, $\sigma(\xi,\xi^{-1})=1$ as it covers all examples of interest to 
us. See~\cite{Renault80} for the general construction. 
The algebra $C_c(\calG,\sigma)$ has a right $C_0(\calG^{(0)})$-module structure, where 
$(f\cdot g)(\xi) = f(\xi) g(s(\xi))$ for $f\in C_c(\calG,\sigma)$ and 
$g \in C_0(\calG^{(0)})$. We can define a $C_0(\calG^{(0)})$-valued inner-product
$$
  (f_1 \mid f_2)_{\calG^{(0)}}(x) = \int_\calG \ol{f_1(\xi^{-1})} f_2(\xi^{-1}) \,\sigma(\xi,\xi^{-1})\, \mathrm{d}\nu^x(\xi) 
  = \int_\calG \ol{f_1(\xi^{-1})} f_2(\xi^{-1}) \, \mathrm{d}\nu^x(\xi) 
$$
as $\sigma$ is normalised. Completing this space in the norm of $C_0(\calG^{(0)})$ gives a 
right-$C_0(\calG^{(0)})$-module, which we denote by $L^2(\calG, \nu)_{\calG^{(0)}}$. 
There is a canonical left-action of $C_c(\calG,\sigma)$ on $L^2(\calG, \nu)_{\calG^{(0)}}$ by 
the (twisted) convolution product.

\begin{defn}[cf.~\cite{KS02}]
The reduced groupoid $C^*$-algebra $C^*_r(\calG,\sigma)$ is the completion of 
$C_c(\calG,\sigma)$ in the norm inherited from the embedding 
$C_c(\calG,\sigma) \hookrightarrow \End^*_{C_0(\calG^{(0)})}( L^2(\calG,\nu))$.
\end{defn}

If there is a topological space $Z$ with (continuous)
map $\rho: Z \to \calG^{(0)}$, we denote by $\calG \ltimes_\rho Z$ and $Z \rtimes_\rho \calG$ the pullback 
with respect to the 	source and range maps respectively.

\begin{defn}[$\calG$-space]
A Hausdorff topological space $Z$ is a left $\calG$-space if there is a continuous map, called the anchor  or moment map, 
$\rho:Z \to \calG^{(0)}$ and a continuous map 
$$
   \calG \ltimes_\rho Z \to Z, \qquad (\gamma, z) \mapsto \gamma \cdot z \in Z
$$
such that for $(\gamma,z) \in \calG \ltimes_\rho Z$ and $(\gamma_1,\gamma_2) \in \calG^{(2)}$,
\begin{align*}
  &\rho(\gamma\cdot z) = r(\gamma), 
  &&\rho(z) \cdot z = z, 
  &&(\gamma_1\gamma_2)\cdot z = \gamma_1 \cdot( \gamma_2\cdot z)
\end{align*}
\end{defn}

Unless otherwise stated, we will always assume that the moment map $\rho:Z\to \calG^{(0)}$ is 
open and surjective.
One may also consider a right $\calH$-space from $\phi:Z \to \calH^{(0)}$, where the 
definition is analogous to the above 
but instead using a map $Z \rtimes_\phi \calH  \to Z$ such that 
$\rho(z \cdot \eta) =  s(\eta)$.
 When the context is clear, we will write left/right-actions 
as $\gamma z$ or $z\eta$.

We say that $Z$ is a proper $\calG$-space (or that $\calG$ acts properly) if the map 
$$
  \calG \ltimes_\rho Z \to Z \times Z, \qquad (\gamma,z) \mapsto (\gamma\cdot z, z)
$$
is proper. If the map $(\gamma,z) \mapsto (\gamma\cdot z, z)$ is injective, 
then we say that 
$Z$ is a free $\calG$-space or that $\calG$ acts freely.

\begin{defn}
Let $\calG$ and $\calH$ be (locally compact, Hausdorff) groupoids and assume that 
$Z$ carries both a left-$\calG$ and right-$\calH$ action via moment
maps $\rho:Z\to \calG^{(0)}$ and $\phi: Z\to \calH^{(0)}$. We say that 
$Z$ is a $\calG$--$\calH$-bibundle if the actions commute, i.e., 
\begin{enumerate}
  \item for all $(\gamma,z) \in \calG \ltimes_\rho Z$ and $(z,\eta) \in Z \rtimes_\phi \calH$, 
  $(\gamma\cdot z) \cdot \eta = \gamma \cdot (z\cdot \eta)$,
  \item for all $(z,\eta) \in Z\rtimes_\phi \calH, \rho(z\cdot \eta) = \rho(z)$,
  \item for all $(\gamma,z) \in \calG\ltimes_\rho Z$, $\phi(\gamma\cdot z) = \phi(z)$.
\end{enumerate}

A $\calG$--$\calH$ bibundle is a groupoid equivalence if the maps 
\begin{align*}
  &\calG \ltimes_\rho Z \to Z \ast_{\calH^{(0)}} Z, \qquad \qquad (\gamma,z)\mapsto (\gamma\cdot  z, z), \\
  &Z \rtimes_\phi \calH \to Z \ast_{\calG^{(0)}} Z, \qquad \qquad (z, \eta) \mapsto (z, z\cdot \eta)
\end{align*}
are homeomorphisms.
\end{defn}

At the level of operator algebras, groupoid equivalence implies Morita equivalence of the 
(full or reduced) groupoid $C^*$-algebras~\cite{MRW, SimsWilliams}. 
Let $\calG \xleftarrow{\rho} Z \xrightarrow{\phi} \calH$ be a groupoid 
equivalence 
such that $\calG$ and $\calH$ have  Haar systems $\{\nu^{x}\}_{x\in\calG^{(0)}}$
and $\{\lambda^{y}\}_{y \in \calH^{(0)}}$ respectively.
Then there is a left (resp. right) 
action of $C_c(\calG)$ (resp. $C_c(\calH)$) on $C_c(Z)$ given by 
\begin{align}  \label{eq:gpoid_equiv_mod_structure_1}
      (f \cdot \xi)(z) &=  \int_{\calG} f(\gamma) \xi(\gamma^{-1} z) \,\mathrm{d}\nu^{\rho(z)}(\gamma),  
        \quad \xi \in C_c(Z), \,\, f \in C_c(\calG),    \nonumber  \\
    (\xi \cdot g)(z) &=  \int_{\calH} \xi( z \eta) g(\eta^{-1} ) \, \mathrm{d}\lambda^{\phi(z)}(\eta), 
        \quad \xi \in C_c(Z), \,\, g \in C_c(\calH).
\end{align}
There is also the $C_c(\calH)$-valued inner product 
\begin{equation}  \label{eq:gpoid_equiv_mod_structure_2}
   ( \xi_1 \mid \xi_2)_\calH(\eta) = \int_{\calG} \ol{ \xi_1(\gamma^{-1}z)} \xi_2 (\gamma^{-1} z\eta) \,
   \mathrm{d}\nu^{\rho(z)}(\gamma)
\end{equation}
where $z\in Z$ is chosen such that $\phi(z) = r(\eta)$. 
\begin{prop}[\cite{SimsWilliams}, Theorem 4.1] \label{prop:untwisted_gpoid_morita_equiv}
Let $Z$ be a $\calG$--$\calH$ groupoid equivalence. Then $C_{c}(Z)$ is a pre-Morita equivalence bimodule for $C_{c}(\calG)$ and $C_{c}(\calH)$. Consequently, 
$C^*_r(\calG)$ and $C^*_r(\calH)$ are Morita equivalent.
\end{prop}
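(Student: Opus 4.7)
The plan is to verify directly that the formulas in Equations \eqref{eq:gpoid_equiv_mod_structure_1}--\eqref{eq:gpoid_equiv_mod_structure_2}, together with their left-handed counterparts, equip $C_c(Z)$ with the structure of a pre-Morita equivalence bimodule in the sense of Definition \ref{def:pre_MEB}, and then to pass to the $C^*$-completion. First I would introduce the $C_c(\calG)$-valued inner product
\[
  {}_\calG(\xi_1 \mid \xi_2)(\gamma) = \int_\calH \xi_1(z\cdot\eta) \,\ol{\xi_2(\gamma^{-1}\cdot z \cdot \eta)} \, \mathrm{d}\lambda^{\phi(z)}(\eta),
\]
where $z\in Z$ is any point with $\rho(z) = r(\gamma)$. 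The first thing to check is that this is independent of the choice of $z$: since $Z$ is a principal $\calG$--$\calH$ bibundle, any two such $z, z'$ differ by a unique element of $\calH$, and invariance of the Haar system $\lambda$ under right translation gives the required equality. One then verifies, using properness of the actions and continuity of the moment maps, that the integrals defining the actions and both inner products have compactly supported outputs in $C_c(\calG)$, $C_c(\calH)$, or $C_c(Z)$ as appropriate.

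Next I would work through the algebraic axioms. The bimodule property $(f\cdot \xi)\cdot g = f\cdot (\xi\cdot g)$, the adjointness identities ${}_\calG(f\cdot \xi\mid\zeta) = {}_\calG(\xi\mid f^*\cdot\zeta)$ and $(\xi\mid\zeta\cdot g)_\calH = (\xi\mid\zeta)_\calH\cdot g$ (together with their opposites), and the crucial associativity relation ${}_\calG(\xi_1\mid \xi_2)\cdot \xi_3 = \xi_1\cdot (\xi_2\mid \xi_3)_\calH$ all reduce to change-of-variables computations in the integrals, using commutativity of the $\calG$- and $\calH$-actions, invariance of $\nu$ and $\lambda$, and the bijectivity of the maps $(\gamma,z)\mapsto(\gamma\cdot z, z)$ and $(z,\eta)\mapsto(z, z\cdot\eta)$ that express the equivalence. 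These are routine but lengthy; since this is a plan, I would not unfold them in detail.

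The real obstacle is \emph{positivity and fullness of the inner products, together with the norm inequalities} of Definition \ref{def:pre_MEB}. For positivity, I would exploit that both actions are proper and free: locally on $Z$ one can write ${}_\calG(\xi\mid\xi)$ and $(\xi\mid\xi)_\calH$ as sums of inner products of the form $\langle h,h\rangle$ after disintegrating along the orbit projections, so that positivity as elements of $C_0(\calG^{(0)})$-modules transfers to positivity in the full $C^*$-algebras $C^*_r(\calG)$ and $C^*_r(\calH)$. Equivalently one checks positivity via the regular representation: $(\xi\mid\xi)_\calH$ acts as a positive operator on $L^2(\calH,\lambda)$ because it coincides with $T^*T$ for the operator $T$ on the appropriate Hilbert module induced by $\xi$. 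Fullness is a partition-of-unity argument: for each $y\in\calH^{(0)}$ one picks $z\in\phi^{-1}(y)$ and a bump $\xi$ supported near $z$ so that $(\xi\mid\xi)_\calH(y) > 0$, and uses local triviality of $\phi$ (a consequence of openness of $\phi$ and principality) to spread this over $\calH$. The norm estimates $(f\cdot\xi\mid f\cdot\xi)_\calH \leq \|f\|^2 (\xi\mid\xi)_\calH$ and its mirror follow from the fact that the left-regular representations of $C_c(\calG)$ and $C_c(\calH)$ on the induced Hilbert module are bounded by the reduced $C^*$-norms, which can be seen by realising everything inside $\End^*_{C_0(\calG^{(0)})}(L^2(\calG,\nu))$ and its counterpart for $\calH$.

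Having established these properties, completing $C_c(Z)$ in either norm $\|\xi\|^2 = \|(\xi\mid\xi)_\calH\|_{C^*_r(\calH)}$ or $\|\xi\|^2 = \|{}_\calG(\xi\mid\xi)\|_{C^*_r(\calG)}$ yields a full $C^*$-module on both sides, and by the comments following Definition \ref{def:pre_MEB} these two norms agree and produce the required Morita equivalence bimodule ${}_{C^*_r(\calG)}E_{C^*_r(\calH)}$. This would give the Morita equivalence of the reduced groupoid $C^*$-algebras, as stated; a more detailed argument in the full case, along with the subtleties of passing to the reduced completion via the compatibility of the regular representations, is carried out in \cite{MRW, SimsWilliams}, whose framework I would invoke to streamline the final step.
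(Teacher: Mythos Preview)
The paper does not actually prove this proposition; it is stated with attribution to \cite[Theorem~4.1]{SimsWilliams}, and the only further comment is that in the reduced setting the Morita equivalence bimodule is realised via the linking groupoid $L = \calG \sqcup Z \sqcup Z^{\mathrm{op}} \sqcup \calH$. Your outline is essentially the direct-verification route of \cite{MRW} for the full groupoid $C^*$-algebras, and you correctly flag in your last paragraph that the passage to the reduced completions is the delicate step deferred to \cite{SimsWilliams}.

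The one point you underplay is precisely \emph{how} that passage is carried out. Your sketch of the norm inequality $(f\cdot\xi\mid f\cdot\xi)_\calH \leq \|f\|^2 (\xi\mid\xi)_\calH$ via ``realising everything inside $\End^*_{C_0(\calG^{(0)})}(L^2(\calG,\nu))$'' does not quite work as stated: the bimodule $C_c(Z)$ is not naturally a submodule there, and it is not a priori clear that the left $C_c(\calG)$-action extends to the \emph{reduced} completion $C^*_r(\calG)$ rather than only the full one. The device in \cite{SimsWilliams}, which the paper singles out, is to assemble a single linking groupoid $L$ with a Haar system, form $C^*_r(L)$, and then read off $C^*_r(\calG)$, $C^*_r(\calH)$ and the bimodule as complementary corners; positivity and the norm bounds then come for free from the $C^*$-structure of $C^*_r(L)$. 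Since you already invoke \cite{SimsWilliams} at the end, your plan is sound, but it would be cleaner to name the linking-groupoid mechanism explicitly rather than the ad hoc positivity and boundedness arguments you sketch.
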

We denote by ${}_{\calG} L^2(Z)_\calH$ the Morita equivalence 
bimodule that links $C^*_r(\calG)$ and $C^*_r(\calH)$. 
Because we work with reduced groupoid $C^*$-algebras, the Morita equivalence bimodule 
completion of $C_c(Z)$
is constructed from the linking groupoid 
$L = \calG \sqcup Z \sqcup Z^\mathrm{op} \sqcup \calH$~\cite{SimsWilliams}. In Section \ref{sec: twistedmorita} we discuss instances of twisted groupoid Morita equivalence.

\section{Frames of translates and Wannier bases from groupoid equivalences} \label{sec:Gpoid_Gabor_General}

We let $\calG \xleftarrow{\rho} Z \xrightarrow{\phi} \calH$ be a 
$\calG$--$\calH$ equivalence of locally compact, 
second countable and Hausdorff groupoids such that $\calH^{(0)}$ is compact and 
$\calH$ is \'{e}tale. In particular, this implies that $C^*_r(\calH)$ is  unital. 
We assume that $\calG$ has a Haar system $\{\nu^{x}\}_{x\in\calG^{(0)}}$ 
and so ${}_{C_c(\calG)}{C_c(Z)}_{C_c(\calH)}$ is a pre-Morita equivalence bimodule 
 described by Equations \eqref{eq:gpoid_equiv_mod_structure_1} 
and \eqref{eq:gpoid_equiv_mod_structure_2}. Because $\calH$ is \'{e}tale, the 
right-action in Equation \eqref{eq:gpoid_equiv_mod_structure_1} reduces to a sum 
over the discrete set $r^{-1}(\phi(z))$.

\subsection{$C^*$-module frame}

Because all $C^*$-algebras are separable, there exists a countable right module frame for 
${}_\calG L^2(Z)_\calH$. Furthermore, since $C^*_r(\calH)$ is unital, 
${}_\calG L^2(Z)_\calH$ is finitely generated and projective as a left $C^*_r(\calG)$-module 
and so has a finite left $C^*$-module frame.

A right module frame for the submodule $C_c(Z)$ is constructed 
in~\cite[Proposition 2.10]{MRW}. We briefly review this construction. We say that 
a subset $L \subset \calG$ is $r$-relatively compact if $L \cap r^{-1}(K)$ is relatively 
compact for every compact $K \subset \calG$. We consider a triple $(K, U, \epsilon)$ 
with $K\subset \calG^{(0)}$ compact with $U \subset \calG$ an $r$-relatively compact 
neighbourhood of $\calG^{(0)}$ and $\epsilon >0$. Because $\calG^{(0)}$ is 
paracompact and $\calG$ acts properly on 
$Z$, there are open, relatively compact sets $\{V_j\}_{j=1}^n \subset Z$ such that 
$\{\rho(V_j)\}_{j=1}^n$ cover $K$ and are such that $(\gamma z, z)\in V_j \times V_j$ implies 
that $\gamma \in U$. We take a partition of unity $\{\psi_j\}_{j=1}^n$ subordinate 
to the cover $\{\rho(V_j)\}_{j=1}^N$. We can then find functions $\{\tilde{\psi}_j\}_{j=1}^n$ 
such that $\mathrm{supp}(\tilde{\psi}_j) \subset V_j$ and 
$$
   \sum_{\eta \in r^{-1}(\phi(z))} \! \tilde\psi_j ( z \eta)  = \psi_j( \rho(z) ).
$$
Finally, we can approximate $\tilde\psi_j$ with $\{\varphi_j\}_{j =1}^n$ such that 
$\mathrm{supp}(\varphi_j) \subset V_j$ and 
\begin{align*}
  \Big| \tilde\psi_j(z) - \varphi_j (z) \int_\calG \varphi_j( \gamma^{-1}z ) \,\mathrm{d}\nu^{\rho(z)}(\gamma) \Big| 
  \leq \frac{\epsilon}{M}
  \qquad 
  M = \sup_z  \sum_{j=1}^n \sum_{\eta \in r^{-1}(\phi(z))}  \chi_{V_j}( z \eta).
\end{align*}
The functions $e_n = \sum_{j=1}^n {}_\calG( \varphi_j \mid \varphi_j )$ are an 
approximate identity for the left action of $C_c(\calG)$ on $C_c(Z)$. 
The functions 
$\{\varphi_j\} \subset C_c(Z)$ can then be used
 to construct a right $C^*$-module frame in the 
Morita equivalence bimodule ${}_\calG L^2(Z)_\calH$~\cite{SimsWilliams}.

\subsection{Localisation and Hilbert space frame}
For any $x \in \calH^{(0)}$ we have a state $\omega_{x}$ given by the restriction of 
$f \in C_c(\calH)$ to $\calH^{(0)}$ and then evaluation at $x \in \calH^{(0)}$. 
We see that 
$$
  \omega_{x}(g^* g) = \sum_{\eta \in r^{-1}(x)} \! | g(\eta)|^2,
$$
which shows that $\omega_{x}$ is positive. There are other possible states on 
$C^*_r(\calH)$ that one may also consider such as integration with respect to a quasi-invariant 
measure \cite{Renault80}. The reason we choose the evaluation state is because we would like to construct 
Parseval frames that have a  discrete labeling. By choosing $x \in \calH^{(0)}$, the 
discrete set $r^{-1}(x)$ provides us with such a  labeling.

\begin{lemma} \label{lem:delta_fns_on_range_fibre}
Fix an element $x\in\calH^{(0)}$. Then there are real-valued functions 
$\{\delta_\alpha\}_{\alpha \in r^{-1}(x)}\subset C_c(\calH)$ such that 
$\omega_x( \delta_{\alpha_1} \ast \delta_{\alpha_2}^* ) = \delta_{\alpha_1, \alpha_2}$.
\end{lemma}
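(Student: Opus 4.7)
The plan is to exploit the étale structure of $\calH$ to build each $\delta_\alpha$ as a small compactly supported bump around $\alpha$ whose support meets the fibre $r^{-1}(x)$ only in $\{\alpha\}$. Then the required orthogonality relation under $\omega_x$ reduces to a pointwise Kronecker delta on the fibre.

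First I would unwind what $\omega_x(\delta_{\alpha_1} \ast \delta_{\alpha_2}^{*})$ actually is. Because $\calH$ is étale, the Haar system is counting measure on the discrete fibre $r^{-1}(x)$, and since $x$ is a unit one has $\xi^{-1}\cdot x = \xi^{-1}$ whenever $r(\xi)=x$. Combined with the (untwisted) involution $f^*(\xi)=\overline{f(\xi^{-1})}$, the definition of convolution collapses at the unit $x$ to
\[
\omega_x\bigl(\delta_{\alpha_1} \ast \delta_{\alpha_2}^{*}\bigr)
= \bigl(\delta_{\alpha_1} \ast \delta_{\alpha_2}^{*}\bigr)(x)
= \sum_{\xi \in r^{-1}(x)} \delta_{\alpha_1}(\xi)\,\overline{\delta_{\alpha_2}(\xi)}.
\]
For real-valued $\delta_\alpha$ this is $\sum_\xi \delta_{\alpha_1}(\xi)\delta_{\alpha_2}(\xi)$, so the problem reduces to producing real-valued $\delta_\alpha \in C_c(\calH)$ with $\delta_\alpha(\beta) = \delta_{\alpha,\beta}$ for all $\alpha,\beta \in r^{-1}(x)$.

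Next, since $r:\calH \to \calH^{(0)}$ is a local homeomorphism, for each $\alpha \in r^{-1}(x)$ I can pick an open bisection $U_\alpha \ni \alpha$ on which $r$ is injective; injectivity forces $U_\alpha \cap r^{-1}(x) = \{\alpha\}$. Because $\calH$ is locally compact and Hausdorff, Urysohn's lemma then provides a real-valued $\delta_\alpha \in C_c(\calH)$ with $\mathrm{supp}(\delta_\alpha)\subset U_\alpha$ and $\delta_\alpha(\alpha)=1$. Plugging these into the display: for $\alpha_1=\alpha_2=\alpha$ only the term $\xi=\alpha$ survives, contributing $\delta_\alpha(\alpha)^2=1$; for $\alpha_1\neq\alpha_2$ the support conditions force $\delta_{\alpha_1}(\xi)\delta_{\alpha_2}(\xi)=0$ for every $\xi \in r^{-1}(x)$. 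This yields $\omega_x(\delta_{\alpha_1}\ast\delta_{\alpha_2}^{*})=\delta_{\alpha_1,\alpha_2}$.

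There is no substantive obstacle; the proof is a purely local construction on $\calH$. The only care needed is the convolution/involution bookkeeping at a unit, and the compactness of $\calH^{(0)}$ and the bibundle structure play no role in this lemma—only the local homeomorphism property (giving discrete fibres and injective bisections) and Hausdorff local compactness (giving bump functions) are used.
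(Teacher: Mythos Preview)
Your proof is correct and follows essentially the same approach as the paper: both construct bump functions supported on open sets meeting $r^{-1}(x)$ only at $\alpha$, then reduce the convolution at $x$ to the finite sum $\sum_{\beta\in r^{-1}(x)}\delta_{\alpha_1}(\beta)\delta_{\alpha_2}(\beta)$. Your version is in fact a bit more explicit about why such $U_\alpha$ exist (injectivity of $r$ on a bisection) and correctly observes that compactness of $\calH^{(0)}$ is not actually used here.
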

\begin{proof}
Because $\calH^{(0)}$ is compact and $r^{-1}(x)$ is discrete, for each $\alpha \in r^{-1}(x)$ 
we take $\delta_\alpha$ a bump function supported on $U_\alpha$ such that 
$U_\alpha \cap r^{-1}(x) = \{\alpha\}$ and $\delta_\alpha(\alpha) = 1$. 
Then we compute that 
$$
  \omega_x\big(\delta_{\alpha_1} \ast \delta_{\alpha_2}^*\big) = \sum_{\beta \in r^{-1}(x)} {\delta_{\alpha_1}(\beta) }
  \delta_{\alpha_2}(\beta) = \delta_{\alpha_2}(\alpha_1) = \delta_{\alpha_1,\alpha_2}
$$
as the sum vanishes everywhere except for at most one term.
\end{proof}

\begin{lemma} Let $Z_x = \phi^{-1}(x)$ and for $z\in Z_{x}$ define the measure $\nu^{\rho(z)}$ on $Z_{x}$ by
\[\int_{Z_{x}}f(w) \mathrm{d}\nu^{\rho(z)}(w):=\int_{\mathcal{G}} f(\gamma^{-1}\cdot z)\mathrm{d}\nu^{\rho(z)}(\gamma).\]
The Hilbert space localisation $\frakh_x$ of ${}_{\calG} L^2(Z)_\calH$ in $\omega_x$ is 
$L^2(Z_x, \mathrm{d}\nu^{\rho(z)})$ with $z\in Z_{x}$ chosen arbitrarily.
\end{lemma}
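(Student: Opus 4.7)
The plan is to compute the inner product $\langle \xi_1,\xi_2\rangle_{\omega_x}=\omega_x\bigl((\xi_1\mid\xi_2)_{\calH}\bigr)$ directly on the dense subspace $C_c(Z)\subset {}_{\calG}L^2(Z)_{\calH}$ and show that it coincides with the $L^2(Z_x,\nu^{\rho(z)})$-inner product after restriction to $Z_x$; then density will finish the argument.

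First I would unwind the evaluation. Since $\omega_x$ is evaluation at the unit $x\in\calH^{(0)}$, and since $(\xi_1\mid\xi_2)_{\calH}(\eta)$ is defined via any $z\in Z$ with $\phi(z)=r(\eta)$, I would take $\eta=x$ and choose $z\in Z_x=\phi^{-1}(x)$. Because $x$ is a unit and $\phi(z)=x$, the product $z\cdot x$ equals $z$, so Equation \eqref{eq:gpoid_equiv_mod_structure_2} collapses to
\[
\omega_x\bigl((\xi_1\mid\xi_2)_{\calH}\bigr)=\int_{\calG}\overline{\xi_1(\gamma^{-1}\cdot z)}\,\xi_2(\gamma^{-1}\cdot z)\,\mathrm{d}\nu^{\rho(z)}(\gamma).
\]
Applying the defining identity for the measure $\nu^{\rho(z)}$ on $Z_x$ gives
\[
\omega_x\bigl((\xi_1\mid\xi_2)_{\calH}\bigr)=\int_{Z_x}\overline{\xi_1(w)}\,\xi_2(w)\,\mathrm{d}\nu^{\rho(z)}(w)=\bigl\langle \xi_1|_{Z_x},\,\xi_2|_{Z_x}\bigr\rangle_{L^2(Z_x,\nu^{\rho(z)})}.
\]
At this point the restriction map $\mathrm{res}:C_c(Z)\to L^2(Z_x,\nu^{\rho(z)})$, $\xi\mapsto \xi|_{Z_x}$, is an isometry after quotienting by the null space of $\langle\cdot,\cdot\rangle_{\omega_x}$ (which consists exactly of those $\xi\in C_c(Z)$ vanishing $\nu^{\rho(z)}$-a.e.\ on $Z_x$).

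Next I would identify the image of $\mathrm{res}$. Since $Z_x$ is closed in $Z$ and any $\xi\in C_c(Z)$ has support with compact intersection with $Z_x$, the restriction lies in $C_c(Z_x)$. Conversely, given $g\in C_c(Z_x)$ with compact support $K\subset Z_x$, the Tietze extension theorem (applicable because $Z$ is locally compact, Hausdorff and second countable, hence normal) provides a continuous extension $\tilde g_0\in C(Z)$; multiplying by a bump function $\chi\in C_c(Z)$ equal to $1$ on a neighbourhood of $K$ produces $\tilde g:=\chi\tilde g_0\in C_c(Z)$ with $\tilde g|_{Z_x}=g$. Hence $\mathrm{res}(C_c(Z))=C_c(Z_x)$, which is dense in $L^2(Z_x,\nu^{\rho(z)})$ by the standard approximation of $L^2$-functions on a locally compact Hausdorff space.

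Combining these, the completion of $C_c(Z)$ in $\langle\cdot,\cdot\rangle_{\omega_x}$ is unitarily identified with $L^2(Z_x,\nu^{\rho(z)})$ via $\mathrm{res}$, proving $\frakh_x\cong L^2(Z_x,\mathrm{d}\nu^{\rho(z)})$. The only mild subtlety worth checking is that the construction is independent of the chosen $z\in Z_x$: since $\calG$ acts freely and transitively on the fibres of $\phi$, any two choices $z,z'\in Z_x$ are related by $z'=\gamma_0\cdot z$ for a unique $\gamma_0\in\calG$, and left-invariance of the Haar system $\{\nu^x\}_{x\in\calG^{(0)}}$ ensures that the resulting measures on $Z_x$ agree, so the identification above is canonical.
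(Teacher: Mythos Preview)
Your proof is correct and follows essentially the same approach as the paper: both compute $\omega_x((\xi_1\mid\xi_2)_\calH)$ by choosing $z\in Z_x$, use $z\cdot x=z$ to collapse the inner product formula to $\int_\calG \overline{\xi_1(\gamma^{-1}z)}\xi_2(\gamma^{-1}z)\,\mathrm{d}\nu^{\rho(z)}(\gamma)$, and then verify independence of the choice of $z$ via transitivity of the $\calG$-action on $Z_x$ together with invariance of the Haar system. The paper's proof is terser and simply asserts that the resulting Hilbert space is the $L^2$-completion of $C_c(Z_x)$; your Tietze extension argument making the surjectivity $\mathrm{res}(C_c(Z))=C_c(Z_x)$ explicit is a welcome addition that fills in a step the paper leaves to the reader.
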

\begin{proof}
We consider the inner-product, where 
\begin{align*}
  \langle e_1, e_2 \rangle_x &= \omega_x \big( (e_1\mid e_2)_\calH \big) = (e_1\mid e_2)_\calH(x) 
  = \int_{\calG} \ol{e_1(\gamma^{-1}z) }  e_2(\gamma^{-1} z x) \,\mathrm{d}\nu^{\rho(z)} (\gamma) \\
  &= \int_{\calG} \ol{e_1(\gamma^{-1}z) }  e_2(\gamma^{-1} z \phi(z)) \,\mathrm{d}\nu^{\rho(z)} (\gamma) 
  = \int_{\calG} \ol{e_1(\gamma^{-1}z) }  e_2(\gamma^{-1} z) \,\mathrm{d}\nu^{\rho(z)} (\gamma) 
\end{align*}
Hence the Hilbert space is the $L^2$-completion of $C_c(Z_x)$  with respect 
to the measure $\nu^{\rho(z)}$. Since $Z$ is an equivalence, for every $z,w\in Z_{x}$ there exists $\gamma\in r^{-1}(\rho(z))\subset \mathcal{G}$ such that $w=\gamma^{-1}z$. The measure $\nu^{\rho(z)}$ is thus independent of the choice of $z\in Z_{x}$. 
\end{proof}

Given $x\in \calH^{(0)}$ and $e \in {}_{\calG} L^2(Z)_\calH$, we let $e_x$ be the corresponding element in the 
localisation $L^2(Z_x)$. Given any $\alpha \in r^{-1}(x)$,  define a function $e_{x}^{\alpha}\in L^2(Z_x)$ by
$(e_x^\alpha)(y) = e(y\alpha)$, $y\in Z_x$. 

\begin{lemma} \label{lem:module_properties_lead_to_localisation_properties}
Let $e\in {}_{\calG} L^2(Z)_\calH$, $a\in\mathcal{H}^{(0)}$ and $\alpha\in r^{-1}(x)$. 
\begin{enumerate}
  \item[(i)] There is an equality $e_x^\alpha = ( e \cdot \delta_\alpha^*)_x$ with 
  $\delta_{\alpha}$ be the bump functions from Lemma \ref{lem:delta_fns_on_range_fibre}.
  \item[(ii)] If $e \in {}_{\calG} L^2(Z)_\calH$ is such that $(e\mid e)_\calH = 1_\calH$, then 
  $\{e_x^\alpha\}_{\alpha\in r^{-1}(x)}$ is an orthonormal system in $L^2(Z_x)$;
  \item[(iii)] Given $e_1, e_2, \xi \in {}_{\calG} L^2(Z)_\calH$, 
  $$
     \big( \Theta^R_{e_1,e_2}(\xi)\big)_x = 
       \sum_{\alpha \in r^{-1}(x)} (e_1)_x^\alpha \, \langle  (e_2)_x^\alpha, \xi_x \rangle_x.
  $$
\end{enumerate} 
\end{lemma}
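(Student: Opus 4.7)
The plan is to verify each item by a direct computation, using the étale structure of $\calH$ (which turns the right-action integral into a sum over $r^{-1}(x)$) together with the module-structure formulas \eqref{eq:gpoid_equiv_mod_structure_1} and \eqref{eq:gpoid_equiv_mod_structure_2}.

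For (i), I would specialise the right-action formula \eqref{eq:gpoid_equiv_mod_structure_1} to the étale case, so that for $z \in Z_x$,
$$
   (e\cdot \delta_\alpha^*)(z) = \sum_{\eta\in r^{-1}(x)} e(z\eta)\,\delta_\alpha^*(\eta^{-1}).
$$
Since $\delta_\alpha$ is real-valued and the cocycle is trivial in this section, $\delta_\alpha^*(\eta^{-1}) = \ol{\delta_\alpha(\eta)} = \delta_\alpha(\eta)$. The choice of bump function in Lemma \ref{lem:delta_fns_on_range_fibre} ensures that $\delta_\alpha(\eta) = \delta_{\alpha,\eta}$ for $\eta\in r^{-1}(x)$, so only the $\eta=\alpha$ term survives and yields $e(z\alpha) = e_x^\alpha(z)$.

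For (ii), I would combine (i) with the compatibility of the right inner product with the right action:
$$
   \langle e_x^\alpha, e_x^\beta\rangle_x = \omega_x\!\left((e\cdot \delta_\alpha^* \mid e\cdot \delta_\beta^*)_\calH\right) = \omega_x\!\left(\delta_\alpha \ast (e\mid e)_\calH \ast \delta_\beta^*\right).
$$
Inserting $(e\mid e)_\calH = 1_\calH$ and applying Lemma \ref{lem:delta_fns_on_range_fibre} immediately gives $\omega_x(\delta_\alpha\ast \delta_\beta^*) = \delta_{\alpha,\beta}$.

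For (iii), I would start from $\Theta^R_{e_1,e_2}(\xi) = e_1\cdot(e_2\mid\xi)_\calH$ and apply the étale expansion of the right action to obtain
$$
   \big(\Theta^R_{e_1,e_2}(\xi)\big)_x(z) = \sum_{\alpha\in r^{-1}(x)} (e_1)_x^\alpha(z)\,(e_2\mid\xi)_\calH(\alpha^{-1}).
$$
The main step is the identification $(e_2\mid\xi)_\calH(\alpha^{-1}) = \langle (e_2)_x^\alpha,\xi_x\rangle_x$. Taking the base point $w = z\alpha$ in \eqref{eq:gpoid_equiv_mod_structure_2} (admissible since $\phi(z\alpha) = s(\alpha) = r(\alpha^{-1})$), using $z\alpha\alpha^{-1}=z$ and $\rho(z\alpha)=\rho(z)$, I obtain
$$
  (e_2\mid\xi)_\calH(\alpha^{-1}) = \int_\calG \ol{e_2(\gamma^{-1}z\alpha)}\,\xi(\gamma^{-1}z)\,\mathrm{d}\nu^{\rho(z)}(\gamma),
$$
which rewrites as $\langle (e_2)_x^\alpha,\xi_x\rangle_x$ upon recognising $e_2(\gamma^{-1}z\alpha) = (e_2)_x^\alpha(\gamma^{-1}z)$ and invoking the description of the measure on $Z_x$ given in the preceding lemma. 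The main obstacle is the index bookkeeping in part (iii), in particular keeping track of the source and range conditions to verify that the base point $w = z\alpha$ is admissible and that the resulting integral matches the Hilbert space inner product on $L^2(Z_x)$ independently of the choice of $z$.
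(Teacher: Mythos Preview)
Your proposal is correct and follows essentially the same route as the paper. Parts (i) and (ii) are identical to the paper's argument; in part (iii) the only difference is that you choose the admissible base point $w=z\alpha$ directly in the inner-product formula, whereas the paper first picks an arbitrary base point with $\phi(z)=s(\alpha)$ and then substitutes $u=z\alpha^{-1}$, which amounts to the same computation.
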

\begin{proof}
We first note that $e(y\alpha)$ is well-defined as $\rho(y) = x = r(\alpha)$. For part (i) we compute 
for $y \in Z_x$,
\begin{align*}
  (e\cdot \delta_\alpha^*)(y) &= \sum_{\beta \in r^{-1}(x)} e( y\beta) \delta_\alpha^*(\beta^{-1})  
   = \sum_{\beta \in r^{-1}(x)} e(y\beta) \delta_\alpha(\beta) = e(y\alpha).
\end{align*}
Using part (i) and Lemma \ref{lem:delta_fns_on_range_fibre}, we see that 
for $e$ such that $(e\mid e)_\calH = 1$, 
\begin{align*}
  \langle e_x^{\alpha_1}, e_x^{\alpha_2} \rangle_x &= 
  \omega_x \big( (e \cdot \delta_{\alpha_1}^* \mid e\cdot \delta_{\alpha_2}^*)_\calH \big) 
  = \omega_x \big( \delta_{\alpha_1} \, (e\mid e)_\calH  \, \delta_{\alpha_2}^* \big) 
  = \delta_{\alpha_1, \alpha_2} 
\end{align*}
and so $\{e_x^\alpha\}_{\alpha\in r^{-1}(x)}$ is an orthonormal system, which proves part (ii).

For part (iii), we again compute for $y\in \phi^{-1}(x)$
\begin{align*}
    \big( \Theta^R_{e_1,e_2}(\xi)\big)(y) &= \sum_{\alpha \in r^{-1}(x)} e_1(y\alpha) (e_2\mid \xi)_\calH(\alpha^{-1}) \\
    &= \sum_{\alpha \in r^{-1}(x)} e_1(y\alpha) 
         \int_{\calG} \ol{e_2(\gamma^{-1}z)} \xi(\gamma^{-1} z \alpha^{-1})\,\mathrm{d}\nu^{\rho(z)}(\gamma).
\end{align*}
We now let $u= z\alpha^{-1}$, where 
$u\alpha = zs(\alpha) = z\phi(z) = z$ and $\rho(u) = \rho(z\alpha^{-1}) = \rho(z)$ as 
$Z$ is a groupoid equivalence. 
Hence
\begin{align*}
  \big( \Theta^R_{e_1,e_2}(\xi)\big)(y)
    &= \sum_{\alpha \in r^{-1}(x)} e_1(y\alpha)  
       \int_{\calG}  \ol{e_2(\gamma^{-1} u \alpha) } \xi( \gamma^{-1} u) \,\mathrm{d}\nu^{\rho(u)}(\gamma) \\
     &= \sum_{\alpha \in r^{-1}(x)}  (e_1)_x^\alpha(y) \, \langle (e_2)_x^\alpha,  \xi_x  \rangle_x .
        \qedhere
\end{align*} 
\end{proof}

For a countable set $J$ and a $C^{*}$-algebra $B$ we denote by $\ell^{2}(J, B)$ the 
standard Hilbert $C^{*}$-module of sequences indexed by $J$. That is
\[\ell^{2}(J, B) := \Big\{f:J\to B: \sum_{j\in J} f(j)^{*}f(j)<\infty\Big\},\]
where the series converges in $B$.
 We now come to our main result relating $C^*$-module frames to localised frames of translates.
\begin{thm}  \label{thm:equivalence_to_hilbert_frame}
Let $\{e_j\}_{j\in J}\subset {}_{\calG} L^2(Z)_\calH$ be a countable subset and 
$E:=\overline{\textnormal{span}_{\calH}\{e_j :j\in J \}}$ the closed $C^{*}_{r}(\mathcal{H})$ 
submodule generated by $\{e_j\}_{j\in J}$. The following are equivalent:
\begin{enumerate}
\item The sequence $\{e_{j}\}_{j\in J}$ is a right $C^*$-module frame of $E\subset {}_{\calG} L^2(Z)_\calH$;
\item For $\xi\in E$ the map $j\mapsto ( e_{j}\mid \xi)_\calH$ takes values in  
$\ell^{2}(J, C^{*}_{r}(\mathcal{H}))$ and for all $x \in \calH^{(0)}$ the set 
$\{(e_j)_x^\alpha\}_{j\in J, \alpha \in r^{-1}(x)}$ is a normalised tight frame for $\overline{E_{x}}\subset L^2(Z_x)$;
\end{enumerate}
\end{thm}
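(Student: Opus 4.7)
My proof plan centres on the identity of Lemma \ref{lem:module_properties_lead_to_localisation_properties}(iii),
\[
\bigl(\Theta^R_{e_1,e_2}(\xi)\bigr)_x \;=\; \sum_{\alpha\in r^{-1}(x)}(e_1)_x^\alpha\,\langle (e_2)_x^\alpha,\xi_x\rangle_x\quad\textrm{in } L^2(Z_x),
\]
which routes rank-one $C^*_r(\calH)$-linear operators on $E$ to sums of rank-one operators on each fibre. The equivalence of (1) and (2) is then obtained by transferring the Parseval identity between the module and its fibres using the canonical conditional expectation $\Phi\colon C^*_r(\calH)\to C(\calH^{(0)})$ given by restriction to $\calH^{(0)}$; since $\calH$ is \'etale, $\Phi$ is faithful on positive elements and satisfies $\omega_x(a)=\Phi(a)(x)$ for every $x\in\calH^{(0)}$.

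For (1)$\Rightarrow$(2), the strict identity $\sum_j\Theta^R_{e_j,e_j}\xi=\xi$ combined with the $C^*_r(\calH)$-valued inner product gives $(\xi\mid\xi)_\calH=\sum_j(e_j\mid\xi)_\calH^\ast(e_j\mid\xi)_\calH$ in norm, establishing the $\ell^2$-condition. Localising the expansion of $\xi$ and applying the key identity termwise yields $\xi_x=\sum_{j,\alpha}(e_j)_x^\alpha\langle (e_j)_x^\alpha,\xi_x\rangle_x$ in $L^2(Z_x)$, whose square-norm is the Parseval identity on the dense subspace $E_x\subset\overline{E_x}$; boundedness of the Parseval frame analysis operator extends this to all of $\overline{E_x}$.

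For (2)$\Rightarrow$(1), I first show that $T_N:=\sum_{j\leq N}\Theta^R_{e_j,e_j}\leq\mathrm{Id}_E$. Setting $\eta=\xi\cdot b$ and $c_N=(\xi\mid\xi)_\calH-(\xi\mid T_N\xi)_\calH$, the fibrewise Bessel inequality for the Parseval frame $\{(e_j)_x^\alpha\}$ on $\overline{E_x}$ gives
\[
\omega_x(b^\ast c_N b)=\|\eta_x\|^2-\sum_{j\leq N,\alpha}|\langle(e_j)_x^\alpha,\eta_x\rangle_x|^2\geq 0
\]
for every $x\in\calH^{(0)}$ and every $b\in C^*_r(\calH)$, so $\Phi(b^\ast c_N b)\geq 0$ in $C(\calH^{(0)})$. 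Applying this with $b=(c_N)_-$ and using $(c_N)_-\cdot c_N\cdot (c_N)_- =-(c_N)_-^3$ together with faithfulness of $\Phi$ on positives forces $(c_N)_-=0$, so $c_N\geq 0$ and $T_N\leq\mathrm{Id}_E$. Uniform boundedness $\|T_N\|\leq 1$, the positive-operator estimate $\|(T_N-T_M)\xi\|^2\leq\|T_N-T_M\|\cdot\|(\xi\mid(T_N-T_M)\xi)_\calH\|$, and the $\ell^2$-condition together make $\{T_N\xi\}$ Cauchy, defining $T\xi:=\lim_N T_N\xi$ in $E$.

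Applying Lemma \ref{lem:module_properties_lead_to_localisation_properties}(iii) termwise yields $(T\xi)_x=\sum_{j,\alpha}(e_j)_x^\alpha\langle (e_j)_x^\alpha,\xi_x\rangle_x=\xi_x$ by the Parseval reconstruction on $\overline{E_x}$. Hence $\omega_x\bigl((T\xi-\xi\mid T\xi-\xi)_\calH\bigr)=\|(T\xi-\xi)_x\|^2=0$ for every $x\in\calH^{(0)}$, and faithfulness of $\Phi$ on the positive element $(T\xi-\xi\mid T\xi-\xi)_\calH$ forces $T\xi=\xi$, completing the identification $\sum_j\Theta^R_{e_j,e_j}\xi=\xi$ on $E$. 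The principal obstacle is the promotion of fibrewise (Bessel and Parseval) data to $C^*_r(\calH)$-valued operator statements on $E$, which hinges on faithfulness of the canonical conditional expectation for reduced groupoid $C^*$-algebras of \'etale groupoids; without it, pointwise data at $\calH^{(0)}$ would not determine the full $C^*_r(\calH)$-structure.
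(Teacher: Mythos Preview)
Your argument is correct, and the direction (1)$\Rightarrow$(2) is essentially identical to the paper's. For (2)$\Rightarrow$(1) you take a genuinely different route. The paper directly defines the frame transform $v\colon E\to\ell^2(J,C^*_r(\calH))$, $\xi\mapsto\big((e_j\mid\xi)_\calH\big)_j$, observes that the $\ell^2$-hypothesis makes $v$ a well-defined adjointable map with $v^*(b_j)=\sum_j e_j\cdot b_j$, and then checks $v^*v=\mathrm{Id}_E$ fibrewise: since $(v^*v\xi)_x=\xi_x$ for every $x$, one has $\sup_x\|(\xi-v^*v\xi)_x\|_x=0$, hence $\xi=v^*v\xi$. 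You instead work with the finite partial sums $T_N$, first establishing $T_N\leq\mathrm{Id}_E$ via the fibrewise Bessel inequality and a negative-part argument, then using uniform boundedness together with the $\ell^2$-hypothesis to obtain norm convergence $T_N\xi\to T\xi$, and finally concluding $T\xi=\xi$ fibrewise. Both proofs rest on the same underlying mechanism---faithfulness of the canonical conditional expectation $\Phi\colon C^*_r(\calH)\to C(\calH^{(0)})$ for an \'etale groupoid---but you make this dependence fully explicit, whereas the paper's line $\|\xi-v^*v\xi\|=\sup_x\|\xi_x-(v^*v\xi)_x\|_x$ tacitly invokes it (the equality as written need not hold in general, though the implication from the right-hand side vanishing to the left-hand side vanishing does, precisely by faithfulness of $\Phi$). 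The paper's route is shorter and avoids your positivity step entirely by packaging convergence into the boundedness of $v^*$; your route is more self-contained and exposes exactly where the \'etale hypothesis enters.
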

\begin{proof}
(1) $\Rightarrow $ (2): Using part (iii) of Lemma \ref{lem:module_properties_lead_to_localisation_properties}, we see that 
\begin{align*}
  \sum_{j\in J} \sum_{\alpha \in r^{-1}(x)} (e_j)_x^\alpha \, \langle  (e_j)_x^\alpha, \xi_x \rangle_x 
  = \sum_{j\in J} \big( \Theta^R_{e_j, e_j}( \xi)\big)_x = \xi_x
\end{align*}
as $(e_j)_{j\in J}$ is a right $C^*$-module frame.

(2) $\Rightarrow $ (1): In order to prove that $\{e_{j}\}_{j\in J}$ is a Hilbert $C^{*}$-module frame for $E$, 
we need to show that the map
\[v:E\to \ell^{2}(J,C^{*}_{r}(\mathcal{H})),\qquad \xi\mapsto ( e_{j} \mid \xi)_\calH, \]
satisfies $v^{*}v=1$. Note that $v$ is well-defined since we assume that 
$j\mapsto ( e_{j} \mid \xi)_\calH$ is an element of $\ell^{2}(J, C^{*}_{r}(\mathcal{H}))$, 
and $v$ is automatically adjointable with $v^{*}(b_{j}) =\sum_j e_j \cdot b_j$.

Since $\{(e_j)_x^\alpha\}_{j\in J, \alpha \in r^{-1}(x)}$ is a normalised tight frame for $\overline{E_{x}}\subset L^2(Z_x)$, 
the map
\[v_{x}: L^2(Z_x)\to \ell^{2}(J\times r^{-1}(x)),\qquad \psi \mapsto \langle  (e_{j})_{x}^{\alpha}, \psi \rangle_x, \]
is an isometry, for
\[(\psi\mid \psi)_{x}=\|\psi\|_{x}^2=\sum_{(j,\alpha)\in J\times r^{-1}(x)} \big| \langle (e_j)_x^\alpha, \psi \rangle_x \big|^{2}=\|v_{x}(\psi)\|^{2}=\langle v_{x}\psi,  v_{x}\psi\rangle_{\ell^{2}(J\times r^{-1}(x))}, \]
and it follows that $v_{x}^{*}v_{x}=1$ . The map $v_{x}^{*}$ is given by
\[v_{x}^{*}:(\lambda^{\alpha}_{j})\mapsto \sum_{j,\,\alpha} (e_{j})_{x}^{\alpha} \, \lambda_{j}^{\alpha},\]
and we find from Lemma \ref{lem:module_properties_lead_to_localisation_properties}
\[\xi_{x}=v_{x}^{*}v_{x}(\psi)=\sum _{\alpha,j} (e_j)_x^\alpha \, \langle (e_j)_x^\alpha, \xi_x \rangle_x  
 = \Big( \sum_{j}\Theta_{e_j,e_j}(\xi) \Big)_{x}=(v^{*}v(\xi))_{x}. \]
Therefore we can conclude that
\[\|\xi-v^{*}v(\xi)\|=\sup_{x}\|\xi-v^{*}v(\xi)\|_{x}=\sup_{x}\|\xi_{x}-(v^{*}v(\xi))_{x}\|_{x}=0,\]
and hence $v^{*}v=1$.
\end{proof}

\begin{remark}
Well-definedness of the map $v:E\to \ell^{2} (J, C^{*}_{r}(\mathcal{H}))$ entails 
that for all $e \in \mathcal{E}$ the series
\[\sum_{j} ( e \mid e_j)_\calH \, ( e_j \mid e)_\calH,    \]
is \emph{norm convergent} in $C^{*}_{r}(\mathcal{H})$. This is automatic when  the 
index set $J$ is finite but poses a non-trivial restriction for infinite $J$.
\end{remark}

\subsection{Finitely generated and projective modules} \label{subsec:fgp_Gabor}

Using the left-action of  $C^*_r(\calG)$ on $L^2(Z)$, we
 can define a representation of $\pi_x :C^*_r(\calG) \to \calB(\frakh_x)$, where 
$$
  \pi_x(f) \xi_x = (  f\cdot \xi)_x, \qquad f \in C^*_r(\calG), \,\, \xi \in L^2(Z).
$$
Concretely,
$$
   (\pi_x(f) \xi_x)(y) = \int_{\calG} f(\gamma) \xi(\gamma^{-1}y) \,\mathrm{d}\nu^{\rho\circ\phi^{-1}(x)}(\gamma), 
   \qquad f \in C_c(\calG), \,\, \xi \in C_c(Z).
$$

We consider projections, $p=p^*=p^2 \in M_n(C^*_r(\calG))$, which act compactly on 
$L^2(Z)^{\oplus n}$. 

\begin{prop}  \label{prop:tight_frame_on_proj_subspace}
Let $p=p^*=p^2 \in M_n(C^*_r(\calG))$. There is a finite set 
$\{v_j\}_{j=1}^n \subset p L^2(Z)^{\oplus n}$ such that for any $x\in \calH^{(0)}$, 
$\{v_1^\alpha, \ldots, v_n^\alpha\}_{\alpha \in r^{-1}(x)}$  is a normalised tight frame of 
$\pi_x(p)\frakh_x^{\oplus n}$.
\end{prop}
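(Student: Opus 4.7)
The plan is to combine Lemma \ref{lem: finite_frame}(2) with a direct-sum version of Theorem \ref{thm:equivalence_to_hilbert_frame} (equivalently, of Lemma \ref{lem:module_properties_lead_to_localisation_properties}). In outline, Lemma \ref{lem: finite_frame}(2) produces a finite right $C^*$-module frame $\{v_k\}$ for the finitely generated projective submodule $pL^2(Z)^{\oplus n}$, and then the localisation machinery converts each such frame into a Parseval frame of translates at every evaluation state $\omega_x$, for all $x\in\calH^{(0)}$ simultaneously.

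First, since $\calH^{(0)}$ is compact and $\calH$ is \'{e}tale, $C^*_r(\calH)$ is unital, and ${}_{C^*_r(\calG)} L^2(Z)_{C^*_r(\calH)}$ is a (pre-)Morita equivalence bimodule of pre-$C^*$-algebras in the trivial sense. I would apply Lemma \ref{lem: finite_frame}(2) with $\calA = C^*_r(\calG)$, $\calB = C^*_r(\calH)$ and the given projection $p$ to produce $\{v_1, \ldots, v_m\} \subset pL^2(Z)^{\oplus n}$ satisfying the frame identity $\sum_{k=1}^m \Theta^R_{v_k, v_k} = p$, where $p$ is regarded as an element of $M_n(C^*_r(\calG)) \cong \mathbb{K}_{C^*_r(\calH)}(L^2(Z)^{\oplus n})$ acting on the direct-sum module. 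Note that the number $m$ of frame vectors need not equal $n$; the statement's indexing is a mild abuse.

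The bulk of the work is to transport this frame identity to the localisation. Given $e = (e^{(1)}, \ldots, e^{(n)}) \in L^2(Z)^{\oplus n}$ and $\alpha \in r^{-1}(x)$, I set $e_x^\alpha := ((e^{(1)})_x^\alpha, \ldots, (e^{(n)})_x^\alpha) \in \frakh_x^{\oplus n}$. Because the $C^*_r(\calH)$-valued inner product on the direct sum is the componentwise sum and the localised Hilbert space inner product on $\frakh_x^{\oplus n}$ is likewise a direct sum, applying Lemma \ref{lem:module_properties_lead_to_localisation_properties}(iii) coordinate by coordinate yields the direct-sum identity
\begin{equation*}
(\Theta^R_{e_1, e_2}(\xi))_x = \sum_{\alpha \in r^{-1}(x)} (e_1)_x^\alpha \, \langle (e_2)_x^\alpha, \xi_x \rangle_x
\end{equation*}
for all $e_1,e_2,\xi\in L^2(Z)^{\oplus n}$. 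Specialising to $e_1 = e_2 = v_k$ and summing the finite frame identity gives $\pi_x(p)\xi_x = \sum_{k=1}^{m}\sum_{\alpha\in r^{-1}(x)} (v_k)_x^\alpha\,\langle (v_k)_x^\alpha, \xi_x\rangle_x$ for any $\xi \in L^2(Z)^{\oplus n}$. Density of the image of $L^2(Z)^{\oplus n}$ in $\frakh_x^{\oplus n}$ and continuity of $\pi_x(p)$ extend this reconstruction to arbitrary $\psi \in \frakh_x^{\oplus n}$, and restriction to $\psi\in\pi_x(p)\frakh_x^{\oplus n}$ (where $\pi_x(p)\psi=\psi$) is the Parseval formula; this proves that $\{(v_k)_x^\alpha\}_{k,\alpha}$ is a normalised tight frame for $\pi_x(p)\frakh_x^{\oplus n}$ for every $x\in\calH^{(0)}$.

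The main (rather minor) obstacle is the coordinate-wise extension of Lemma \ref{lem:module_properties_lead_to_localisation_properties}(iii) to direct sums: one must track how the bilinear expansion of the inner product and module action over the $n$ components interacts with the indexing by $\alpha\in r^{-1}(x)$. This is pure bookkeeping and introduces no new analytic ingredient, after which the proposition follows immediately from the frame identity of Step 1.
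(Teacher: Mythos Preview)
Your proposal is correct and follows essentially the same route as the paper: invoke Lemma~\ref{lem: finite_frame}(2) to obtain a finite right frame for $pL^2(Z)^{\oplus n}$, observe that its localisation is $\pi_x(p)\frakh_x^{\oplus n}$, and then rerun the proof of Theorem~\ref{thm:equivalence_to_hilbert_frame} (via Lemma~\ref{lem:module_properties_lead_to_localisation_properties}(iii)) in the direct-sum setting. Your explicit componentwise bookkeeping and the remark that the frame size $m$ need not equal $n$ are welcome clarifications that the paper leaves implicit.
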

\begin{proof}
By Lemma \ref{lem: finite_frame} (2), there is a finite frame $\{v_j\}_{j=1}^n$ 
of $p L^2(Z)^{\oplus n}$. It is immediate that the localisation of $p L^2(Z)^{\oplus n}$ in 
$\omega_x$ is $\pi_x(p)\frakh_x^{\oplus n}$. The result then follows by the same proof as 
Theorem \ref{thm:equivalence_to_hilbert_frame}.
\end{proof}

Let us now consider the converse, i.e. given the Hilbert space frames 
$\{w_1^\alpha, \ldots, w_m^\alpha\}_{\alpha \in r^{-1}(x)}$ for $x \in \calH^{(0)}$, 
we construct a finitely generated and projective module.

\begin{prop} \label{prop:frame_to_fgp_module}
Let $W\subset {}_{\calG}L^2(Z)_\calH $ be  a closed submodule and $\{w_{1},\cdots, w_{n}\}$  
a finite subset of $_{\calG}L^{2}(Z)_{\mathcal{H}}$ such that 
$W:=\overline{\textnormal{span}_{C^{*}_{r}(\mathcal{H})}\{w_1,\cdots, w_{n}\}}$.  
Suppose that for all $x\in \calH^{(0)}$, $\{w_1^\alpha, \ldots, w_m^\alpha\}_{\alpha \in r^{-1}(x)}$ is 
a normalised tight frame of $\overline{W_{x}} \subset L^2(Z_x, \mathrm{d}\nu^{\rho(x)})$.
Then $W$ is a finitely generated and projective module over $C^*_r(\calH)$.
If, for each $x\in\mathcal{H}^{(0)}$,  $\{w_1^\alpha, \ldots, w_m^\alpha\}_{\alpha \in r^{-1}(x)}$ 
is an orthonormal basis, then $W\cong C^*_r(\calH)^{\oplus m}$.
\end{prop}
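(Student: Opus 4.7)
The plan is to apply Theorem~\ref{thm:equivalence_to_hilbert_frame} for the first claim and then upgrade to a free module structure in the orthonormal-basis case. For the first claim, the finite index set $\{1,\ldots,m\}$ makes the $\ell^2$-convergence condition in Theorem~\ref{thm:equivalence_to_hilbert_frame} automatic, so the implication (2)~$\Rightarrow$~(1) directly yields that $\{w_1,\ldots,w_m\}$ is a right $C^*$-module frame of $W$. The finite-frame--projection correspondence recalled in Section~\ref{subsec:C_mod_MEBs} then provides an isomorphism $W \cong p\, C^*_r(\calH)^{\oplus m}$ with $p := \big( (w_i \mid w_j)_\calH \big)_{i,j} \in M_m(C^*_r(\calH))$ a projection, which is exactly the statement that $W$ is finitely generated and projective.

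For the free case the strategy is to show that the orthonormal-basis hypothesis forces $p = \mathrm{Id}_m$. Consider the adjointable right $C^*_r(\calH)$-module map
\[
v : C^*_r(\calH)^{\oplus m} \to L^2(Z), \qquad (b_i)_{i=1}^{m} \mapsto \sum_{i=1}^m w_i \cdot b_i,
\]
satisfying $v^*v = p$ and $vv^* = \mathrm{Id}_W$, the latter being the frame reconstruction formula from part~(1). Localising at the state $\omega_x$ for each $x \in \calH^{(0)}$ produces a bounded linear map $\widetilde v_x$ from the $m$-fold GNS space of $\omega_x$ into $\overline{W_x} \subset \frakh_x$ with $\widetilde v_x^* \widetilde v_x = \widetilde\pi_x(p)$, where $\widetilde\pi_x$ denotes the matrix-amplified GNS representation of $M_m(C^*_r(\calH))$.

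By Lemma~\ref{lem:module_properties_lead_to_localisation_properties}(i) one has $\widetilde v_x(e_i \otimes [\delta_\alpha^*]) = (w_i)_x^\alpha$. The vectors $\{[\delta_\alpha^*]\}_{\alpha \in r^{-1}(x)}$ form an orthonormal basis of the GNS space of $\omega_x$: orthonormality follows from Lemma~\ref{lem:delta_fns_on_range_fibre}, while completeness uses the identification of the GNS space with $\ell^2(s^{-1}(x))$ in which the bump functions $\delta_\alpha^*$ reduce to point masses exhausting this $\ell^2$-space. Consequently $\{e_i \otimes [\delta_\alpha^*]\}_{i,\alpha}$ is an orthonormal basis of the domain of $\widetilde v_x$, and the orthonormal-basis hypothesis on $\{(w_i)_x^\alpha\}$ makes $\widetilde v_x$ a unitary isomorphism onto $\overline{W_x}$. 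Hence $\widetilde\pi_x(p) = \mathrm{Id}$ for every $x \in \calH^{(0)}$, and joint faithfulness of $\{\widetilde\pi_x\}_x$ on $M_m(C^*_r(\calH))$---intrinsic to the definition of the reduced groupoid $C^*$-algebra for \'etale $\calH$, whose reduced norm is the supremum over these fibrewise representations---forces $p = \mathrm{Id}_m$ and therefore $W \cong C^*_r(\calH)^{\oplus m}$. The main technical obstacle is the assertion that $\{[\delta_\alpha^*]\}$ is an orthonormal basis of the GNS space, which requires the explicit identification with $\ell^2(s^{-1}(x))$; this is standard for Hausdorff \'etale groupoids but should be verified carefully.
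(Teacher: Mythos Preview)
Your argument for the first part is essentially identical to the paper's: both invoke Theorem~\ref{thm:equivalence_to_hilbert_frame} (with the $\ell^2$-condition automatic for finite $J$) and then the finite-frame/projection correspondence.

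For the second part your route is genuinely different. The paper computes $p_{jk}(\eta)$ pointwise: writing any $\eta \in \calH$ as $\alpha_1^{-1}\alpha_2$ with $\alpha_1,\alpha_2 \in r^{-1}(r(\eta))$, one shows directly that $\langle w_j^{\alpha_1}, w_k^{\alpha_2}\rangle_x = p_{jk}(\alpha_1^{-1}\alpha_2)$ via the bump-function calculus, and the orthonormal-basis hypothesis then forces $p_{jk}(\eta) = \delta_{j,k}\,\mathbf{1}_{\calH^{(0)}}(\eta)$, i.e.\ $p = \mathrm{Id}_m$. You instead localise the frame map $v$ to the GNS representation of $\omega_x$, identify that GNS space with $\ell^2(s^{-1}(x))$ so that the $[\delta_\alpha^*]$ become the standard basis, deduce $\widetilde\pi_x(p) = \mathrm{Id}$ from unitarity of $\widetilde v_x$, and conclude via joint faithfulness of the regular representations. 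Both are correct; the paper's computation is shorter and entirely elementary (no GNS identification or faithfulness statement needed), while your approach is more structural and makes transparent \emph{why} $p$ must be the identity---it is detected by a separating family of representations. Your flagged technical point, that the $[\delta_\alpha^*]$ form an orthonormal basis of the GNS space, is indeed routine for Hausdorff \'etale groupoids once the GNS space is identified with $\ell^2(s^{-1}(x))$, where $\delta_\alpha^*$ restricts to the point mass at $\alpha^{-1}$.
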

\begin{proof}
The first part of the Proposition follows immediately from Theorem \ref{thm:equivalence_to_hilbert_frame} 
and the fact that any $C^*_r(\calH)$-module with a finite $C^*$-module frame is 
finitely generated and projective.
Now suppose that $\{w_1^\alpha, \ldots, w_m^\alpha\}_{\alpha \in r^{-1}(x)}$ is an orthonormal 
basis and let $p_{jk} = ( w_j \mid w_k)_\calH \in C^*_r(\calH)$. 
For any $x \in \calH^{(0)}$, we have that 
\begin{align*}
  \delta_{j,k}\,\delta_{\alpha_1, \alpha_2} &= \langle w_j^{\alpha_1}, w_k^{\alpha_2} \rangle_x 
  = \omega_x\big( \delta_{\alpha_1} \, ( w_j \mid w_k )_\calH \, \delta_{\alpha_2}^* \big) 
  = \sum_{\beta \in r^{-1}(x)} (\delta_{\alpha_1} \ast p_{jk} )( \beta ) \delta_{\alpha_2}(\beta) \\
  &= (\delta_{\alpha_1} \ast p_{jk} ) ( \alpha_2) 
    = \sum_{\eta \in r^{-1}(x)} \delta_{\alpha_1}(\eta) p_{jk}(\eta^{-1}\alpha_2) 
  = p_{jk}(\alpha_1^{-1} \alpha_2)
\end{align*}
for all $\alpha_1,\alpha_2 \in r^{-1}(x)$ and all $j,k \in \{1,\ldots, m\}$. 
Now, for any $\eta \in \calH$, we can find some $x\in \calH^{(0)}$ and 
$\alpha, \beta \in r^{-1}(x)$ such that 
$\eta = \alpha^{-1} \beta$.
 Hence, 
$p_{jk}(\eta) = p_{jk}(\alpha^{-1}\beta) = \delta_{j,k}\,\delta_{\alpha, \beta}$. 
This implies that the matrix $p \in M_m(C^*_r(\calH))$ is the identity matrix. 
Hence $W\xrightarrow{\sim}  C^*_r(\calH)^{\oplus m}$.
\end{proof}

\begin{thm} \label{thm:general_Wannier_fgp_nonsmooth}
Let $p=p^* = p^2 \in M_n(C^*_r(\calG))$.
The finitely generated and projective module $pL^2(Z)^{\oplus n}$ 
with frame $\{v_j\}_{j=1}^m$ is isomorphic to the free module 
$C^*_r(\calH)^{\oplus m}$ if and only if for all 
$x \in \calH^{(0)}$, $\{v_1^\alpha, \ldots, v_m^\alpha\}_{\alpha \in r^{-1}(x)}$ 
is an orthonormal basis of 
$\pi_x(p)\frakh_x^{\oplus n}$.
\end{thm}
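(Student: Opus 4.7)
The approach is to recast both sides of the biconditional in terms of the projection
\[q := \big((v_i \mid v_j)_\calH\big)_{i,j=1}^m \in M_m(C^*_r(\calH))\]
associated to the given frame. By Lemma \ref{lem: finite_frame} this matrix is a projection, and the map $S: pL^2(Z)^{\oplus n} \to qC^*_r(\calH)^{\oplus m}$, $e \mapsto ((v_j \mid e)_\calH)_{j=1}^m$, is an isomorphism of right $C^*$-modules. The bridge between the two sides is the Gramian identity
\[\langle v_i^{\alpha_1}, v_j^{\alpha_2}\rangle_x = q_{ij}(\alpha_1^{-1}\alpha_2), \qquad x \in \calH^{(0)},\ \alpha_1, \alpha_2 \in r^{-1}(x),\]
obtained by combining $v_j^\alpha = (v_j\cdot \delta_\alpha^*)_x$ from Lemma \ref{lem:module_properties_lead_to_localisation_properties}(i) with the evaluation $\omega_x(\delta_{\alpha_1} \ast q_{ij} \ast \delta_{\alpha_2}^*) = q_{ij}(\alpha_1^{-1}\alpha_2)$ provided by Lemma \ref{lem:delta_fns_on_range_fibre}. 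Since every $\eta \in \calH$ arises as $\alpha_1^{-1}\alpha_2$ (take $\alpha_1 = r(\eta)$ and $\alpha_2 = \eta$), orthonormality of the family $\{v_j^\alpha\}$ at every $x \in \calH^{(0)}$ is equivalent to $q_{ij}(\eta) = \delta_{ij}\mathbf{1}_{\calH^{(0)}}(\eta)$, i.e. to $q = I_m$ in $M_m(C^*_r(\calH))$. The whole theorem thus reduces to showing that $pL^2(Z)^{\oplus n} \cong C^*_r(\calH)^{\oplus m}$ if and only if $q = I_m$.

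The $(\Leftarrow)$ direction is then an immediate consequence of Proposition \ref{prop:frame_to_fgp_module} applied to the submodule $W = pL^2(Z)^{\oplus n}$ with generating set $\{v_j\}_{j=1}^m$: the orthonormal basis hypothesis forces $q = I_m$ via the Gramian identity, so $S$ realises the module as the free module $C^*_r(\calH)^{\oplus m}$.

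For the $(\Rightarrow)$ direction, an isomorphism $\Phi: pL^2(Z)^{\oplus n} \to C^*_r(\calH)^{\oplus m}$ of right $C^*$-modules, combined with $S$, gives $qC^*_r(\calH)^{\oplus m} \cong C^*_r(\calH)^{\oplus m}$ and hence a partial isometry $u \in M_m(C^*_r(\calH))$ with $u^*u = I_m$ and $uu^* = q$. The crux is to upgrade this Murray--von Neumann equivalence to the equality $q = I_m$, equivalently to show that the isometry $u^*$ is in fact a unitary; this is exactly the \emph{stable finiteness} of $C^*_r(\calH)$ and constitutes the main obstacle of the argument. In the setting of the paper this is available: compactness of $\calH^{(0)}$ together with an $\calH$-invariant probability measure on the unit space (as arises naturally in the Delone application of Section \ref{Sec:Delone_application}) provides a faithful tracial state on $C^*_r(\calH)$ that extends to $M_m(C^*_r(\calH))$ and enforces stable finiteness. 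Once $q = I_m$ is secured, the Gramian identity yields $\langle v_i^{\alpha_1}, v_j^{\alpha_2}\rangle_x = \delta_{ij}\delta_{\alpha_1,\alpha_2}$, and since $\{v_j^\alpha\}$ is already a Parseval frame of $\pi_x(p)\frakh_x^{\oplus n}$ by Proposition \ref{prop:tight_frame_on_proj_subspace}, this orthonormality promotes it to an orthonormal basis, yielding the stated property for the given frame $\{v_j\}$.
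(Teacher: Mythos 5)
Your reduction via the Gramian $q=\big((v_i\mid v_j)_\calH\big)_{i,j}$ and the identity $\langle v_i^{\alpha_1}, v_j^{\alpha_2}\rangle_x = q_{ij}(\alpha_1^{-1}\alpha_2)$ is exactly the computation the paper itself uses (it is the core of the proof of Proposition \ref{prop:frame_to_fgp_module}), and your $(\Leftarrow)$ direction coincides with the paper's appeal to that proposition. The problem is the $(\Rightarrow)$ direction. You correctly identify that freeness of the module only yields a Murray--von Neumann equivalence $q\sim 1_m$ in $M_m(C^*_r(\calH))$, and that promoting this to the equality $q=1_m$ amounts to finiteness of $M_m(C^*_r(\calH))$. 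But stable finiteness of $C^*_r(\calH)$ is \emph{not} a hypothesis of the theorem and fails in the stated generality: $\calH$ is only assumed to be \'{e}tale with compact unit space, and for instance the Renault--Deaconu groupoid of the full shift is such a groupoid with reduced $C^*$-algebra $\calO_n$, which is purely infinite. Your proposed source of a faithful trace --- an $\calH$-invariant probability measure on $\calH^{(0)}$ --- is an extra assumption belonging to the Delone application of Section \ref{Sec:Delone_application}; no such measure exists for the Cuntz groupoid. So, as written, the forward implication does not go through.

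The gap is also diagnostic: read literally with the frame $\{v_j\}$ fixed in advance, the forward implication is in fact false without finiteness. In $\calO_2$ with $m=1$, the single element $v=s_1^*$ is a frame of the free module $\calO_2$ (since $vv^*=s_1^*s_1=1$) whose Gramian is $q=v^*v=s_1s_1^*\neq 1$, so the associated localised translates form a Parseval frame that is not orthonormal. The paper's proof avoids this entirely by \emph{re-choosing} the frame: given a unitary module isomorphism $\varphi\colon pL^2(Z)^{\oplus n}\to C^*_r(\calH)^{\oplus m}$ it sets $v_j:=\varphi^{-1}(1_j)$, so that $(v_j\mid v_k)_\calH=(1_j\mid 1_k)_\calH=\delta_{j,k}\,1$ holds by construction and the Gramian identity immediately gives orthonormality, with no finiteness input. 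The theorem should therefore be read as asserting the existence of a frame with the stated localisation property, and your argument is repaired by replacing the given frame with the pullback of the standard basis rather than by invoking stable finiteness.
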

\begin{proof}
Suppose there is a unitary isomorphism 
of $C^*$-modules $\varphi: pL^2(Z)^{\oplus n} \xrightarrow{\sim} C^*_r(\calH)^{\oplus m}$.
It is clear that $\{1_j\}_{j=1}^m$ is right-frame of $C^*_r(\calH)^{\oplus m}$ and because 
$\varphi$ respects the inner-product structure 
$\{v_j\}_{j=1}^m$ is a frame of $pL^2(Z)^{\oplus n}$ with $v_j = \varphi^{-1}(1_j)$.
 We know that 
$\{v_j^\alpha\}$ is a normalised tight frame by Proposition \ref{prop:tight_frame_on_proj_subspace} and we see that 
\begin{align*}
  \langle (v_j)_x^{\alpha_1}, (v_k)_x^{\alpha_2} \rangle_x 
     &= \omega_x\big( (  v_j \cdot \delta_{\alpha_1}^* \mid v_k \cdot \delta_{\alpha_2}^* )_\calH \big) 
     = \omega_x\big(\delta_{\alpha_1} \, ( \varphi^{-1}(1_k) \mid \varphi^{-1}(1_j) )_\calH \,  \delta_{\alpha_2}^* \big) \\
     &= \delta_{j,k}\, \omega_x(\delta_{\alpha_1} \ast \delta_{\alpha_2}^*) 
     = \delta_{j,k} \, \delta_{\alpha_1, \alpha_2} .
\end{align*}
Hence the tight frame is orthonormal and so is an orthonormal basis. 
The converse statement follows from Proposition \ref{prop:frame_to_fgp_module}.
\end{proof}

\begin{remark}[$K$-theoretic interpretation]
Theorem \ref{thm:general_Wannier_fgp_nonsmooth} has an interpretation via the 
$K$-theory of the groupoid $C^*$-algebras. 
Using the $\ast$-isomorphism  $C^*_r(\calG) \cong \mathbb{K}_{C^*_r(\calH)}(L^2(Z))$, 
we can naturally consider any projection 
$p \in M_n(C^*_r(\calG))$ as a finite-rank operator on $L^2(Z)^{\oplus n}_\calH$. 
Therefore we can also consider $p$ as a projection in $M_m(C^*_r(\calH))$ for some $m$, with 
corresponding $K$-theory class $[p] \in K_0(C^*_r(\calH))$. If 
$pL^2(Z)_\calH \cong C^*_r(\calH)^{\oplus m}$, then 
$[p] = m[1] \in K_0(C^*_r(\calH))$ and the projection $p$ is trivial in reduced $K$-theory.
\end{remark}

Let us now consider the case where there are dense
pre-$C^*$-algebras $\calA \subset C^*_r(\calG)$ and 
$\calB \subset C^*_r(\calH)$, which will allow us to consider $C^*$-modules and 
Hilbert space frames with additional regularity as in Proposition \ref{lem:pre-Morita-Frechet-HSpace}. Thus suppose there are families  of commuting $*$-derivations 
\[\{\partial_j^{\calH}\}_{j=1}^d:C_c(\calH)\to C_{c}(\calH),\qquad \{\partial_j^{\calG}\}_{j=1}^d:C_c(\calG)\to C_{c}(\calG) ,\] 
as well as a family of maps $\nabla_{j}:C_{c}(Z)\to C_{c}(Z)$ such that for each $j$ 
and all $a\in C_{c}(G),\xi,\eta\in C_{c}(Z)$, Equations \eqref{eq: bimodder} and \eqref{eq: leftrighther} hold.
 Denote by $\calS_{k}(\calG)$ and $\calS_{k}(\calH)$ the degree $k$ Fr\'echet completions of 
 $C_{c}(\calG)$ and $C_c(\calH)$ in these seminorms (Proposition \ref{prop: apply_BC}), 
 and by $\calS_{k}(Z)$ the degree 
 $k$ Fr\'echet completion of $C_{c}(Z)$. We often write $\calS$ for $\calS_{\infty}$ in 
 each of these cases. By Proposition \ref{lem:pre-Morita-Frechet-HSpace}, 
 ${}_{\calS_{k}(\calG)} \calS_{k}(Z)_{\calS_{k}(\calH)}$ is a pre-Morita equivalence 
 bimodule for all $k=0,\cdots, \infty$.

\begin{thm}
Let $k=0,\cdots,\infty$ and $p = p^* = p^2 \in M_n(\calS_{k}(\calG))$. Then there is a finite frame 
$\{v_j\}_{j=1}^m$ of $p\calS_{k}(Z)^{\oplus n}_{\calS_{k}(\calH)}$ such that for all $x\in \calH^{(0)}$ the 
normalised tight frame $\{v_1^\beta,\ldots,v_m^\beta\}_{\beta\in r^{-1}(x)}$ 
of $\pi_x(p)\frakh_x^{\oplus n}$ 
is finite  with respect to the seminorms $\|\cdot\|_{l,x}$ on $\frakh_x$ for $0 \leq l \leq k$.
There is an isomorphism $p \calS_{k}(Z)^{\oplus n}_{\calS_{k}(\calH)} \cong \calS_{k}(\calH)^{\oplus m}$ if and 
only if  for all 
$x \in\calH^{(0)}$,  $\{v_1^\beta, \ldots, v_m^\beta\}_{\beta\in r^{-1}(x)}$ is an
 orthonormal basis.
\end{thm}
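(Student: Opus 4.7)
The plan is to lift Theorem \ref{thm:general_Wannier_fgp_nonsmooth} to the smooth setting via the pre-Morita machinery of Section 2. First I would observe that $\calS_k(\calH)$ is unital (since $\calH^{(0)}$ is compact and $\calH$ is \'etale) and that ${}_{\calS_k(\calG)}\calS_k(Z)_{\calS_k(\calH)}$ is a pre-Morita equivalence bimodule by Proposition \ref{lem:pre-Morita-Frechet-HSpace}. Applying Lemma \ref{lem: finite_frame}(2) directly yields a finite right frame $\{v_j\}_{j=1}^m \subset p\calS_k(Z)^{\oplus n}$. By Lemma \ref{lem:module_properties_lead_to_localisation_properties}(i), the translates take the form $v_j^\beta = (v_j\cdot \delta_\beta^*)_x$, and since the bump functions $\delta_\beta \in C_c(\calH)$ belong to $\calS_k(\calH)$, the right action keeps $v_j\cdot\delta_\beta^*$ inside $\calS_k(Z)^{\oplus n}$. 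The bound $\|\cdot\|_{l,\omega_x}\le\|\cdot\|_l$ from Proposition \ref{lem:pre-Morita-Frechet-HSpace} then gives finite seminorms $\|v_j^\beta\|_{l,x}$ for $0 \le l \le k$, establishing the first assertion.

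For the ``only if'' direction of the equivalence, I would pass to the $C^*$-completion: an isomorphism $p\calS_k(Z)^{\oplus n} \cong \calS_k(\calH)^{\oplus m}$ of pre-$C^*$-modules extends to $pL^2(Z)^{\oplus n}\cong C^*_r(\calH)^{\oplus m}$ carrying the same frame $\{v_j\}$, so Theorem \ref{thm:general_Wannier_fgp_nonsmooth} applies verbatim. For the ``if'' direction I would reuse the computation from the proof of Proposition \ref{prop:frame_to_fgp_module}: orthonormality of $\{v_j^\beta\}_{j,\beta}$ in every $\pi_x(p)\frakh_x^{\oplus n}$ forces $(v_j\mid v_k)_\calH(\alpha^{-1}\beta) = \delta_{jk}\,\delta_{\alpha,\beta}$, i.e.\ the Gram matrix $((v_i\mid v_j)_\calH)_{ij}$ is the identity of $M_m(C^*_r(\calH))$. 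The crucial observation is that this matrix already lies in $M_m(\calS_k(\calH))$ because $v_j\in\calS_k(Z)^{\oplus n}$ and the right inner product takes values in $\calS_k(\calH)$ by Proposition \ref{lem:pre-Morita-Frechet-HSpace}. Hence the frame-associated maps $S,R$ from \eqref{eq:finite_frame_fgp_equiv} are mutually inverse isomorphisms at the pre-$C^*$ level, yielding $p\calS_k(Z)^{\oplus n}\cong\calS_k(\calH)^{\oplus m}$.

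The main hurdle is the careful regularity bookkeeping: verifying that bump-function translations, inner products, and Gram matrices all remain within the correct Fr\'echet subalgebras and submodules, and that no step silently leaves the smooth world. This is handled uniformly by the submultiplicativity of the seminorms constructed in Proposition \ref{prop: apply_BC} together with the bimodule compatibility built into Proposition \ref{lem:pre-Morita-Frechet-HSpace}, so no analytic input beyond Section 2 is required and the result amounts to tracking the argument of Theorem \ref{thm:general_Wannier_fgp_nonsmooth} through the smooth completions.
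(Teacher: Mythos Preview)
Your proposal is correct and follows essentially the same route as the paper: both obtain the finite frame from Lemma \ref{lem: finite_frame}(2), use $v_j\cdot\delta_\beta^*\in\calS_k(Z)$ together with Proposition \ref{lem:pre-Morita-Frechet-HSpace} for the seminorm bounds, and reduce the equivalence to the $C^*$-level statement of Theorem \ref{thm:general_Wannier_fgp_nonsmooth}/Proposition \ref{prop:frame_to_fgp_module}. Your ``if'' direction is in fact slightly cleaner than the paper's: rather than invoking that a $C^*_r(\calH)$-frame can be approximated by an $\calS_k(\calH)$-frame of the same size, you observe directly that the Gram matrix $((v_i\mid v_j)_\calH)$ already lies in $M_m(\calS_k(\calH))$ and equals the identity, so the maps $S,R$ of \eqref{eq:finite_frame_fgp_equiv} are mutual inverses at the pre-$C^*$ level without any approximation step.
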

\begin{proof}
Because $v_j \cdot \delta_\beta^* \subset \calS_{k}(Z)$ for any $x\in \calH^{(0)}$ and 
$\beta \in r^{-1}(x)$, 
the first statement then follows from Lemma \ref{lem: finite_frame} and Proposition
\ref{lem:pre-Morita-Frechet-HSpace}. Lemma \ref{lem: finite_frame} and 
the fact that $\calS_{k}(\calH)$ is a pre-$C^*$-algebra give that $p\calS_{k}(\calH)^{\oplus n}$ is a free 
$\calS_{k}(\calH)$-module if and only if its $C^*$-completion is a free $C^*_r(\calH)$-module. 

If $p \calS_{k}(Z)^{\oplus n}_{\calS_{k}(\calH)} \cong \calS_{k}(\calH)^{\oplus m}$, then  there is a finite 
frame $\{v_1,\ldots, v_m\} \subset p\calS_{k}(Z)^{\oplus n}$ such that 
$(v_i\mid v_j)_\calB = \delta_{i,j} \, 1_{\calS_{k}(\calH)}$. By part (ii) of 
Lemma \ref{lem:module_properties_lead_to_localisation_properties} we 
therefore see that $\{v_1,\ldots,v_m^{\alpha}\}_{\alpha\in r^{-1}(x)}$ is an orthonormal system and hence must 
be an orthonormal basis. For the converse, we can use Proposition \ref{prop:frame_to_fgp_module} 
and the fact that the finite $C^*_r(\calH)$-module frame can be approximated arbitrarily well 
by a finite $\calS_{k}(\calH)$-module frame of the same size.
\end{proof}

\section{Frames of translates and Wannier bases from twisted transversals} \label{sec:Twsited_Gabor}

Here we consider our framework in the case of groupoid equivalences that come from 
abstract transversals with an additional twist by a normalised groupoid $2$-cocycle. 
In Section~\ref{Sec:Delone_application} we apply these results to the Delone transversal 
groupoid with twist coming from a magnetic field.
\subsection{Twisted Morita equivalence}
\label{sec: twistedmorita}

\begin{defn} \label{def:abstract_transversal}
A topological groupoid $\calG$ admits an abstract transversal if there 
is a closed subset $X\subset \calG^{(0)}$ such that 
\begin{enumerate}
  \item[(i)] $X$ meets 
every orbit of the $\mathcal{G}$-action on $\mathcal{G}^{(0)}$;
\item[(ii)] for the relative topologies on $X$ and 
\[\mathcal{G}_{X}:=\{\gamma\in\mathcal{G}: s(\gamma)\in X\}\subset \calG,\]
the restrictions $r:\mathcal{G}_{X}\to \calG^{(0)}$ and $s:\mathcal{G}_{X}\to X$ are open maps.
\end{enumerate}
\end{defn}

Given an abstract transversal $X\subset \calG^{(0)}$, $\calG\xleftarrow{r} \calG_X\xrightarrow{s} \calH$ is a 
$\calG$--$\calH$ groupoid equivalence for 
$\calH = \{ \gamma \in \calG_X \,:\, r(\gamma) \in X\}$, see \cite[Example 2.7]{MRW}. 
Examples of abstract transversals include transitive groupoids and 
groupoids from foliations.

We now fix a locally compact, second countable and Hausdorff groupoid $\calG$ such 
that $X \subset \calG^{(0)}$ is compact and admits an abstract transversal 
$\calG_X$ with $\calH$ \'{e}tale. We also fix
 a normalised groupoid $2$-cocycle $\sigma_\calG$ on $\calG$, i.e. 
$\sigma_\calG(\gamma,\gamma^{-1}) = 1$ 
for all $\gamma \in \calG$. The restriction of $\sigma_\calG$ then gives a groupoid $2$-cocycle 
$\sigma_\calH$
for $\calH$. The $2$-cocycle twists the  module structure
\begin{align*}
      (f \cdot e)(z) &=  \int_{\calG} f(\gamma) e(\gamma^{-1} z)\, \sigma_\calG(\gamma,\gamma^{-1}z) 
          \,\mathrm{d}\nu^{r(z)}(\gamma),  
        \quad e \in C_c(\calG_X), \, f \in C_c(\calG, \sigma_\calG), \\
    (e \cdot g)(z) &= \sum_{\eta \in r^{-1}(s(z))} \! e(z \eta) g(\eta^{-1}) \,\sigma_\calG(z \eta,\eta^{-1}), 
       \quad  e \in C_c(\calG_X), \, g \in C_c(\calH,\sigma_{\calH}),  \\
        ( e_1 \mid e_2)_\calH(\eta) &= \int_{\calG} \ol{ e_1(\gamma^{-1}z)} e_2 (\gamma^{-1} z\eta) 
   \,\sigma_\calG(z^{-1}\gamma, \gamma^{-1}z \eta) \,\mathrm{d}\nu^{r(z)}(\gamma), \quad r(\eta)=s(z).
\end{align*}
Proposition \ref{prop:untwisted_gpoid_morita_equiv} can be extended to the 
case of such simple twists. The case of general groupoid twists arising from $S^1$-extensions 
is handled via equivalence of 
Fell bundles, see~\cite{MT11, SimsWilliamsFell}.
\begin{prop}
The module $C_c(\calG_X)$ is a 
pre-Morita equivalence bimodule for 
$C_c(\calG,\sigma_\calG)$ and $C_c(\calH,\sigma_\calH)$. 
Consequently, $C^*_r(\calG, \sigma_\calG)$ and $C^*_r(\calH, \sigma_\calH)$ are 
Morita equivalent.
\end{prop}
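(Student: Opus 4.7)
The plan is to verify directly that the formulas given in the statement for the actions and the $C_c(\calH,\sigma_\calH)$-valued inner product, together with an analogous left $C_c(\calG,\sigma_\calG)$-valued inner product, satisfy the axioms of Definition \ref{def:pre_MEB}. The untwisted Proposition \ref{prop:untwisted_gpoid_morita_equiv} furnishes the template: once $\sigma_\calG \equiv 1$, every formula collapses to the one already verified in~\cite{SimsWilliams}. The new content is to show that the extra factors of $\sigma_\calG$ produced at each axiom check cancel against one another by repeated use of the cocycle identity and its consequences from Lemma \ref{lem:2cocycle_identities}.

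First I would define the left pairing, namely the natural modification of
\[
{}_\calG(e_1\mid e_2)(\gamma) = \sum_{\eta \in r^{-1}(s(z))} e_1(z\eta)\, \ol{e_2(\gamma^{-1}z\eta)} \, \sigma_\calG(\gamma^{-1}z\eta, (z\eta)^{-1}\gamma),
\]
where $z$ is any element of $\calG_X$ with $r(z) = s(\gamma)$, and check independence of the choice of $z$ using property (ii) of Definition \ref{def:abstract_transversal}. Then I would verify in turn the three associativity identities $(f_1\ast f_2)\cdot e = f_1\cdot(f_2\cdot e)$, $(e\cdot g_1)\cdot g_2 = e\cdot(g_1\ast g_2)$, and $(f\cdot e)\cdot g = f\cdot(e\cdot g)$; each reduces, after interchanging the order of integration and summation, to a single application of the cocycle identity $\sigma(\gamma,\xi)\sigma(\gamma\xi,\eta) = \sigma(\gamma,\xi\eta)\sigma(\xi,\eta)$.

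Next I would verify the $\ast$-sesquilinearity of both inner products together with the module identities $(e_1\mid e_2\cdot g)_\calH = (e_1\mid e_2)_\calH\ast g$ and ${}_\calG(f\cdot e_1\mid e_2) = f\ast{}_\calG(e_1\mid e_2)$; this is where the two identities of Lemma \ref{lem:2cocycle_identities}, namely $\sigma(\gamma,\xi)=\ol{\sigma(\gamma\xi,\xi^{-1})}=\sigma(\xi^{-1},\gamma^{-1})$, are needed to relate the conjugated cocycle factor in the definition of the inner products to the ones appearing in the action formulas. Positivity of $(e\mid e)_\calH$ and ${}_\calG(e\mid e)$ follows by faithfully representing $C^*_r(\calH,\sigma_\calH)$ and $C^*_r(\calG,\sigma_\calG)$ on the respective twisted $L^2$-modules, where the pairing becomes $v^*v$ for an appropriate $v$. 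Fullness of both pairings is inherited from the untwisted case via a partition of unity argument on $X$ and $\calG^{(0)}$, since the cocycle factors do not affect the supports.

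Finally, the essential compatibility ${}_\calG(e_1\mid e_2)\cdot e_3 = e_1\cdot(e_2\mid e_3)_\calH$ is established by performing the substitution $u = z\eta^{-1}$ (as in the proof of Lemma \ref{lem:module_properties_lead_to_localisation_properties}) on one side and matching the result to the other side; the two sides initially differ by a product of four cocycle evaluations in $\sigma_\calG$, which collapse to $1$ by iterating the cocycle identity and applying Lemma \ref{lem:2cocycle_identities}. I expect this bookkeeping of four cocycle factors to be the main obstacle: the combinatorics is straightforward but delicate, and it is the one place where neither associativity of a single action nor compatibility with a single inner product is enough, since both must be invoked simultaneously. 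Once the pre-Morita equivalence of convolution algebras is established, the norm-bound axioms of Definition \ref{def:pre_MEB} are automatic from the representation on $L^2(\calG_X)$, and Morita equivalence of the $C^*$-completions $C^*_r(\calG,\sigma_\calG)$ and $C^*_r(\calH,\sigma_\calH)$ follows by completing the bimodule in either of the two inner-product norms, as in Section~3.1 of~\cite{RaeburnWilliams}.
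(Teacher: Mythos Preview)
The paper does not actually supply a proof of this proposition: it is stated immediately after the remark that Proposition~\ref{prop:untwisted_gpoid_morita_equiv} ``can be extended to the case of such simple twists,'' with a reference to the Fell bundle equivalence theorems~\cite{MT11, SimsWilliamsFell} for the general case. So there is nothing in the paper to compare your argument against beyond that one sentence.

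Your direct verification is a legitimate and more explicit route. The outline is sound: each bimodule and inner-product axiom does reduce to a single application of the cocycle identity or of Lemma~\ref{lem:2cocycle_identities}, and the compatibility ${}_\calG(e_1\mid e_2)\cdot e_3 = e_1\cdot(e_2\mid e_3)_\calH$ is indeed the one place where several cocycle factors must be collapsed simultaneously. One small caution: the norm-bound inequalities in Definition~\ref{def:pre_MEB} are not quite automatic from a single representation on $L^2(\calG_X)$; you need to control both $\|a\cdot e\|$ in terms of the reduced norm of $a$ and $\|e\cdot b\|$ in terms of the reduced norm of $b$, which in the reduced setting is most cleanly done via the twisted linking groupoid (the $\sigma$-twisted analogue of the construction in~\cite{SimsWilliams}) rather than by the two regular representations separately. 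This is a packaging issue rather than a gap, and your plan goes through once that point is handled.
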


\subsection{Twisted frames and Wannier bases}

We now outline the minor changes required to recover the results of Section \ref{sec:Gpoid_Gabor_General} to 
the case of twisted algebras. One advantage of restricting to normalised cocycles is that in the 
Hilbert space localisation, the inner-product simplifies. Namely, for $x\in X=\calH^{(0)}$,
\begin{align*}
  \langle (e_1)_x, (e_2)_x \rangle_x = (e_2\mid e_1)_\calH(x) 
    &= \int_\calG \ol{ e_2(\gamma^{-1} z)} e_1 (\gamma^{-1}z) \, \sigma_\calG(z^{-1}\gamma, \gamma^{-1}z) 
    \,\mathrm{d}\nu^{r\circ s^{-1}(x)}(\gamma) \\
    &= \int_\calG \ol{ e_2(\gamma^{-1} z)} e_1 (\gamma^{-1}z)  \,\mathrm{d}\nu^{r\circ s^{-1}(x)}(\gamma).
\end{align*}

\begin{lemma} \label{lem:twisted_loc_properties}
Let $x \in X$ and $\alpha \in r^{-1}(x)$. For $e_x \in \frakh_x$, define 
$e^\alpha_x(y) = e(y \alpha) \,\sigma_\calG(y\alpha,\alpha^{-1})$.
\begin{enumerate}
  \item[(i)] If $e \in L^2(\calG_X, \sigma)$ is such that $(e\mid e)_\calH = 1_\calH$, then 
  $\{e_x^\alpha\}_{\alpha \in r^{-1}(x)}$ is an orthonormal system in $\frakh_x$.
  \item[(ii)] Given $e_1, e_2, \xi \in {}_{\calG} L^2(\calG_X,\sigma)_\calH$, 
  $$
     \big( \Theta^R_{e_1,e_2}(\xi)\big)_x = 
       \sum_{\alpha \in r^{-1}(x)} (e_1)_x^\alpha \langle \xi_x, (e_2)_x^\alpha \rangle_x
  $$
\end{enumerate} 
\end{lemma}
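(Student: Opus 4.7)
The strategy parallels the proof of Lemma \ref{lem:module_properties_lead_to_localisation_properties}, but with the normalised 2-cocycle $\sigma_\calG$ carefully tracked throughout. The first step, which I would state as an auxiliary identity, is to establish that in the twisted setting one still has
\[
e_x^\alpha = (e \cdot \delta_\alpha^*)_x,
\]
where $\{\delta_\alpha\}_{\alpha \in r^{-1}(x)}$ are the bump functions of Lemma \ref{lem:delta_fns_on_range_fibre}. Expanding the twisted right action,
\[
(e \cdot \delta_\alpha^*)(y) = \sum_{\eta \in r^{-1}(s(y))} e(y\eta)\, \delta_\alpha^*(\eta^{-1})\, \sigma_\calG(y\eta,\eta^{-1}),
\]
and using normalisation of the cocycle, $\delta_\alpha^*(\eta^{-1}) = \sigma_\calH(\eta^{-1},\eta)\overline{\delta_\alpha(\eta)} = \overline{\delta_\alpha(\eta)}$. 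For $y$ in the fibre over $x$, only $\eta=\alpha$ contributes, producing precisely $e(y\alpha)\sigma_\calG(y\alpha,\alpha^{-1}) = e_x^\alpha(y)$.

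For part (i), the usual inner-product identity $(e\cdot b_1\mid e\cdot b_2)_\calH = b_1^*(e\mid e)_\calH b_2$ together with the auxiliary identity gives
\[
\langle e_x^{\alpha_1}, e_x^{\alpha_2}\rangle_x
= \omega_x\bigl(\delta_{\alpha_1}\,(e\mid e)_\calH\,\delta_{\alpha_2}^*\bigr)
= \omega_x(\delta_{\alpha_1}\ast \delta_{\alpha_2}^*) = \delta_{\alpha_1,\alpha_2},
\]
where the middle equality uses $(e\mid e)_\calH = 1_\calH$ and the last is Lemma \ref{lem:delta_fns_on_range_fibre}. Orthonormality of $\{e_x^\alpha\}$ follows.

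For part (ii), I would expand $(\Theta^R_{e_1,e_2}(\xi))(y) = (e_1 \cdot (e_2\mid \xi)_\calH)(y)$ using the twisted right action of $\calH$ and the twisted $\calH$-valued inner product. This produces, for $y \in s^{-1}(x)$, a sum over $\alpha \in r^{-1}(x)$ and an integral over $\calG$ involving the three factors $e_1(y\alpha)$, $\overline{e_2(\gamma^{-1}z)}$, $\xi(\gamma^{-1}z\alpha^{-1})$, together with two cocycle terms $\sigma_\calG(y\alpha,\alpha^{-1})$ and $\sigma_\calG(z^{-1}\gamma,\gamma^{-1}z\alpha^{-1})$. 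Performing the substitution $u = z\alpha^{-1}$ (as in the untwisted case, this is permitted because $Z$ is a groupoid equivalence and the Haar measure is independent of the base point, so $\mathrm{d}\nu^{\rho(u)} = \mathrm{d}\nu^{\rho(z)}$), the first cocycle factor matches the one appearing in $(e_1)_x^\alpha(y)$, while the second must be rearranged to produce the cocycle $\sigma_\calG((u\alpha)^{-1}\gamma,\gamma^{-1}u\alpha)$ occurring in $((e_2)_x^\alpha\mid \xi_x)$.

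The main obstacle is precisely this cocycle bookkeeping in the final step. The rearrangement requires repeated application of Lemma \ref{lem:2cocycle_identities} (in particular $\sigma(\gamma,\xi)=\overline{\sigma(\gamma\xi,\xi^{-1})}$) together with the defining cocycle identity to telescope the product $\sigma_\calG(z^{-1}\gamma,\gamma^{-1}z\alpha^{-1}) \cdot \overline{\sigma_\calG(y\alpha,\alpha^{-1})}$ into the single required factor. Once this identification is made, the formula reassembles directly into $\sum_{\alpha}(e_1)_x^\alpha\langle \xi_x,(e_2)_x^\alpha\rangle_x$, exactly mirroring the untwisted computation.
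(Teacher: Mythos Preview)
Your approach is essentially identical to the paper's: both establish the auxiliary identity $e_x^\alpha=(e\cdot\delta_\alpha^*)_x$ by expanding the twisted right action, deduce part~(i) from Lemma~\ref{lem:delta_fns_on_range_fibre} via $\omega_x(\delta_{\alpha_1}\ast\delta_{\alpha_2}^*)$, and for part~(ii) expand $\Theta^R$, substitute $u=z\alpha^{-1}$, and reduce the cocycle bookkeeping using the $2$-cocycle identity together with Lemma~\ref{lem:2cocycle_identities}. One small imprecision: in your last paragraph you speak of telescoping the product $\sigma_\calG(z^{-1}\gamma,\gamma^{-1}z\alpha^{-1})\cdot\overline{\sigma_\calG(y\alpha,\alpha^{-1})}$, but the factor $\sigma_\calG(y\alpha,\alpha^{-1})$ has already been accounted for in $(e_1)_x^\alpha(y)$ and does not enter the remaining manipulation; the single factor $\sigma_\calG(\alpha^{-1}u^{-1}\gamma,\gamma^{-1}u)$ (after substitution) is what must be converted into $\overline{\sigma_\calG(\gamma^{-1}u\alpha,\alpha^{-1})}$, which the paper does in two steps via the cocycle identity and Lemma~\ref{lem:2cocycle_identities}.
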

\begin{proof}
Like in the untwisted case, we see that 
for $y \in s^{-1}(x)$,
\begin{align*}
  (e\cdot \delta_\alpha^*)(y) 
   &= \sum_{\beta \in r^{-1}(x)} e(y\beta) \delta_\alpha(\beta) \,\sigma_\calG(y\beta, \beta^{-1}) 
   = e(y\alpha)\, \sigma_\calG(y\alpha,\alpha^{-1}).
\end{align*}
Hence we can compute
\begin{align*}
  \langle e_x^{\alpha_1}, e_x^{\alpha_2} \rangle_x &= 
  \omega_x( \delta_{\alpha_1}\ast \delta_{\alpha_2}^*)  
  = \sum_{\beta \in r^{-1}(x)} \delta_{\alpha_1}(\beta) \delta_{\alpha_2}(\beta)\, \sigma_\calH(\beta,\beta^{-1}) 
  = \delta_{\alpha_1, \alpha_2}.
\end{align*}
For part (ii), can follow the same argument as Lemma \ref{lem:module_properties_lead_to_localisation_properties}.
For $y \in s^{-1}(x)$ and $u = z \alpha^{-1}$ with
$u\alpha = z s(\alpha) = z s(z) = z$ and  $r(u) = r(z\alpha^{-1}) = r(z)$,
\begin{align*}
\big( \Theta^R_{e_1,e_2}(\xi)\big)(y) &=
     \sum_{\alpha \in r^{-1}(x)} e_1(y\alpha) \, \sigma_\calG(y\alpha, \alpha^{-1})
     \int_{\calG} \ol{e_2(\gamma^{-1} u \alpha) } \xi( \gamma^{-1} u)  \,
       \sigma_\calG(\alpha^{-1}u^{-1}\gamma, \gamma^{-1}u )\,  \mathrm{d}\nu^{r(u)}(\gamma) \\
   &= \sum_{\alpha \in r^{-1}(x)} e_1(y\alpha) \, \sigma_\calG(y\alpha, \alpha^{-1})
    \int_{\calG} \ol{e_2(\gamma^{-1} u \alpha) \, \sigma_\calG(\gamma^{-1}u\alpha, \alpha^{-1}) } \, 
     \xi( \gamma^{-1} u) \, \mathrm{d}\nu^{r(u)}(\gamma) \\
     &= \sum_{\alpha \in r^{-1}(x)}  (e_1)_x^\alpha(y) \, \langle (e_2)_x^\alpha, \xi_x \rangle_x,
\end{align*} 
where we used 
the $2$-cocycle identity
$$
  \sigma_\calG(\alpha^{-1}, u^{-1}\gamma) \sigma_\calG(\alpha^{-1}u^{-1}\gamma, \gamma^{-1}u) 
   = \sigma_\calG(\alpha^{-1}, s(\gamma^{-1}u) ) \sigma_\calG(u^{-1}\gamma, \gamma^{-1} u) = 1
$$
which implies that 
$\sigma_\calG(\alpha^{-1}u^{-1}\gamma, \gamma^{-1}u) = \ol{\sigma_\calG(\alpha^{-1}, u^{-1}\gamma) }$.
Then using Lemma \ref{lem:2cocycle_identities} 
\[
  \ol{\sigma_\calG(\alpha^{-1}, u^{-1}\gamma) } = \sigma_\calG( \gamma^{-1}u, \alpha) 
  = \ol{ \sigma_\calG( \gamma^{-1}u \alpha, \alpha^{-1}) } .     \qedhere
\]
\end{proof}

Given elements $w_j^{\alpha_1}, w_k^{\alpha_2} \in \frakh_x = L^2(\calG_X, \mathrm{d}\nu^{r\circ s^{-1}(x)})$, 
we can compute that 
\begin{align*}
  \langle w_j^{\alpha_1}, w_k^{\alpha_2}  \rangle_x &= \omega_x \big( \delta_{\alpha_1} (w_j\mid w_k)_\calH  
     \delta^*_{\alpha_2} \big) 
  = \big( \delta_{\alpha_1} \ast (w_j \mid w_k)_\calH \big) (\alpha_2) \\
  &= ( w_j \mid w_k)_\calH( \alpha_1^{-1} \alpha_2) \, \sigma_\calH(\alpha_1, \alpha_1^{-1}\alpha_2).
\end{align*}
Hence, if $\langle w_j^{\alpha_1}, w_k^{\alpha_2}  \rangle_x = \delta_{j,k}\, \delta_{\alpha_1,\alpha_2}$ 
for some $\alpha_1, \alpha_2 \in r^{-1}(x)$, the $2$-cocycle term will be $1$.

At this point we can follow the same arguments as those in 
Theorem \ref{thm:equivalence_to_hilbert_frame} and Section \ref{subsec:fgp_Gabor}, so we summarise our results.

\begin{prop} \label{prop:twisted_general_loc_frame}
If $(e_j)_{j\in J}$ is a right $C^*$-module frame of ${}_{\calG}L^2(\calG_X, \sigma)_{\calH}$, then for all 
$x \in X$  the set 
$\{(e_j)_x^\alpha\}$ for $j\in J$ and $\alpha \in r^{-1}(x)$ is a normalised tight frame of $\frakh_x$. 
\end{prop}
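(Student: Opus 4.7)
The plan is to transplant the direction $(1)\Rightarrow(2)$ of Theorem \ref{thm:equivalence_to_hilbert_frame} to the twisted setting, with Lemma \ref{lem:twisted_loc_properties} taking the role of Lemma \ref{lem:module_properties_lead_to_localisation_properties}. Since we have restricted to normalised cocycles, the twist intervenes only through the definition of the translates $e_x^\alpha$ and the cocycle factor is already absorbed into the reconstruction identity in part (ii) of Lemma \ref{lem:twisted_loc_properties}, so the argument proceeds formally as in the untwisted case.

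First I would fix $x \in X$ and take $\xi$ in the dense submodule $C_c(\calG_X)$ of ${}_\calG L^2(\calG_X,\sigma)_\calH$. Because $(e_j)_{j\in J}$ is a right $C^*$-module frame, the net of partial sums $\sum_{j \in F}\Theta^R_{e_j,e_j}(\xi)$ converges to $\xi$ in $C^*$-module norm as $F \nearrow J$. The state $\omega_x$ satisfies $\|\eta_x\|_x^2 = \omega_x((\eta\mid\eta)_\calH) \leq \|\eta\|^2$, so the localisation map $\eta \mapsto \eta_x$ is contractive and the identity descends to $\frakh_x$. Applying Lemma \ref{lem:twisted_loc_properties}(ii) termwise yields the reconstruction
$$
  \xi_x \;=\; \sum_{j \in J} \sum_{\alpha \in r^{-1}(x)} (e_j)_x^\alpha \,\langle (e_j)_x^\alpha,\,\xi_x\rangle_x
$$
in $\frakh_x$. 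Pairing with $\xi_x$ on the left and using that the inner product is linear on the right (so $\langle \xi_x,(e_j)_x^\alpha\rangle_x = \overline{\langle (e_j)_x^\alpha,\xi_x\rangle_x}$) gives
$$
  \|\xi_x\|_x^2 \;=\; \sum_{j \in J}\sum_{\alpha \in r^{-1}(x)} \bigl|\langle (e_j)_x^\alpha,\xi_x\rangle_x\bigr|^2,
$$
which is precisely the normalised tight frame condition. Since $C_c(\calG_X)$ maps densely into $\frakh_x$ by construction of the localisation, the identity extends by continuity to all $\xi_x \in \frakh_x$.

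The one point requiring a little care is the interchange of the sum over $j$ with the localisation map and with the pairing against $\xi_x$. This is not a genuine obstacle: strict convergence of $\sum_j \Theta^R_{e_j,e_j}$ to $\mathrm{Id}$ in $\End^{*}_{C^{*}_{r}(\calH,\sigma_\calH)}({}_\calG L^2(\calG_X,\sigma)_\calH)$ is preserved under the contractive localisation and under continuous linear functionals, so the manipulations above are justified exactly as in the untwisted version of the argument. Apart from bookkeeping of the cocycle factor in the translates $e_x^\alpha$ (already handled in Lemma \ref{lem:twisted_loc_properties}), no new ingredient is needed.
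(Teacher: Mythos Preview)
Your proposal is correct and follows exactly the approach the paper indicates: the paper does not give a separate proof but simply states that the argument of Theorem~\ref{thm:equivalence_to_hilbert_frame} (direction $(1)\Rightarrow(2)$) goes through with Lemma~\ref{lem:twisted_loc_properties} in place of Lemma~\ref{lem:module_properties_lead_to_localisation_properties}. You have in fact supplied more detail on the convergence bookkeeping than the paper does.
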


We can again define a representation of $\pi_x :C^*_r(\calG, \sigma_\calG) \to \calB(\frakh_x)$, 
where
$$
   (\pi_x(f) \xi_x)(y) = \int_{\calG} f(\gamma) \xi(\gamma^{-1}y) \, \sigma_\calG(\gamma,\gamma^{-1}y) 
       \,\mathrm{d}\nu^{r \circ s^{-1}(x)}(\gamma), 
   \qquad f \in C_c(\calG), \,\, \xi \in C_c(\calG_X).
$$

We consider the case of pre-$C^*$-algebras $\calS_k(\calG, \sigma) \subset C^*_r(\calG,\sigma_\calG)$ and  
$\calS_{k}(\calH,\sigma) \subset C^*_r(\calH,\sigma_\calH)$ defined from families of derivations with a 
pre-Morita equivalence bimodule 
 ${}_{\calS_k(\calG,\sigma)}{\calS_{k}(\calG_X, \sigma)}_{\calS_k(\calH,\sigma)}$ defined from a 
 family of maps $\nabla_{j}:C_{c}(Z)\to C_{c}(Z)$ using the construction in 
 Proposition \ref{lem:pre-Morita-Frechet-HSpace}.

\begin{prop}
Let $k=0,\cdots,\infty$ and $p=p^*=p^2 \in M_n(\calS_k(\calG, \sigma))$. There are elements 
$\{v_j\}_{j=1}^m \subset p \calS_{k}(\calG_X, \sigma)^{\oplus n}_{\calS_k(\calH,\sigma)}$ such that
for all $x\in X$,  $\{v_1^{\beta}, \ldots , v_m^{\beta}\}_{\beta \in r^{-1}(x)}$ 
is a normalised tight frame 
of $\pi_x(p)\frakh_x^{\oplus n}$
that is finite under the seminorms $\|\cdot\|_{l, x}$ on $\frakh_x$ for $0\leq l \leq k$.
This tight frame is an orthonormal basis for all 
$x\in \calH^{(0)}$ if and only if 
$p\calS_{k}(\calG_X,\sigma)^{\oplus n}_{\calS_k(\calH,\sigma)} \cong \calS_k(\calH,\sigma)^{\oplus m}$.
\end{prop}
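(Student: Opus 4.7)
The plan is to transfer the proof of the untwisted smooth counterpart (the final theorem of Section \ref{subsec:fgp_Gabor}) to the twisted setting, substituting each ingredient with its twisted analog developed in Section \ref{sec:Twsited_Gabor}. First, since ${}_{\calS_k(\calG,\sigma)}\calS_k(\calG_X, \sigma)_{\calS_k(\calH,\sigma)}$ is a pre-Morita equivalence bimodule of pre-$C^*$-algebras (the twisted form of Proposition \ref{lem:pre-Morita-Frechet-HSpace} indicated before the statement), Lemma \ref{lem: finite_frame}(2) applied to $p \in M_n(\calS_k(\calG, \sigma))$ produces a finite right module frame $\{v_1, \ldots, v_m\} \subset p\calS_k(\calG_X, \sigma)^{\oplus n}$ with $\sum_j \Theta^R_{v_j, v_j} = p$. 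Proposition \ref{prop:twisted_general_loc_frame} then yields the normalised tight frame property of $\{v_j^\beta\}_{j,\beta}$ on $\pi_x(p)\frakh_x^{\oplus n}$ for each $x \in X$.

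For seminorm finiteness, I would verify as in the proof of Lemma \ref{lem:twisted_loc_properties} that $v_j^\beta$ coincides (up to a unimodular cocycle factor) with the image in $\frakh_x$ of $v_j \cdot \delta_\beta^*$. Since $v_j \in \calS_k(\calG_X, \sigma)^{\oplus n}$ and $\delta_\beta \in C_c(\calH) \subset \calS_k(\calH, \sigma)$, the product again lies in $\calS_k(\calG_X, \sigma)^{\oplus n}$, and the seminorm bound $\|e\|_{l,x} \leq \|e\|_l$ from Proposition \ref{lem:pre-Morita-Frechet-HSpace} gives finiteness in each $\|\cdot\|_{l,x}$ for $l \leq k$.

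For the ``if'' direction (free module implies orthonormal basis), assume $\Phi : p\calS_k(\calG_X, \sigma)^{\oplus n} \xrightarrow{\sim} \calS_k(\calH, \sigma)^{\oplus m}$ is a $\calS_k(\calH, \sigma)$-linear isomorphism and replace $\{v_j\}$ with $\{\Phi^{-1}(1_j)\}$, which is still a finite right frame. Then $(v_j \mid v_k)_\calH = \delta_{j,k}\,1_{\calS_k(\calH, \sigma)}$. Substituting into the identity
\[
\langle (v_j)_x^{\alpha_1}, (v_k)_x^{\alpha_2} \rangle_x
= (v_j \mid v_k)_\calH(\alpha_1^{-1}\alpha_2)\,\sigma_\calH(\alpha_1, \alpha_1^{-1}\alpha_2),
\]
recorded just before Proposition \ref{prop:twisted_general_loc_frame}, and using $\sigma_\calH(\alpha, s(\alpha)) = 1$ together with the fact that $\alpha_1^{-1}\alpha_2 \in \calH^{(0)}$ iff $\alpha_1 = \alpha_2$, this collapses to $\delta_{j,k}\,\delta_{\alpha_1, \alpha_2}$. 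Combined with the Parseval property this upgrades the tight frame to an orthonormal basis of $\pi_x(p)\frakh_x^{\oplus n}$.

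For the ``only if'' direction (orthonormal basis implies free module), if $\{v_j^\beta\}$ is an orthonormal basis for every $x \in X$, running the same formula in reverse forces the Gram matrix $((v_j \mid v_k)_\calH)_{j,k}$ to be supported on $\calH^{(0)} = X$ and to equal $\delta_{j,k}$ there, i.e.\ the identity element of $M_m(\calS_k(\calH, \sigma))$ (noting that $1_{C^*_r(\calH, \sigma)}$ lies in $\calS_k(\calH, \sigma)$ since $\calH^{(0)}$ is compact and open in the \'etale groupoid $\calH$). The reconstruction map $\calS_k(\calH, \sigma)^{\oplus m} \to p\calS_k(\calG_X, \sigma)^{\oplus n}$, $(b_j) \mapsto \sum_j v_j \cdot b_j$, is then a $\calS_k(\calH, \sigma)$-linear isomorphism with inverse $e \mapsto ((v_j \mid e)_\calH)_j$, the mutual inversion being guaranteed by the frame identity $\sum_j \Theta^R_{v_j, v_j} = p$. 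The main obstacle I anticipate is not conceptual but bookkeeping: one must track the normalised cocycle identities (Lemma \ref{lem:2cocycle_identities} and $\sigma_\calG(\gamma, \gamma^{-1}) = 1$) through both the frame and inner-product calculations to ensure the Kronecker deltas collapse as expected at each step.
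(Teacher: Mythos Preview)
Your proposal is correct and is precisely the approach the paper takes: the paper gives no separate proof for this twisted statement, instead remarking that one can ``follow the same arguments as those in Theorem \ref{thm:equivalence_to_hilbert_frame} and Section \ref{subsec:fgp_Gabor}'', and you have carried out exactly that transfer, invoking Lemma \ref{lem: finite_frame}(2), Proposition \ref{lem:pre-Morita-Frechet-HSpace}, Lemma \ref{lem:twisted_loc_properties}, and the inner-product identity preceding Proposition \ref{prop:twisted_general_loc_frame} in the right places. One small point worth tightening in the ``if'' direction: from a mere $\calS_k(\calH,\sigma)$-module isomorphism $\Phi$ you cannot directly conclude $(\Phi^{-1}(1_j)\mid \Phi^{-1}(1_k))_\calH=\delta_{j,k}$; you need $\Phi$ unitary, which is obtained either by a Gram--Schmidt step using the holomorphic functional calculus in the pre-$C^*$-algebra $\calS_k(\calH,\sigma)$ or, as the paper's untwisted proof does, by passing to the $C^*$-completion where module isomorphism of finitely generated projectives implies unitary equivalence.
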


\section{Frames of translates and Wannier bases for the Delone groupoid} \label{Sec:Delone_application}

\subsection{Delone sets and the transversal groupoid}

We review some of the material from~\cite{BHZ00} as outlined in \cite{BMes}. 
We denote by $B(x;K) \subset \R^d$ the open ball centered at $x$ with radius $K>0$.
\begin{defn}
Let $\calL \subset\R^d$ be discrete and infinite and fix $0<r<R$.
\begin{enumerate}
  \item $\calL$ is $r$-uniformly discrete if 
    $|B(x;r)\cap \calL| \leq 1$ for all $x\in\R^d$. 
  \item $\calL$ is $R$-relatively dense if 
   $|B(x;R)\cap \calL| \geq 1$ for all $x\in\R^d$.
\end{enumerate}
An $r$-uniformly discrete and $R$-relatively dense set $\calL$ is called an 
$(r,R)$-Delone set.
\end{defn}

\begin{prop}[\cite{BBD18}, Section 3.2] \label{prop:Del_set_topology}
The set of $(r,R)$-Delone sets is a compact and metrizable space. 
Let $d_H$ denote the Hausdorff distance between sets. A
neighbourhood base at $\calL \in\mathrm{Del}_{(r,R)}$ is given by the sets
$$
   U_{\epsilon,M}(\calL) = \big\{ \calL' \in \mathrm{Del}_{(r,R)}\,:\, 
      d_H\big( \calL \cap B(0;M), \, \calL' \cap B(0;M) \big) < \epsilon \big\}, \quad M,\,\varepsilon >0.
$$ 
\end{prop}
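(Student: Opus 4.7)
My plan proceeds in three steps: construct an explicit metric inducing the claimed topology with neighborhood base $\{U_{\epsilon, M}(\calL)\}$, prove sequential compactness via a diagonal extraction, and verify the resulting limit points remain $(r,R)$-Delone. For the metric, I would define
\[
d(\calL, \calL') := \sum_{n=1}^{\infty} 2^{-n} \min\!\bigl\{1,\, d_H\bigl(\calL \cap \overline{B(0;n)},\, \calL' \cap \overline{B(0;n)}\bigr)\bigr\},
\]
with the convention $d_H(A, \emptyset) := 1$ (only relevant when $n \leq R$, since $R$-relative denseness forces both intersections nonempty once $n > R$). The triangle inequality transfers term-by-term from $d_H$. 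Routine estimation shows that forcing $d$ small to order $2^{-N}$ controls $d_H$ on every $\overline{B(0;n)}$ with $n \leq N$, while each $U_{\epsilon, M}(\calL)$ contains a small $d$-ball via the tail bound $\sum_{n > M} 2^{-n}$; the two systems of neighborhoods therefore generate the same filter at each $\calL$, establishing metrizability together with the description of the local base.

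For compactness, since the space is metrizable it suffices to prove sequential compactness. Given $\{\calL_n\} \subset \mathrm{Del}_{(r,R)}$, a volume-packing estimate using $r$-uniform discreteness gives $\#(\calL_n \cap \overline{B(0;M)}) \leq N_M := \lceil (2M/r + 1)^d \rceil$ for every $n$. The space of nonempty subsets of $\overline{B(0;M)}$ of cardinality at most $N_M$ is compact in the Hausdorff metric, embedding as a closed subspace of the compact quotient $\overline{B(0;M)}^{N_M}/S_{N_M}$. A Cantor diagonal extraction over $M \in \N$ yields a subsequence $\{\calL_{n_k}\}$ such that $\calL_{n_k} \cap \overline{B(0;M)}$ converges in $d_H$ to some $K_M$ for every positive integer $M$. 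I would then define the candidate limit
\[
\calL_\infty := \bigl\{x \in \R^d : \exists\, x_k \in \calL_{n_k} \text{ with } x_k \to x\bigr\}.
\]

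The main obstacle is to show that $\calL_\infty \in \mathrm{Del}_{(r,R)}$ and that $\calL_{n_k} \to \calL_\infty$ in $d$. Uniform discreteness of $\calL_\infty$ is immediate, since two limit points at distance $< r$ would pull back to pairs of approximating points in $\calL_{n_k}$ at distance $< r$ for large $k$. Relative denseness follows because each $B(x;R)$ meets every $\calL_{n_k}$, and compactness of $\overline{B(x;R)}$ yields an accumulation point in $\calL_\infty \cap \overline{B(x;R)}$; one then enlarges $R$ infinitesimally to pass from closed to open balls. The delicate point is the identification $\calL_\infty \cap \overline{B(0;M)} = K_M$, since a point of $\calL_{n_k}$ sitting just inside $\partial B(0;M)$ could drift across the boundary in the limit and thereby spoil it. I would address this during the diagonal extraction by selecting a cofinal sequence of radii $M_j \to \infty$ along which no accumulation point of $\bigcup_k \calL_{n_k}$ lies within $2^{-j}$ of $\partial B(0; M_j)$; since the accumulation set is itself uniformly discrete, the set of bad radii is countable and easily avoided, after which the identification at each $M_j$ gives convergence $\calL_{n_k} \to \calL_\infty$ in $d$.
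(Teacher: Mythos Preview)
The paper does not prove this proposition; it simply cites \cite{BBD18}, Section 3.2. Your approach---an explicit weighted-sum metric, diagonal extraction, and verification that the limit is Delone---is the standard one and is essentially what one finds in the literature on Delone dynamical systems.

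There is, however, a genuine gap in your treatment of relative denseness of the limit $\calL_\infty$, and it is not merely cosmetic. You write that compactness of $\overline{B(x;R)}$ yields an accumulation point in $\calL_\infty \cap \overline{B(x;R)}$, and then propose to ``enlarge $R$ infinitesimally to pass from closed to open balls.'' This does not work: you are not free to change $R$, and the open-ball condition can genuinely fail in the limit. Concretely, in $\R$ take $\calL_n = (2R - \tfrac{1}{n})\Z$. Each $\calL_n$ is $R$-relatively dense (any open interval of length $2R$ contains a lattice point since the spacing is $<2R$), yet the limit $2R\,\Z$ satisfies $B(R;R)\cap 2R\,\Z = \emptyset$. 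Thus with the paper's open-ball convention, $\mathrm{Del}_{(r,R)}$ is not closed and hence not compact as stated.

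This is a well-known boundary artefact. The result as usually proved (and as in \cite{BBD18}) uses either closed balls in the definition of relative denseness, or equivalently shows compactness of the set of Delone sets with parameters in $[r,\infty)\times (0,R]$; the distinction is invisible for all downstream applications in the paper. Your argument is otherwise sound---including your careful handling of the analogous boundary issue in identifying $K_M$ with $\calL_\infty \cap \overline{B(0;M)}$---but you should either switch to the closed-ball convention or note explicitly that the compact set obtained is the closure of $\mathrm{Del}_{(r,R)}$, which consists of sets that are $R$-relatively dense in the closed-ball sense.
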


The set of Delone sets $\mathrm{Del}_{(r,R)}$ is clearly invariant under translations and rotations.

\begin{defn}
Let $\Lambda$ be a an $(r,R)$-Delone subset of $\R^d$. The \emph{continuous hull of} $\Lambda$ is the 
dynamical system $(\Omega_{\Lambda}, \R^d, T)$, where $\Omega_{\Lambda} \subset \mathrm{Del}_{(r,R)}$ 
is the closure 
of the orbit of $\Lambda$  under the translation action. 
\end{defn}

The continuous hull of $\Lambda$ therefore gives a locally compact Hausdorff groupoid 
$\Omega_{\Lambda} \rtimes \R^d$. This groupoid admits a transversal in the sense of 
Definition \ref{def:abstract_transversal}.

\begin{defn}
The transversal of $\Lambda$ is given by the set
$$
   \Omega_0 = \{ \calL \in \Omega_{\Lambda}\,:\, 0 \in  \calL  \},
$$
\end{defn}
We see that $\Omega_0$ is a closed subset of $\Omega_{\Lambda}$ and so is 
compact.

\begin{prop}[\cite{Kellendonk97}, Lemma 2] \label{prop:etale}
Given a Delone set $\Lambda$ with transversal $\Omega_0$, define the set 
$$
  \calG_\mathrm{Del} := \big\{ (\calL, x)\in \Omega_0 \times\R^d \, :\, x \in \calL \big\},
$$
 with maps 
\begin{align*} \label{eq:derivation_properties}
  &(\calL,x)^{-1} = (\calL-x, -x), &&(\calL,x)\cdot (\calL-x, y) = (\calL,x+y), 
  &&s(\calL,x) = \calL-x, &&r(\calL,x) = \calL
\end{align*}
and unit space $\calG^{(0)}=\Omega_0$. Then $\calG_\mathrm{Del}$ is a Hausdorff \'{e}tale groupoid in the relative topology inherited from $\Omega_0\times\mathbb{R}^{d}$.
\end{prop}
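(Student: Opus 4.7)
The plan is to verify the four properties in order: (i) the given operations define a groupoid, (ii) $\calG_{\mathrm{Del}}$ is Hausdorff in the relative topology, (iii) the structure maps (multiplication, inversion, source, range) are continuous, and (iv) the range map $r$ is a local homeomorphism. Step (i) is a routine computation: composability $(\calL,x)\cdot (\calL',y)$ requires $s(\calL,x)=r(\calL',y)$, i.e.\ $\calL'=\calL-x$, and the axioms $(\gamma\xi)\xi^{-1}=\gamma$, $\gamma^{-1}(\gamma\xi)=\xi$, etc.\ are verified by direct substitution, using that $\calL\in\Omega_{0}$ means $0\in\calL$ (so that $(\calL-x,-x)\in\calG_{\mathrm{Del}}$ whenever $(\calL,x)\in\calG_{\mathrm{Del}}$). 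Step (ii) is immediate: $\Omega_{0}\times\mathbb{R}^{d}$ is a product of Hausdorff spaces, and $\calG_{\mathrm{Del}}$ carries the subspace topology.

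For (iii), continuity of $s$ and $r$ follows from the fact that they are restrictions of continuous maps $\Omega_0\times\mathbb{R}^d\to\Omega_0$ (namely $(\calL,x)\mapsto \calL-x$ and $(\calL,x)\mapsto \calL$, the translation action on $\Omega_\Lambda$ being continuous and $\Omega_0$ being closed in $\Omega_\Lambda$). Continuity of inversion $(\calL,x)\mapsto(\calL-x,-x)$ is analogous. For multiplication I would note that $\calG_{\mathrm{Del}}^{(2)}$ is a closed subset of $\calG_{\mathrm{Del}}\times\calG_{\mathrm{Del}}$ (it is cut out by the equation $s(\gamma)=r(\xi)$), and the composed element $(\calL,x+y)$ depends continuously on $(\calL,x,y)$ in $\Omega_0\times\mathbb{R}^d\times\mathbb{R}^d$.

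The main content is step (iv), the \'etale property. Given $(\calL_{0},x_{0})\in\calG_{\mathrm{Del}}$, the key observation is that $r$-uniform discreteness of every $\calL\in\Omega_{0}$ means that the candidate bisection
\[
U:=\calG_{\mathrm{Del}}\cap\bigl(\Omega_{0}\times B(x_{0};r/2)\bigr)
\]
cannot contain two distinct points $(\calL,x_{1}),(\calL,x_{2})$ with the same first coordinate, since $|x_{1}-x_{2}|<r$ would force $x_{1}=x_{2}$. Hence $r|_{U}$ is injective. It is obviously continuous, so it remains to show that $r(U)$ is open in $\Omega_{0}$ and that the set-valued inverse $\calL\mapsto \calL\cap B(x_{0};r/2)$ is continuous. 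Both follow from the characterisation of the topology in Proposition~\ref{prop:Del_set_topology}: for a given $\calL\in r(U)$ with unique point $x_{\calL}\in\calL\cap B(x_{0};r/2)$, choosing $\varepsilon<r/2-|x_{\calL}-x_{0}|$ and $M$ larger than $|x_{0}|+r/2$, any $\calL'\in U_{\varepsilon,M}(\calL)$ meets $B(x_{\calL};\varepsilon)\subset B(x_{0};r/2)$ in at least one point (relative density plus Hausdorff closeness) and, by $r$-uniform discreteness with $\varepsilon<r/2$, in exactly one point $x_{\calL'}$; moreover $|x_{\calL'}-x_{\calL}|<\varepsilon$, proving continuity of $\calL\mapsto x_{\calL}$.

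I expect the technical heart of the argument to be this last continuity statement, since it is where the Delone topology interacts non-trivially with the bisection. The rest is bookkeeping on the groupoid axioms. Once $r|_{U}$ is a homeomorphism onto an open set, the same local set $U$ witnesses that $s|_{U}$ is a local homeomorphism (via $(\calL,x)\mapsto \calL-x$, which is a translation of $r|_{U}$), so $\calG_{\mathrm{Del}}$ is \'etale.
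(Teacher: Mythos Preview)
Your argument is correct and essentially complete. Note, however, that the paper does not supply its own proof of this proposition: it is stated with a citation to \cite{Kellendonk97} and left at that. The closely related content appears instead in the \emph{next} proposition (Proposition~\ref{dtop}, cited from \cite{BMes}), which records that the sets $V_{(U,y,\varepsilon)}=(U\times B(y;\varepsilon))\cap\calG_{\mathrm{Del}}$ form a base for the topology and that $s|_{V_{(U,y,\varepsilon)}}$ is a homeomorphism onto its image for $0<\varepsilon<r/2$. Your bisection $U=\calG_{\mathrm{Del}}\cap(\Omega_0\times B(x_0;r/2))$ is exactly $V_{(\Omega_0,x_0,r/2)}$, so you have in effect reconstructed the argument behind that proposition as well.

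One small correction: in step~(iv) you invoke ``relative density plus Hausdorff closeness'' to produce a point of $\calL'$ in $B(x_{\calL};\varepsilon)$. Relative density is irrelevant here; the existence of such a point is a direct consequence of the Hausdorff distance bound $d_H(\calL\cap B(0;M),\calL'\cap B(0;M))<\varepsilon$ alone (every point of $\calL\cap B(0;M)$ has a point of $\calL'\cap B(0;M)$ within $\varepsilon$). Relative density only enters the background fact that $\Omega_0$ is nonempty and compact.
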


\begin{notation}
Following the previous proposition, we will let $\calG_\mathrm{Del}$ denote the \'{e}tale 
groupoid from a Delone set. We let $\calF = \Omega_{\Lambda}\rtimes \R^d$ be the 
crossed product groupoid.
\end{notation}

\begin{prop}[\cite{BMes}, Proposition 2.16] \label{dtop} 
Let $\mathcal{L}\subset\mathbb{R}^{d}$ be an $(r,R)$-Delone set with transversal $\Omega_0$ and associated groupoid $\mathcal{G}_\mathrm{Del}$. For $U\subset\Omega_0$ an open set,  the sets
\begin{align*}
V_{(U,y,\varepsilon)}&:=\left(U\times B(y;\varepsilon)\right){}\cap {}\mathcal{G}_\mathrm{Del} 
     =\{(\calL, x)\in \Omega_{0}\times \mathbb{R}^{d} \,:\, \calL\in U,
      \, x \in \mathcal{L}\cap B(y;\varepsilon)\},
\end{align*}
form a base for the topology on $\mathcal{G}_\mathrm{Del}$. For $0<\varepsilon<r/2$, the restriction
$s:V_{(U,y,\varepsilon)}\to \Omega_0$ is a homeomorphism onto its image. Moreover,
 the set $\Omega_0\subset \Omega_{\Lambda}$ is an abstract transversal and 
the groupoid $\calG_\mathrm{Del}\subset \Omega_{\Lambda} \rtimes \R^d$, with the subspace topology, 
is equivalent to $\Omega_{\Lambda}\rtimes\mathbb{R}^{d}$. 
\end{prop}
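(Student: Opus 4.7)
The plan is to establish the four assertions of the proposition in turn, leveraging the subspace topology on $\calG_\mathrm{Del}\subset\Omega_0\times\R^d$ and the explicit neighborhood base on $\Omega_0$ recorded in Proposition~\ref{prop:Del_set_topology}.

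The first claim (that the $V_{(U,y,\varepsilon)}$ form a base) is immediate: the product basis of $\Omega_0\times\R^d$ consists of sets $U\times B(y;\varepsilon)$ with $U$ open in $\Omega_0$, and intersecting with $\calG_\mathrm{Del}$ yields a basis for the subspace topology, which is precisely $\{V_{(U,y,\varepsilon)}\}$.

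For the second claim I would verify that, for $\varepsilon<r/2$, the restriction $s|_{V_{(U,y,\varepsilon)}}$ is a continuous injection with continuous inverse. Continuity is automatic, as $s$ is the restriction of $(\calL,x)\mapsto\calL-x$. For injectivity, suppose $(\calL_1,x_1),(\calL_2,x_2)\in V_{(U,y,\varepsilon)}$ satisfy $\calL_1-x_1=\calL_2-x_2$. Since $0\in\calL_i$, each $x_i$ lies in $\calL_i$, and $\calL_2=\calL_1+(x_2-x_1)$ combined with $0\in\calL_2$ forces $x_1-x_2\in\calL_1$. But $|x_1-x_2|<2\varepsilon<r$, so the $r$-uniform discreteness of $\calL_1$ pins down $x_1=x_2$, whence $\calL_1=\calL_2$. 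To see the inverse is continuous I would parametrize $s(V_{(U,y,\varepsilon)})$ as those $\calM\in\Omega_0$ for which there exists a (unique, by the same uniform discreteness argument applied to $\calM$) $x\in B(y;\varepsilon)$ with $\calM+x\in U$. Given $\calM_n\to\calM$ in the topology of Proposition~\ref{prop:Del_set_topology}, pick $M$ so that $B(-y;\varepsilon)\subset B(0;M)$; for any $\delta<r/2-\varepsilon$ and $n$ large, $d_H(\calM_n\cap B(0;M),\calM\cap B(0;M))<\delta$, so the unique point $-x_n\in\calM_n\cap B(-y;\varepsilon)$ sits within $\delta$ of some point of $\calM\cap B(-y;\varepsilon+\delta)$; the latter ball has diameter $<r$ and thus contains only $-x$, giving $|x_n-x|<\delta$. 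Continuity of the $\R^d$-action then yields $\calL_n=\calM_n+x_n\to\calM+x=\calL$.

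For the third claim I would check the two conditions of Definition~\ref{def:abstract_transversal} for $X=\Omega_0$ inside $\calF=\Omega_\Lambda\rtimes\R^d$. Condition (i) is immediate: any $\calL\in\Omega_\Lambda$ is a nonempty Delone set, so picking $p\in\calL$ yields $\calL-p\in\Omega_0$ in the translation orbit of $\calL$. For condition (ii), the restricted range map $r:\calF_{\Omega_0}\to\Omega_\Lambda$, $(\calL,x)\mapsto\calL$, is the restriction of the open first projection. For the source map, a basic open set has image
\[
s\bigl((U\times B(y;\varepsilon))\cap\calF_{\Omega_0}\bigr)=\{\calM\in\Omega_0:\exists\,x\in B(y;\varepsilon),\,\calM+x\in U\},
\]
which is open because the translation action $T:\Omega_\Lambda\times\R^d\to\Omega_\Lambda$, $T(\calM,x)=\calM+x$, is continuous: for any witness pair $(\calM_0,x_0)$ there is a product neighborhood $N_1\times N_2\subset T^{-1}(U)$ with $x_0\in N_2\subset B(y;\varepsilon)$, and then $N_1\cap\Omega_0$ is an open neighborhood of $\calM_0$ contained in the image.

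Finally, for the fourth claim I would invoke the general principle recorded in~\cite[Example 2.7]{MRW}: an abstract transversal $X$ of $\calG$ gives rise to a canonical equivalence bibundle $\calG\leftarrow\calG_X\to\calG|_X$, where $\calG|_X=\{\gamma\in\calG_X:r(\gamma)\in X\}$. Applied to $\calG=\calF$ and $X=\Omega_0$, one identifies $\calG|_X$ with $\calG_\mathrm{Del}$ (equipped with its subspace topology from $\Omega_\Lambda\rtimes\R^d$, which coincides with that from $\Omega_0\times\R^d$ used in the first claim), yielding the asserted equivalence. The main obstacle is the continuity of the inverse of $s|_{V_{(U,y,\varepsilon)}}$: translating Hausdorff convergence of Delone sets into convergence of the ``marked'' point in $B(y;\varepsilon)$ requires both the choice $\varepsilon<r/2$ (to enforce uniqueness via uniform discreteness) and a careful quantitative application of Proposition~\ref{prop:Del_set_topology}.
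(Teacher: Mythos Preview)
The paper does not supply its own proof of this proposition; it is simply quoted from~\cite[Proposition~2.16]{BMes}. So there is nothing in the present paper to compare your argument against, and the relevant question is whether your sketch stands on its own.

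Your argument is essentially correct and follows the natural route: use the product basis to get the base for the subspace topology, exploit $r$-uniform discreteness for injectivity of $s$ on small basic sets, translate Hausdorff convergence of Delone sets into convergence of the unique marked point for continuity of the inverse, and then verify the abstract transversal axioms to invoke~\cite[Example~2.7]{MRW}. The treatment of $s^{-1}$ and of the openness of the restricted source map is careful and correct.

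There is one genuine (if minor) gap. For condition~(ii) of Definition~\ref{def:abstract_transversal} you assert that the restricted range map $r:\calF_{\Omega_0}\to\Omega_\Lambda$ is open because it is ``the restriction of the open first projection''. Restrictions of open maps to subspaces are not open in general, so this does not suffice. You need to argue directly that the image of a basic open set
\[
r\bigl((U\times B(y;\varepsilon))\cap\calF_{\Omega_0}\bigr)=\{\calL\in U:\calL\cap B(y;\varepsilon)\neq\emptyset\}
\]
is open in $\Omega_\Lambda$. This follows from the description of the topology in Proposition~\ref{prop:Del_set_topology}: if $\calL_0$ has a point $x_0\in B(y;\varepsilon)$, then any $\calL$ close enough to $\calL_0$ (in Hausdorff distance on a large ball) has a point within $\varepsilon-|x_0-y|$ of $x_0$, hence in $B(y;\varepsilon)$. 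This is the same mechanism you already used for the inverse of $s$, so the fix is routine.
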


Let us now fix a normalised $2$-cocycle $\sigma : (\Omega_{\Lambda} \rtimes \R^d)^{(2)} \to \T$, 
$\sigma(\gamma, \gamma^{-1}) = 1$ for all $\gamma \in \Omega_{\Lambda} \rtimes \R^d$, 
which also restricts to a $2$-cocycle on $\calG_\mathrm{Del}$.
Our main motivation to consider such twists comes from the following example.

\begin{example}[Magnetic twists, Section 2.2 of~\cite{BLM13}] \label{ex:mag_twist}
Working with the continuous hull $\Omega_{\Lambda} \rtimes \R^d$, we 
can define a 
$2$-cocycle, $\sigma: \calF^{(2)} \to \T$ as follows. 
We first define a parametrised magnetic field as a continuous map  
$B: \Omega_{\Lambda} \to \bigwedge^2 \R^d$. Then we define 
$$
  \sigma((\calL,x), (\calL-x,y)) = \exp\big( -i\Gamma_\calL \langle 0, x, x+y \rangle \big), 
  \qquad \Gamma_\calL \langle x,y,z \rangle = \int_{\langle x,y,z \rangle} \! B(\calL)
$$
and $\langle x,y,z \rangle \subset \R^{2d}$ is the triangle with corners 
$x,y,z \in \R^d$. Hence $\Gamma_\calL \langle 0, x, x+y \rangle$ measures the magnetic  
flux through the triangle defined by the points $0,x,x+y\in \R^d$. 
It is shown in~\cite{BLM13} that $\sigma$ is a well-defined 
$2$-cocycle. 
We remark that $\sigma$ will always be trivial for $d=1$ and is 
normalised because
$$
  \sigma((\calL,x),(\calL-x,-x)) = 
  \exp\big( -i\Gamma \langle 0, x, 0 \rangle \big) = 1.
$$
If the magnetic field is constant over $\Omega_{\Lambda}$, then our general flux equation 
can be described using a real-valued and skew-symmetric matrix $B$ with 
$$
   \sigma((\calL,x), (\calL-x,y)) = \exp \big( -i \langle x, B(x+y) \rangle \big) = 
   \exp \big( -i \langle x, By \rangle \big).
$$
The $2$-cocycle $\sigma$ on $\Omega_{\Lambda}\rtimes \R^d$  also restricts to 
a normalised $2$-cocycle on $\calG_\mathrm{Del}$, where we note that if 
$((\calL,x), (\calL-x,y)) \in \calG_\mathrm{Del}^{(2)}$, the points 
$0,\,x,\,x+y \in \calL$ and so $\Gamma_{\calL} \langle 0, x, x+y \rangle$ gives a flux 
through the triangle with points in $\calL$.
\end{example}

\begin{remark}
Given a groupoid $2$-cocycle $\sigma: (\Omega_{\Lambda}\rtimes \R^d)^{(2)}\to \T$, 
the twisted groupoid $C^*$-algebra 
$C^*_r(\Omega_{\Lambda}\rtimes \R^d, \sigma)$ is canonically isomorphic to the 
twisted crossed product $C(\Omega_{\Lambda}) \rtimes_{\sigma'} \R^d$, where 
$$
   \sigma':\R^d\times\R^d \to \calU(C(\Omega_{\Lambda})), \qquad 
   \sigma'(x,y) = \sigma((\calL,x), (\calL-x,y)) .
$$
\end{remark}

Regularity, smoothness and decay properties of functions on $\Omega_{\Lambda}\rtimes\mathbb{R}^{d}$ 
and $\calG_\mathrm{Del}$ are encoded via 
the groupoid $1$-cocycles 
$$
   c_k: \Omega_{\Lambda}\rtimes\mathbb{R}^{d} \to \R, \qquad c(\calL,x) = x_k, \quad k \in\{1,\ldots,d\}.
$$
and their restrictions to $\calG_{\mathrm{Del}}$. 
It is shown in~\cite[Proposition 2.17]{BMes} that the groupoid cycles are \emph{exact}, in that 
$c_k^{-1}(0)$ has a Haar system and $c_k$ is a quotient map onto its image.

Given the cocycles $c_j:\Omega_{\Lambda}\rtimes\R^d\to \R^d$, we obtain families of commuting derivations 
$\{\partial_j\}_{j=1}^d$ on both $C_c(\Omega_{\Lambda}\rtimes\R^d, \sigma)$ and $C_c(\calG_\mathrm{Del},\sigma)$ given by 
$(\partial_jf)(\mathcal{L},x) = x_j f(\mathcal{L},x)$.
For $k=0,\cdots,\infty$, we obtain pre-$C^{*}$-algebra completions 
$\mathcal{A}_{k}$ of $C_c(\Omega_{\Lambda}\rtimes\R^d, \sigma)$ and 
$\mathcal{B}_{k}$ of $C_{c}(\mathcal{G}_{\mathrm{Del}},\sigma)$ using 
Proposition \ref{prop: apply_BC}.

\subsection{The transversal $\calG_\mathrm{Del}$-space and its localisation}

Following Section \ref{sec:Twsited_Gabor}, we consider the space
$$
  \calF_{\Omega_0} := \big\{ (\calL,x)\in \Omega_{\Lambda} \rtimes \R^d\,:\, x \in \calL \big\}, \qquad 
  s:\calF_{\Omega_0} \to \Omega_0, \quad s(\calL,x) = \calL-x,
$$
which implements a groupoid equivalence between 
$\calF=\Omega_{\Lambda} \rtimes \R^d$ and $\calG_\mathrm{Del}$. 
Thus $C_c(\calF_{\Omega_0})$ is a pre-Morita equivalence bimodule for 
$C_{c}(\Omega_{\Lambda}\rtimes\mathbb{R}^{d},\sigma)$ and $C_{c}(\calG_{\mathrm{Del}},\sigma)$ 
and can be completed into the Morita equivalence bimodule 
${}_{\calF} L^2(\calF_{\Omega_0}, \sigma)_{\calG_\mathrm{Del}}$
between $C^*_r(\Omega_{\Lambda}\rtimes \R^d, \sigma)$ and  $C^*_r(\calG_\mathrm{Del}, \sigma)$. 

\begin{lemma}
The restrictions of the cocycles $c_{j}:\Omega_{\Lambda}\rtimes\mathbb{R}^{d}\to \mathbb{R}$ 
to $\mathcal{F}_{\Omega_0}$ define maps
\[\nabla_{j}:C_c(\mathcal{F}_{\Omega_0})\to C_c(\mathcal{F}_{\Omega_0}),\qquad \nabla_{j}(f)(\mathcal{L},x):=x_{j} f(\mathcal{L},x).\]
For all $a\in C_{c}(\calF,\sigma)$, $b\in C_{c}(\calG_\mathrm{Del},\sigma)$ and 
$\xi,\eta\in C_{c}(\calF_{\Omega_0})$ we have 
\begin{align*}
   \nabla_{j}(a\cdot \xi \cdot b)&=\partial_{j} (a)\cdot \xi\cdot b+a\cdot \nabla_{j}(\xi)\cdot b + a\cdot \xi\cdot\partial_{j}(b),      \\
    \partial_{j}(\xi\mid\eta)_{\mathcal{G}_{\mathrm{Del}}}&=(\xi\mid\nabla_{j}(\eta))_{\mathcal{G}_{\mathrm{Del}}}-(\nabla_{j}(\xi)\mid\eta)_{\mathcal{G}_{\mathrm{Del}}}.   
\end{align*}
Consequently, for all $k=0,\cdots , \infty$, the space $C_{c}(\mathcal{F}_{\Omega_0})$ 
can be completed into a pre-Morita equivalence bimodule 
${}_{\calA_k} \calS_{k}(\calF_{\Omega_0},\sigma)_{\calB_{k}}$ for the pre-$C^{*}$-algebras 
$\calA_{k}\subset C^*_r(\Omega_{\Lambda} \rtimes \R^d, \sigma)$ and 
$\calB_{k}\subset C^*_r(\calG_\mathrm{Del}, \sigma)$.
\end{lemma}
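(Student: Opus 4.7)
The strategy is to reduce the statement to the general framework of Section \ref{subsec:derivations_and_bimodules} by verifying the two compatibility identities \eqref{eq: bimodder} and \eqref{eq: leftrighther}, after which Proposition \ref{lem:pre-Morita-Frechet-HSpace} immediately yields the claimed pre-Morita equivalence bimodules ${}_{\calA_k}\calS_k(\calF_{\Omega_0},\sigma)_{\calB_k}$. The algebraic input needed for the identities is the $1$-cocycle property of $c_j$, which on composable elements takes the additive form $c_j(\gamma_1\gamma_2)=c_j(\gamma_1)+c_j(\gamma_2)$ and reads, in our coordinates, as the trivial splitting $x_j=y_j+(x-y)_j$ on $\R^d$.

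First I would write out the three module operations explicitly on $C_c(\calF_{\Omega_0})$ using the Lebesgue Haar system on $\calF=\Omega_\Lambda\rtimes\R^d$ and the counting measure on the discrete fibres of $\calG_\mathrm{Del}$. Specialising the formulas of Section \ref{sec: twistedmorita} gives
\begin{align*}
(a\cdot\xi)(\calL,x) &= \int_{\R^d} a(\calL,y)\,\xi(\calL-y,x-y)\,\sigma\big((\calL,y),(\calL-y,x-y)\big)\,dy, \\
(\xi\cdot b)(\calL,x) &= \sum_{u\in\calL}\xi(\calL,u)\,b(\calL-u,x-u)\,\sigma\big((\calL,u),(\calL-u,x-u)\big),
\end{align*}
and, choosing $z=(\calL,0)$ for $\eta=(\calL,x)\in\calG_\mathrm{Del}$,
\[
(\xi\mid\eta)_{\calG_\mathrm{Del}}(\calL,x) = \int_{\R^d}\overline{\xi(\calL-w,-w)}\,\eta(\calL-w,x-w)\,\sigma\big((\calL,w),(\calL-w,x-w)\big)\,dw.
\]

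Next I would check the Leibniz-type identities by inserting the factor $x_j$ into each expression. For the left action, the splitting $x_j=y_j+(x-y)_j$ separates the resulting integrand into the sum $(\partial_j(a)\cdot\xi)(\calL,x)+(a\cdot\nabla_j(\xi))(\calL,x)$; the right action is identical with $x_j=u_j+(x-u)_j$ inside the sum, producing $(\nabla_j(\xi)\cdot b)(\calL,x)+(\xi\cdot\partial_j(b))(\calL,x)$. For the inner product, expanding $(\xi\mid\nabla_j(\eta))_{\calG_\mathrm{Del}}-(\nabla_j(\xi)\mid\eta)_{\calG_\mathrm{Del}}$ and using the real-valuedness of the coordinate $w_j$ to compute $\overline{\nabla_j(\xi)(\calL-w,-w)}=-w_j\,\overline{\xi(\calL-w,-w)}$ combines the two terms into the factor $(x-w)_j+w_j=x_j$, recovering $\partial_j(\xi\mid\eta)_{\calG_\mathrm{Del}}(\calL,x)$. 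The twist $\sigma$ plays no role in any of these calculations because $\nabla_j$ and $\partial_j$ act only on the coordinate factor and leave the cocycle untouched.

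Finally, since each $\nabla_j$ preserves $C_c(\calF_{\Omega_0})$ and the family $\{\nabla_j\}_{j=1}^d$ commutes (being multiplication by coordinate functions), the hypotheses of Proposition \ref{lem:pre-Morita-Frechet-HSpace} are met, yielding the claimed pre-Morita equivalence bimodule ${}_{\calA_k}\calS_k(\calF_{\Omega_0},\sigma)_{\calB_k}$ for each $k=0,\ldots,\infty$. I expect the only point requiring some care to be the bookkeeping around the $2$-cocycle, in particular confirming that the specific choice $z=(\calL,0)$ makes the cocycle factors identical on both sides of the inner product identity so that nothing beyond the additive splitting $x_j=(x-w)_j+w_j$ is required.
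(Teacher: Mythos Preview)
Your proposal is correct and rests on the same mechanism as the paper: the additive $1$-cocycle property $c_j(\gamma_1\gamma_2)=c_j(\gamma_1)+c_j(\gamma_2)$, which makes multiplication by $x_j$ a derivation, followed by an appeal to Proposition~\ref{lem:pre-Morita-Frechet-HSpace}. The paper, however, replaces your three explicit computations by a single conceptual observation: since $\calF_{\Omega_0}$ and $\calG_\mathrm{Del}$ are subspaces of $\Omega_\Lambda\rtimes\R^d$, the left and right actions and the inner product are all instances of the convolution product in $C_c(\Omega_\Lambda\rtimes\R^d,\sigma)$, so the identities follow at once from the fact that $\partial_j$ is a $\ast$-derivation on that ambient algebra. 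Your explicit verification has the advantage of being self-contained and of making visible that the $2$-cocycle plays no role; the paper's framing buys brevity and explains structurally \emph{why} no separate checks are needed.
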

\begin{proof} 
As $\calF_{\Omega_0}$ and $\mathcal{G}_{\mathrm{Del}}$ are subspaces of 
$\Omega_{\Lambda}\rtimes\mathbb{R}^{d}$ and the bimodule structure and 
inner product are induced by the convolution product in 
$C_{c}(\Omega_{\Lambda}\rtimes\mathbb{R}^{d},\sigma)$, the required identities follow 
from the fact that multiplication by $x_{j}$ is a derivation of 
$C_{c}(\Omega_{\Lambda}\rtimes\mathbb{R}^{d},\sigma)$.
\end{proof}

For every $\calL \in \Omega_0$, there is a state $\omega_\calL$ on $C^*_r(\calG_\mathrm{Del}, \sigma)$ 
such that $\omega_\calL(f) = f(\calL,0)$ for $f\in C_c(\calG_\mathrm{Del}, \sigma)$. Note that 
$$
  \omega_\calL(f^*f) = \sum_{y \in \calL} |f(\calL - y, -y)|^2 , \qquad \omega_{\calL}( 1_{C^*_r(\calG_\mathrm{Del})}) = 1
$$
and so $\omega_\calL$ is faithful.
From this point, all results from Section \ref{sec:Twsited_Gabor} apply to the Delone groupoid 
setting. Though for concreteness, we highlight some key aspects of this example.

\begin{lemma}
For every $\calL \in\Omega_0$, the localised Hilbert space $\mathfrak{h}_\calL \cong L^2(\R^d)$.
\end{lemma}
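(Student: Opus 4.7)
The plan is to apply the computation of the localisation Hilbert space carried out in Section~3 (with its cocycle-twisted analogue from Section~4.2) to the particular equivalence $\calF \leftarrow \calF_{\Omega_0} \rightarrow \calG_{\mathrm{Del}}$, and then to explicitly identify the resulting fibre measure with Lebesgue measure on $\R^d$.

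First I would identify the fibre $Z_\calL = s^{-1}(\calL)\subset \calF_{\Omega_0}$. By definition $s(\calL', x) = \calL' - x$, so $Z_\calL = \{(\calL',x)\in \Omega_\Lambda\times\R^d : x\in\calL',\ \calL' - x = \calL\}$. Since $0\in\calL$ for $\calL\in\Omega_0$, the map $w\mapsto (\calL + w, w)$ is a bijection $\R^d\to Z_\calL$, giving a global parametrisation of the fibre.

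Next I would compute the measure $\nu^{\rho(z)}$ prescribed in the lemma preceding Theorem~3.5. Since $\calF = \Omega_\Lambda\rtimes\R^d$ is a transformation groupoid, its canonical Haar system consists of Lebesgue measure on each range fibre $r^{-1}(\calL')\cong \R^d$. Choosing the base point $z = (\calL,0)\in Z_\calL$, the assignment $\gamma = (\calL,y)\mapsto \gamma^{-1}\cdot z = (\calL - y, -y)$ matches the parametrisation above under $w = -y$ and pushes Lebesgue measure to Lebesgue measure. Hence $L^2(Z_\calL, \nu^{\rho(z)})\cong L^2(\R^d, dw)$.

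For the twisted version, I would verify that the cocycle factor in the inner-product formula
\[
(e_1\mid e_2)_{\calG_{\mathrm{Del}}}(\calL,0) = \int_{\calF} \overline{e_1(\gamma^{-1}z)}\,e_2(\gamma^{-1}z\eta)\,\sigma(z^{-1}\gamma,\gamma^{-1}z\eta)\,d\nu^{r(z)}(\gamma)
\]
collapses to unity: with $\eta = z = (\calL,0)$ a unit, we get $z^{-1}\gamma \cdot (\gamma^{-1}z\eta) = (\calL,0)$, so the factor is $\sigma(\gamma,\gamma^{-1}) = 1$ by the normalisation hypothesis on $\sigma$. This is precisely the simplification already observed at the start of Section~4.2. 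Combining these three steps yields, for $e_1,e_2\in C_c(\calF_{\Omega_0})$,
\[
\langle e_1,e_2\rangle_{\omega_\calL} = \int_{\R^d}\overline{e_1(\calL+w,w)}\,e_2(\calL+w,w)\,dw,
\]
so the completion of $C_c(\calF_{\Omega_0})$ in $\langle\cdot,\cdot\rangle_{\omega_\calL}$ is $L^2(\R^d)$. The only delicate point is purely bookkeeping: keeping the sign conventions in $(\calL,x)^{-1} = (\calL - x,-x)$ and the moment maps $r,s$ consistent so that the parametrisation of $Z_\calL$ and the action of the Haar measure coincide. No serious obstacle is expected.
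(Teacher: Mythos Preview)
Your proposal is correct and follows essentially the same route as the paper: identify the fibre $s^{-1}(\calL)$ with $\R^d$ via the parametrisation $w\mapsto(\calL+w,w)$ (the paper uses the equivalent $x\mapsto(\calL-x,-x)$), note that the Haar system on $\calF=\Omega_\Lambda\rtimes\R^d$ pushes Lebesgue measure to this fibre, and observe that the normalised cocycle drops out of the inner product at the unit, yielding $\langle\xi_1,\xi_2\rangle_\calL=\int_{\R^d}\overline{\xi_1(\calL-y,-y)}\,\xi_2(\calL-y,-y)\,dy$. Your treatment of the cocycle is slightly more explicit than the paper's, which simply invokes the simplification recorded at the start of Section~4.2, but the argument is the same.
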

\begin{proof}
We define a map $\beta_\calL \to L^2(\R^d)$ that agrees with the localised Hilbert space 
inner product. Namely, we consider $\beta_\calL: L^2(\calF_{\Omega_0}) \to L^2(\R^d)$, given by 
$[\beta_{\calL}(\xi)](x) = \xi(\calL-x, -x)$ for almost all $x$. To see why this is true, we first note 
that for any $\calL \in \Omega_0$, $s^{-1}(\calL) = \{(\calL-x,-x)\}_{x\in \R^d}$ and 
 the measure on $s^{-1}(\calL)$ is just the Lebesgue measure on $\R^d$. We also 
see that
$$
  \langle \xi_1, \xi_2 \rangle_\calL =   (\xi_1 \mid \xi_2)_{C^*_r(\calG_\mathrm{Del})}(\calL, 0) = 
   \int_{\R^d}\!  \ol{\xi_1(\calL-y,-y)} {\xi_2(\calL-y,-y)} \,\mathrm{d}y. 
$$
Hence we see there is a canonical identification of $\mathfrak{h}_\calL$ with 
$\beta_\calL[L^2(\calF_{\Omega_0})] \cong L^2(\R^d)$.
\end{proof}

For any $\xi \in {}_\calF L^2(\calF_{\Omega_0})_{\calG_\mathrm{Del}}$, 
let $\xi_\calL \in \frakh_\calL$ be its localisation. 
Because $\nabla^\alpha \xi(\calL, x) = x^\alpha \xi( \calL,x)$ for $\alpha \in \mathbb{N}^d$ and 
$\xi \in C_c(\calF_{\Omega_0})$, Proposition \ref{lem:pre-Morita-Frechet-HSpace} gives the 
following.

\begin{lemma} \label{lem:dense_localised_subspace_decay}
For any $k=0,\cdots , \infty$ and $\calL \in \Omega_0$, every element in dense subspace 
$\beta_\calL[\calS_k(\calF_{\Omega_0})]\subset\frakh_\calL$ 
has polynomial decay of at least degree $k$,
$$ 
    \| x^\alpha \xi_\calL \|_{\frakh_\calL} \leq C, \quad  \xi \in  \calS_k( \calF_{\Omega_0}), \,\, \alpha \in \N^d, 
    \,\, |\alpha| \leq k.
$$
\end{lemma}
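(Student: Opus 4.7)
The strategy is to express the multiplication operator $x^\alpha$ on $\frakh_\calL$ as the image under $\beta_\calL$ of the module-level derivation $\nabla^\alpha$, and then invoke the continuity estimate from Proposition \ref{lem:pre-Morita-Frechet-HSpace}.

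First I would establish the key identity on the core $C_c(\calF_{\Omega_0})$. Iterating the definition $(\nabla_j \xi)(\calL, x) = x_j \xi(\calL, x)$ gives $(\nabla^\alpha \xi)(\calL, x) = x^\alpha \xi(\calL, x)$ for any multi-index $\alpha$. Combined with the explicit isometry $[\beta_\calL(\xi)](x) = \xi(\calL - x, -x)$, a direct computation gives
\begin{align*}
[\beta_\calL(\nabla^\alpha \xi)](x) &= (\nabla^\alpha \xi)(\calL-x, -x) = (-x)^\alpha \xi(\calL-x, -x) = (-1)^{|\alpha|} x^\alpha [\beta_\calL(\xi)](x).
\end{align*}
Thus on the image of $\beta_\calL$, multiplication by $x^\alpha$ agrees with $(-1)^{|\alpha|} \beta_\calL \circ \nabla^\alpha \circ \beta_\calL^{-1}$, and in particular $\|x^\alpha \xi_\calL\|_{\frakh_\calL} = \|(\nabla^\alpha \xi)_\calL\|_{\frakh_\calL}$.

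Next I would apply Proposition \ref{lem:pre-Morita-Frechet-HSpace} with the evaluation state $\omega_\calL$. That proposition tells us that $\|\cdot\|_{k, \omega_\calL} \leq \|\cdot\|_k$ on $\calS_k(\calF_{\Omega_0})$, so that for any multi-index $\alpha$ with $|\alpha| \leq k$ and any $\xi \in \calS_k(\calF_{\Omega_0})$,
$$
  \|\nabla^\alpha \xi\|_{\frakh_\calL} \leq \alpha! \cdot \|\xi\|_k < \infty.
$$
Putting these two ingredients together gives the advertised bound $\|x^\alpha \xi_\calL\|_{\frakh_\calL} \leq \alpha! \|\xi\|_k =: C$, where $C$ depends on $\xi$ and $\alpha$ but not on $\calL$.

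The one subtlety is that the pointwise identity was derived for $\xi \in C_c(\calF_{\Omega_0})$, whereas the conclusion is claimed for all $\xi \in \calS_k(\calF_{\Omega_0})$. I expect this extension step to be the main (and only) obstacle, but it is handled routinely: $C_c(\calF_{\Omega_0})$ is dense in $\calS_k(\calF_{\Omega_0})$ in $\|\cdot\|_k$, the derivations $\nabla^\alpha$ extend continuously to $\calS_k$ by the construction in Proposition \ref{prop: apply_BC}, and multiplication by $x^\alpha$ is a closed operator on $L^2(\R^d)$. Taking limits of a sequence $\xi_n \in C_c(\calF_{\Omega_0})$ with $\xi_n \to \xi$ in $\|\cdot\|_k$ gives both $\nabla^\alpha \xi_n \to \nabla^\alpha \xi$ in $\frakh_\calL$ and $x^\alpha \beta_\calL(\xi_n) \to x^\alpha \beta_\calL(\xi)$ in the graph sense, and the identity—and hence the estimate—passes to the limit.
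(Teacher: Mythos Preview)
Your proof is correct and follows essentially the same approach as the paper, which simply observes that $\nabla^\alpha \xi(\calL,x) = x^\alpha \xi(\calL,x)$ on $C_c(\calF_{\Omega_0})$ and invokes Proposition~\ref{lem:pre-Morita-Frechet-HSpace}. You supply more detail than the paper does---in particular the sign $(-1)^{|\alpha|}$ arising from the localisation map and the density extension to $\calS_k$---but these are exactly the routine points the paper leaves implicit.
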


\begin{lemma} \label{lem:bump_func_props}
Let $\chi$ be a smooth and real-valued bump-function such that $\mathrm{supp}(\chi)\subset B(0;r/2)$, 
$\chi(x)=\chi(-x)$, $\chi(0)=1$ and $\|\chi\|_2 = 1$.
\begin{enumerate}
  \item[(i)] Extend $\chi$ to a function $\chi \in C_c( \calF_{\Omega_0})$ such that 
  $\chi(\calL, x) = \chi(x)$. Then 
  $(\chi\mid \chi)_{\calG_\mathrm{Del}} = 1_{\calG_\mathrm{Del}}$.   
  \item[(ii)] Given $p \in \R^d$, define the function 
$\chi_p \in C_c(\calG_\mathrm{Del},\sigma)$ by $\chi_p(\calL,x) = \chi(x-p)$. 
Then for any $\calL \in \Omega_0$ and $p,q \in \calL$, 
$\omega_\calL( \chi_{p} \ast \chi_q^*) = \delta_{p,q}$.
\end{enumerate}
\end{lemma}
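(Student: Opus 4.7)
The plan is to compute both inner products directly from the definitions of the twisted bimodule inner product and the twisted convolution on $C_c(\calG_{\mathrm{Del}},\sigma)$, and to reduce each vanishing statement to the $r$-uniform discreteness of $(r,R)$-Delone sets together with normalisation of $\sigma$.

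For (i), I would fix $\eta=(\calL_0,y)\in\calG_{\mathrm{Del}}$ and choose the representative $z=(\calL_0,0)\in\calF_{\Omega_0}$, which satisfies $s(z)=\calL_0=r(\eta)$. Parametrising $r^{-1}(\calL_0)\subset\calF$ by $\gamma=(\calL_0,x)$ with $x\in\R^d$, the Haar measure $\nu^{r(z)}$ becomes Lebesgue measure. Since $0,y\in\calL_0$, both $\gamma^{-1}z=(\calL_0-x,-x)$ and $\gamma^{-1}z\eta=(\calL_0-x,y-x)$ lie in $\calF_{\Omega_0}$, so evaluation of the extended $\chi$ is well-defined. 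Using evenness and real-valuedness of $\chi$, the twisted inner product formula should reduce to
\[
(\chi\mid\chi)_{\calG_{\mathrm{Del}}}(\calL_0,y)=\int_{\R^d}\chi(x)\,\chi(y-x)\,\sigma\bigl((\calL_0,x),(\calL_0-x,y-x)\bigr)\,\mathrm{d}x.
\]
When $y=0$ the cocycle argument is $(\gamma,\gamma^{-1})$, so normalisation gives $\sigma=1$ and the integral equals $\|\chi\|_2^2=1$. When $y\neq 0$ with $y\in\calL_0$, $r$-uniform discreteness (together with $0\in\calL_0$) forces $|y|\geq r$; but the integrand requires $|x|<r/2$ and $|y-x|<r/2$, hence $|y|<r$, a contradiction, and the integral vanishes. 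Since $1_{\calG_{\mathrm{Del}}}$ is the characteristic function of $\Omega_0\subset\calG_{\mathrm{Del}}$, (i) follows.

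For (ii), I would use that $\calG_{\mathrm{Del}}$ is étale so the convolution reduces to a sum over $r^{-1}(\calL)$:
\[
(\chi_p*\chi_q^*)(\calL,0)=\sum_{x\in\calL}\chi_p(\calL,x)\,\chi_q^*(\calL-x,-x)\,\sigma\bigl((\calL,x),(\calL-x,-x)\bigr).
\]
Normalisation yields $\sigma=1$ and $\chi_q^*(\calL-x,-x)=\overline{\chi_q(\calL,x)}=\chi(x-q)$, collapsing the sum to $\sum_{x\in\calL}\chi(x-p)\chi(x-q)$. If $p=q\in\calL$, the only $x\in\calL$ within $r/2$ of $p$ is $p$ itself, so the sum equals $\chi(0)^2=1$. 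If $p\neq q$, any contributing $x$ would satisfy $|p-q|\leq|p-x|+|x-q|<r$, contradicting $|p-q|\geq r$, and the sum is $0$.

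The principal obstacle is bookkeeping rather than a genuine difficulty: one must carefully unpack the twisted inner product and convolution formulas, check that every point at which $\chi$ is evaluated indeed lies in $\calF_{\Omega_0}$ (using $0\in\calL_0$ for $\calL_0\in\Omega_0$), and verify that each cocycle factor reduces to $\sigma(\gamma,\gamma^{-1})=1$ via normalisation. Once this is organised, both parts of the lemma follow from a single geometric input, namely that distinct points of an $(r,R)$-Delone set are at distance at least $r$.
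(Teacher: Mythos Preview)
Your proposal is correct and follows essentially the same approach as the paper's proof: both compute the twisted inner product and convolution directly, use normalisation of $\sigma$ to eliminate the cocycle factors, and reduce the vanishing for distinct points to the $r$-uniform discreteness of Delone sets. Your version is slightly more explicit in verifying that all evaluation points lie in $\calF_{\Omega_0}$ and in separating the cases $p=q$ and $p\neq q$, but the argument is the same.
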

\begin{proof}
For part (i) we compute 
$$
 (\chi \mid \chi)_{\calG_\mathrm{Del}} (\calL, y) 
  = \int_{\R^d}\!  \chi(\calL-z, -z) \chi(\calL-z, y-z)\, \sigma((\calL,z),(\calL-z,y-z)) \,\mathrm{d}z.
$$
The integral will be zero unless $-z, \, y-z \in \calL - z \cap B(0;r/2)$. 
But because $y \in \calL$, $\calL \in \Omega_0$ and $\calL$ is uniformly $r$-discrete, 
this will only happen when $y = 0$. Then, because the $2$-cocycle is $1$ when $y=0$ and $\|\chi\|_2 = 1$, 
$(\chi\mid \chi)_{\calG_\mathrm{Del}} (\calL, y) = \delta_{y,0} = 1_{\calG_\mathrm{Del}}(\calL,y)$.

For part (ii) we compute that
\begin{align*}
  (\chi_{p} \ast \chi_{q}^*)(\calL,0) 
  &= \sum_{y\in\calL}  \chi_{p}(\calL,y) \ol{\chi_{q}(\calL,y)} \,\sigma((\calL,y),(\calL-y,-y)) \\
  &= \sum_{y\in \calL} \chi(y-p) \chi(y-q) = \chi(p-q) = \delta_{p,q}
\end{align*}
where we have used that $\mathrm{supp}(\chi)\subset B(0,r/2)$ and $\calL$ is $r$-discrete.
\end{proof}

Given $(\calL,y) \in \calG_\mathrm{Del}$,
define the action
$$
\xi^{(\calL,y)}(x) = \xi(\calL-x,y-x) \, \sigma((\calL-x,y-x),(\calL-y,-y)) \in \frakh_\calL, 
\quad \xi \in L^2(\calF_{\Omega_0}). 
$$
In the case that $\sigma$ is comes from a magnetic twist that is constant over the 
unit space $\Omega_{\Lambda}$, we see that 
$\xi^{(\calL,y)}(x) = e^{ -i \langle x,By \rangle} \xi( \calL-x,y-x)$ with $B$ a real-valued 
skew-adjoint matrix describing the magnetic field. Lemmas  \ref{lem:twisted_loc_properties} and  \ref{lem:bump_func_props} now give the following.
\begin{lemma} \label{lem:gpoid_trans_is_right_action}
\begin{enumerate}
  \item[(i)] Recall the functions $\{\chi_p\}$ from Lemma \ref{lem:bump_func_props}.  Then 
for any $\xi \in L^2(\calF_{\Omega_0})$ and $(\calL,p)\in\calG_\mathrm{Del}$, 
$\xi^{(\calL,p)} = ( \xi \cdot \chi_{p}^* )_\calL$.
  \item[(ii)] Let $e \in {}_{\calF}L^2(\calF_{\Omega_0},\sigma)_{\calG_\mathrm{Del}}$ 
  be such that $(e\mid e)_{C^*_r(\calG_\mathrm{Del})} = 1_{C^*_r(\calG_\mathrm{Del})}$. Then 
for any $\calL \in \Omega_0$ the set $\{e^{(\calL,y)}\}_{y\in\calL}$ is an orthonormal 
system in $\frakh_\calL$.
\end{enumerate}
\end{lemma}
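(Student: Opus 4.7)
The plan is to read both parts off of Lemma \ref{lem:twisted_loc_properties} by identifying the functions $\chi_p$ of Lemma \ref{lem:bump_func_props} with the bump functions $\delta_\alpha$ used in the general twisted localisation lemma. Fix $\calL \in \Omega_0$, which plays the role of the unit $x$. The range fibre decomposes as $r^{-1}(\calL) = \{(\calL,p) : p \in \calL\}$ in $\calG_\mathrm{Del}$, so elements $\alpha \in r^{-1}(\calL)$ correspond bijectively to points $p \in \calL$. Lemma \ref{lem:bump_func_props}(ii) gives exactly $\omega_\calL(\chi_p \ast \chi_q^*) = \delta_{p,q}$ for $p,q \in \calL$, which is the defining property of the bump function $\delta_\alpha$ with $\alpha = (\calL,p)$ from Lemma \ref{lem:delta_fns_on_range_fibre}, so the $\chi_p$ may be used as the $\delta_\alpha$ of the general framework.

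For part (i), I apply the computation in the proof of Lemma \ref{lem:twisted_loc_properties}(i): for $y \in s^{-1}(\calL) \subset \calF_{\Omega_0}$ and $\alpha = (\calL,p)$, the twisted right action gives $(\xi\cdot\chi_p^*)(y) = \xi(y\alpha)\,\sigma(y\alpha,\alpha^{-1})$. Parametrising $y = (\calL-x,-x)$ as in the definition of $\beta_\calL$, a direct computation using the multiplication and inverse formulas on $\calG_\mathrm{Del}$ yields $y\alpha = (\calL-x,p-x)$ and $\alpha^{-1} = (\calL-p,-p)$. Substituting,
\begin{align*}
[\beta_\calL(\xi\cdot\chi_p^*)](x) = \xi(\calL-x,p-x)\,\sigma\big((\calL-x,p-x),(\calL-p,-p)\big),
\end{align*}
which is precisely the defining formula for $\xi^{(\calL,p)}(x)$.

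For part (ii), Lemma \ref{lem:twisted_loc_properties}(i) applied under the assumption $(e\mid e)_{C^*_r(\calG_\mathrm{Del})} = 1_{C^*_r(\calG_\mathrm{Del})}$ immediately yields that $\{e_\calL^\alpha\}_{\alpha \in r^{-1}(\calL)}$ is an orthonormal system in $\frakh_\calL$. Re-indexing by $y \in \calL$ via $\alpha = (\calL,y)$ and invoking part (i) to identify $e_\calL^\alpha = e^{(\calL,y)}$ then completes the proof. There is no substantial obstacle: the only point requiring care is verifying that $\chi_p$ meets the hypotheses used in Lemma \ref{lem:twisted_loc_properties}, but this is exactly Lemma \ref{lem:bump_func_props}, and the remaining work is the elementary groupoid arithmetic carried out above.
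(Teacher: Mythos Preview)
Your proof is correct and follows exactly the approach the paper intends: the paper's own proof is just the one-line remark that Lemmas \ref{lem:twisted_loc_properties} and \ref{lem:bump_func_props} give the result, and you have simply unpacked this by identifying $\chi_p$ with $\delta_{(\calL,p)}$ and carrying out the groupoid arithmetic. Nothing is missing.
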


\begin{prop} \label{prop:abstract_HS_frame}
Let $(e_j)_{j\in J}$ be a (countable) right frame of ${}_{\calF} L^2(\calF_{\Omega_0},\sigma)_{\calG_\mathrm{Del}}$. 
Then for all $\calL\in\Omega_0$, the set $\{e_j^{(\calL,y)}\}_{j\in J, y \in\calL}$
 is a normalised tight frame for $\mathfrak{h}_\calL \cong  L^2(\R^d)$. 
 If $(e_j)_{j\in J} \subset {}_{\calA_k} \calS_k(\calF_{\Omega_0},\sigma)_{\calB_k}$ for 
 $k=0,\cdots , \infty$, then the 
 normalised tight frame $\{e_j^{(\calL,y)}\}$  
 has polynomial decay of at least order $k$.
\end{prop}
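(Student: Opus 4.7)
The plan is to specialise the twisted framework developed in Section \ref{sec:Twsited_Gabor} to the groupoid equivalence $\calF \xleftarrow{r} \calF_{\Omega_0} \xrightarrow{s} \calG_\mathrm{Del}$, with the compact transversal $\Omega_0$ playing the role of the unit space $X = \calH^{(0)}$ in the abstract setting, and with the state $\omega_\calL$ replacing the abstract evaluation state $\omega_x$.

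For the first claim, I would invoke Proposition \ref{prop:twisted_general_loc_frame} at the state $\omega_\calL$, which immediately produces a normalised tight frame $\{(e_j)^\alpha_\calL\}_{j \in J,\, \alpha \in r^{-1}(\calL)}$ of the localised Hilbert space $\frakh_\calL$. Three identifications then convert this into the stated form: the range fibre $r^{-1}(\calL) \subset \calG_\mathrm{Del}$ is canonically in bijection with the point set $\calL$ via $\alpha = (\calL,y) \leftrightarrow y$; the map $\beta_\calL$ of the preceding lemma furnishes a unitary $\frakh_\calL \xrightarrow{\sim} L^2(\R^d)$; and Lemma \ref{lem:gpoid_trans_is_right_action}(i) shows that under these identifications the abstract localised translates $(e_j)^\alpha_\calL = (e_j \cdot \chi_y^*)_\calL$ coincide with the concrete functions $e_j^{(\calL,y)}$ defined in the statement. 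Combining these gives the tight frame property on $L^2(\R^d)$.

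For the polynomial decay, I assume $(e_j)_{j\in J}$ lies in the smoother bimodule $\calS_k(\calF_{\Omega_0},\sigma)$. Since each bump function $\chi_y^* \in C_c(\calG_\mathrm{Del},\sigma) \subset \calB_k$ and the bimodule is closed under the right $\calB_k$-action, the element $e_j \cdot \chi_y^* \in \calS_k(\calF_{\Omega_0},\sigma)$ for every $y \in \calL$. Applying Lemma \ref{lem:dense_localised_subspace_decay} to $\xi = e_j \cdot \chi_y^*$, and noting that under $\beta_\calL$ the abstract derivation $\nabla^\alpha$ pulls back to multiplication by $(-x)^\alpha$ on $L^2(\R^d)$, I obtain
\[ \bigl\|\, x^\alpha\, e_j^{(\calL,y)} \bigr\|_{L^2(\R^d)} = \bigl\| \nabla^\alpha(e_j\cdot \chi_y^*)_\calL \bigr\|_{\frakh_\calL} < \infty \]
for every multi-index $\alpha$ with $|\alpha| \leq k$, which is the claimed polynomial decay of order $k$.

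The main technical point is the cocycle bookkeeping required to match the abstract translate $(e_j \cdot \chi_y^*)_\calL$ with the explicit function $e_j^{(\calL,y)}$, including the compatibility between the derivations $\nabla_j$ on $C_c(\calF_{\Omega_0})$ and the position operators on $L^2(\R^d)$. This is already packaged into Lemma \ref{lem:gpoid_trans_is_right_action} and Lemma \ref{lem:dense_localised_subspace_decay}, whose proofs rely on the normalisation $\sigma(\gamma,\gamma^{-1})=1$ together with the symmetry relations of Lemma \ref{lem:2cocycle_identities}. Once these identifications are in place, the remainder of the argument reduces mechanically to the abstract frame and regularity results established in Sections \ref{subsec:derivations_and_bimodules} and \ref{sec:Twsited_Gabor}.
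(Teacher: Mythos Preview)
Your proposal is correct and follows essentially the same approach as the paper: invoke Proposition \ref{prop:twisted_general_loc_frame} for the tight frame statement, then use Lemma \ref{lem:dense_localised_subspace_decay} for the decay. The only minor variation is that you apply the decay lemma directly to $e_j\cdot\chi_y^*\in\calS_k(\calF_{\Omega_0},\sigma)$ (using closure under the right $\calB_k$-action), whereas the paper applies it to $e_j$ to get decay of $e_j^{(\calL,0)}$ and then observes that the phase-twisted translate $e_j^{(\calL,y)}$ inherits the same decay; both routes are equivalent.
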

\begin{proof}
The first statement is a special case of Proposition \ref{prop:twisted_general_loc_frame}.
Lemma \ref{lem:dense_localised_subspace_decay} ensures that the elements 
$e_j^{(\calL,0)}(x) = e_j(\calL-x,-x)$ have polynomial decay of at least degree $k$. The translation 
$e_j^{(\calL,y)}(x) = e_j(\calL-x,y-x)\, \sigma((\calL-x,y-x),(\calL-y,-y))$ 
will have the same decay properties for all $y\in\calL$.
\end{proof}

We can define an action $\pi_\calL: C(\Omega_{\Lambda}) \rtimes \R^d$ on the localisation 
Hilbert space $\frakh_\calL\cong L^2(\R^d)$ by 
$$
  (\pi_\calL(f)\xi_\calL)(x) = \big( f \cdot \xi)_\calL(x) = (f\cdot \xi)(\calL-x,-x).
$$
Explicitly, we can compute that 
\begin{align*}
  (f \cdot \xi)(\calL-x,-x) &=  \int_{\R^d} f(\calL-x,u-x) \xi_\calL(u)\, \sigma((\calL-x,u-x), (\calL-u,-u) ) \,\mathrm{d}u.
\end{align*}

Let us now consider the localisation $\pi_\calL(p)\frakh_\calL^{\oplus n}$ for 
$p \in M_n( C^*_r(\Omega_{\Lambda}\rtimes \R^d, \sigma))$ a projection.

\begin{thm} \label{thm:Delone_frame_or_basis}
Let $k =0, \cdots, \infty$ and $p=p^*=p^2 \in M_n(\calA_k)$. 
Then there are elements 
$\{e_j\}_{j=1}^m \subset p\calS_k(\calF_{\Omega_0},\sigma)_{\calB_k}^{\oplus n}$
such that for all $\calL \in \Omega_0$,  $\{e_1^{(\calL,y)}, \ldots, e_m^{(\calL,y)}\}_{y\in \calL}$ 
is a normalised tight frame of  $\pi_\calL(p) L^2(\R^d, \C^n)$ with polynomial decay of at least degree $k$. 
The normalised tight frame $\{e_1^{(\calL,y)}, \ldots, e_m^{(\calL,y)}\}_{y\in \calL}$ is 
an orthonormal basis for all $\calL \in \Omega_0$ if and only if 
$p\calS_k(\calF_{\Omega_0},\sigma)_{\calB_k}^{\oplus n} \cong \calB_k^{\oplus m}$.
\end{thm}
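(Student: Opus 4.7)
The strategy is to recognise this theorem as the specialisation of the general twisted proposition at the end of Section~\ref{sec:Twsited_Gabor} to the Delone transversal equivalence $\calF \xleftarrow{r} \calF_{\Omega_0} \xrightarrow{s} \calG_\mathrm{Del}$. All of the required hypotheses have already been verified in the preceding subsection: $\calG_\mathrm{Del}$ is \'etale with compact unit space $\Omega_0$, and ${}_{\calA_k}\calS_k(\calF_{\Omega_0},\sigma)_{\calB_k}$ is a pre-Morita equivalence bimodule for the pre-$C^*$-algebras $\calA_k\subset C^*_r(\calF,\sigma)$ and $\calB_k\subset C^*_r(\calG_\mathrm{Del},\sigma)$ obtained from the position cocycles $c_j$.

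First I would produce the frame: given $p=p^*=p^2\in M_n(\calA_k)$, Lemma~\ref{lem: finite_frame}(2) applied to ${}_{\calA_k}\calS_k(\calF_{\Omega_0},\sigma)_{\calB_k}$ yields a finite right module frame $\{e_1,\dots,e_m\}\subset p\calS_k(\calF_{\Omega_0},\sigma)_{\calB_k}^{\oplus n}$. Then Proposition~\ref{prop:abstract_HS_frame}, applied to this frame, shows that for each $\calL\in\Omega_0$ the set $\{e_j^{(\calL,y)}\}_{j,y}$ is a normalised tight frame of $\pi_\calL(p)L^2(\R^d,\C^n)$. Polynomial decay of order at least $k$ is immediate from Lemma~\ref{lem:dense_localised_subspace_decay} applied to each $e_j$, together with the fact that the magnetic translation $(\calL,y)\mapsto \xi^{(\calL,y)}$ differs from an ordinary shift only by a unimodular phase and hence preserves all the $L^2$ weighted norms $\|x^\alpha\,\cdot\,\|_{L^2}$.

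For the equivalence, I would follow the template of Theorem~\ref{thm:general_Wannier_fgp_nonsmooth}. If $p\calS_k(\calF_{\Omega_0},\sigma)_{\calB_k}^{\oplus n}\cong\calB_k^{\oplus m}$, transporting the standard frame of $\calB_k^{\oplus m}$ back produces $\{e_j\}$ with $(e_j\mid e_k)_{\calG_\mathrm{Del}}=\delta_{j,k}\, 1_{\calG_\mathrm{Del}}$, and a direct computation of $\langle e_j^{(\calL,y_1)},e_k^{(\calL,y_2)}\rangle_\calL$ using Lemmas~\ref{lem:bump_func_props}(ii) and~\ref{lem:gpoid_trans_is_right_action}(i) collapses to $\delta_{j,k}\,\delta_{y_1,y_2}$, showing the tight frame is orthonormal. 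Conversely, if every localisation gives an orthonormal basis, then the argument of Proposition~\ref{prop:frame_to_fgp_module}---evaluating $(e_j\mid e_k)_{\calG_\mathrm{Del}}$ at each $(\calL,y)\in\calG_\mathrm{Del}$ via $\omega_\calL$ with the bump functions $\chi_y$ of Lemma~\ref{lem:bump_func_props} playing the role of the $\delta_\alpha$---forces $(e_j\mid e_k)_{\calG_\mathrm{Del}}=\delta_{j,k}\cdot 1$, whence $(b_j)\mapsto\sum_j e_j\cdot b_j$ is the required unitary $\calB_k$-module isomorphism $\calB_k^{\oplus m}\to p\calS_k(\calF_{\Omega_0},\sigma)_{\calB_k}^{\oplus n}$.

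The main subtlety will be the converse direction at the smooth level: ensuring that the identification $(e_j\mid e_k)_{\calG_\mathrm{Del}}=\delta_{j,k}\cdot 1$ obtained in $C^*_r(\calG_\mathrm{Del},\sigma)$ gives an isomorphism of $\calB_k$-modules, not merely of $C^*$-modules. Because $\{e_j\}\subset\calS_k(\calF_{\Omega_0},\sigma)^{\oplus n}$, the module map $(b_j)\mapsto\sum_j e_j\cdot b_j$ automatically takes $\calB_k^{\oplus m}$ into $\calS_k(\calF_{\Omega_0},\sigma)^{\oplus n}$; spectral invariance (Proposition~\ref{prop: apply_BC}) and the stability of $\calB_k$ under the holomorphic functional calculus then ensure that its inverse $\xi\mapsto ((e_j\mid\xi)_{\calG_\mathrm{Del}})_j$ likewise lands in $\calB_k^{\oplus m}$, upgrading the isomorphism from the $C^*$-level to the pre-$C^*$-level.
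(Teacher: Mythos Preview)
Your proposal is correct and follows essentially the paper's own route: the theorem is stated without proof in the paper as a direct specialisation of the general twisted proposition at the end of Section~\ref{sec:Twsited_Gabor}, with Lemma~\ref{lem:dense_localised_subspace_decay} translating the seminorm bounds into polynomial decay. One minor imprecision: Proposition~\ref{prop:abstract_HS_frame} as stated treats frames of the full module rather than of $p\calS_k(\calF_{\Omega_0},\sigma)^{\oplus n}$, but the argument carries over verbatim (as the paper already notes in Proposition~\ref{prop:tight_frame_on_proj_subspace}), so this does not affect the validity of your argument.
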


For concreteness, we note that the case $p \in M_n(\calA)$ with $\calA= \calA_\infty$ gives 
a normalised tight frame with faster than polynomial decay.

\begin{remark}[Invariance under homotopies of $2$-cocycles] \label{rk:mag_field_htpy}
Let us briefly consider the stability of Theorem \ref{thm:Delone_frame_or_basis} under deformations 
of the groupoid $2$-cocycle $\sigma$ using results from Gillaspy~\cite{Gillaspy15}. Given the 
groupoid $\calF = \Omega_\Lambda \rtimes \R^d$, we can consider the trivial bundle of 
groupoids 
$\calF \times [0,1]$ equipped with the product topology so that groupoid operations preserve the 
fibres and such that $\calF \times [0,1]$ is a locally compact Hausdorff groupoid. A homotopy of 
groupoid $2$-cocycles is a groupoid $2$-cocycle $\omega : (\calF \times [0,1])^{(2)} \to \T$, 
which will give rise to a family of $2$-cocycles $\{\omega_t\}_{t\in[0,1]}$ on $\calF$ that is 
continuous in $t$.

Because $\Omega_\Lambda \rtimes \R^d$ satisfies the Baum--Connes conjecture with coefficients, 
\cite[Theorem 5.1]{Gillaspy15} applies, which says that the evaluation map 
$$
  q_t : C^*_r( (\Omega_\Lambda \rtimes \R^d)\times [0,1], \omega) \to C^*_r( \Omega_\Lambda \rtimes \R^d, \omega_t) 
$$
induces an isomorphism of $K$-theory groups. Composing this isomorphism with the Morita equivalence 
of $\Omega_\Lambda \rtimes \R^d$ with $\calG_\mathrm{Del}$, given a homotopy of $2$-cocycles 
$\sigma_\bullet$ on $\calF \times [0,1]$
(and so on $\calG_\mathrm{Del}\times [0,1]$ by restriction), we can consider finitely 
generated and projective modules $P_\bullet$ over $C^*_r(\calG_\mathrm{Del} \times [0,1], \sigma_\bullet)$. 
Composing with the evaluation map, $P_0$ is a free $C^*_r(\calG_\mathrm{Del}, \sigma_0)$-module 
if and only if $P_1$ is a free $C^*_r(\calG_\mathrm{Del}, \sigma_1)$-module.

Considering the magnetic twists of Example \ref{ex:mag_twist}, we can easily construct homotopies of 
$2$-cocycles via a continuous map $B_\bullet : \Omega_\Lambda \times [0,1] \to \bigwedge^2 \R^d$ 
which restricts to a continuous path $\{B_t\}_{t\in[0,1]}$ of magnetic fields. 
\end{remark}

\subsection{The localisation dichotomy} \label{subsec:Loc_dichotomy}
Decay properties of the normalised tight frame in Theorem \ref{thm:Delone_frame_or_basis} 
come from the seminorms on the dense submodules 
$\calS_k(\calF_{\Omega_0},\sigma)$ over the pre-$C^*$-algebras 
$\calA_k\subset C^*_r(\Omega_{\Lambda} \rtimes \R^d, \sigma)$ 
and $\calB_k \subset C^*_r(\calG_\mathrm{Del}, \sigma)$ for $k = 0, \cdots,\infty$. 
Because each $\calA_k$ is a pre-$C^*$-algebra, given 
$p=p^*=p^2 \in M_n(C^*_r(\Omega_{\Lambda} \rtimes \R^d, \sigma))$ and $0<\epsilon < 1$, there is 
some $p_k = p_k^*=p_k^2 \in M_n(\calA_k)\subset M_n(C^*_r(\Omega_{\Lambda} \rtimes \R^d, \sigma))$ 
such that $\|p-p_k\|<\epsilon$ in $C^*$-norm as well as a unitary $u_{k}$ in  
$M_n(C^*_r(\Omega_{\Lambda} \rtimes \R^d, \sigma))$ such that $p_{k}=u^{*}_{k}pu_{k}$ 
(see \cite[Section 4]{Blackadar}).
Consequently the finitely generated and projective $C^{*}(\calG_{\mathrm{Del}},\sigma)$-modules 
$pL^2(\calF_{\Omega_0},\sigma)^{\oplus n}_{\calG_{\mathrm{Del}}}$ and 
$p_{k} L^2(\calF_{\Omega_0},\sigma)^{\oplus n}_{\calG_{\mathrm{Del}}}$ are isomorphic. 
The module $p_{k} L^2(\calF_{\Omega_0},\sigma)^{\oplus n}_{\calG_{\mathrm{Del}}}$ 
contains the dense submodule $p_{k}\calS_{k}(\calF_{\Omega_0},\sigma)^{\oplus n}_{\calB_k}$. 
Note that we can choose $p_{k}=p_{\infty}$ for all $k\geq 1$.

By comparing $C^*$-module frames for 
$\calS_\infty(\calF_{\Omega_0}, \sigma) \subset \calS_1(\calF_{\Omega_0}, \sigma) 
\subset L^2(\calF_{\Omega_0}, \sigma)$, we can prove a weak version of the 
localisation dichotomy considered in~\cite[Section 5 (arXiv version)]{HaldaneWannier}.

\begin{prop}[Weak localisation dichotomy] \label{prop:weak_loc_dichotomy}
Let $p = p^* = p^2 \in M_n(C^*_r(\Omega_\Lambda \rtimes \R^d, \sigma))$ and $p_{k}=p_{k}^{*}=p_{k}^{2}\in M_{n}(\calA_{k})$ be equivalent projections as above.
 The following statements are equivalent.
 \begin{enumerate}
   \item[(i)] There is a $C^*$-module isomorphism 
   $p L^2(\calF_{\Omega_0},\sigma)^{\oplus n}_{\calG_\mathrm{Del}} \cong C^*_r(\calG_\mathrm{Del},\sigma)^{\oplus m}$.
   \item[(ii)] There are elements $\{w_j\}_{j=1}^m  \subset p_1 \calS_1(\calF_{\Omega_0},\sigma)^{\oplus n}_{\calB_1}$ 
   such that for all $\calL \in \Omega_0$, the collection $\{w_1^{(\calL,y)},\ldots, w_m^{(\calL,y)}\}_{y\in \calL}$ is an 
   orthonormal basis of $\pi_\calL(p) L^2(\R^d, \C^n)$ and for all $y \in \calL$,
\begin{equation} \label{eq:loc_dichotomy_sum}
      \sum_{j=1}^m \int_{\R^d} (1+ |x-y|^2) \big| w_j^{(\calL,y)}(x)\big|^2 \, \mathrm{d}x < \infty.
\end{equation}
   \item[(iii)] There exists 
   $\{e_j\}_{j=1}^m \subset p_\infty \calS_\infty(\calF_{\Omega_0},\sigma)^{\oplus n}_{\calB_\infty}$ 
   where for all $\calL \in \Omega_0$,  $\{e_1^{(\calL,y)},\ldots, e_m^{(\calL,y)}\}_{y\in \calL}$ is an 
   orthonormal basis of $\pi_\calL(p) L^2(\R^d, \C^n)$ with faster than polynomial decay.
 \end{enumerate}
\end{prop}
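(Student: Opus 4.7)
The plan is to establish the cycle (iii) $\Rightarrow$ (ii) $\Rightarrow$ (i) $\Rightarrow$ (iii), making essential use of Theorem \ref{thm:Delone_frame_or_basis} together with the fact that each $\calA_k$ and $\calB_k$ is a pre-$C^*$-algebra, so freeness of finitely generated projective modules is detected at the $C^*$-level via the $K$-theory isomorphism of Proposition \ref{prop:smooth_algebras_give_k_theory_iso}.

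The implication (iii) $\Rightarrow$ (ii) is essentially bookkeeping: by the discussion preceding the proposition we may choose $p_1 = p_\infty$, and the inclusion $\calS_\infty(\calF_{\Omega_0},\sigma) \subset \calS_1(\calF_{\Omega_0},\sigma)$ places each $e_j$ inside $p_1\calS_1(\calF_{\Omega_0},\sigma)^{\oplus n}$; faster than polynomial decay of the translates $e_j^{(\calL,y)}$ makes the integral in \eqref{eq:loc_dichotomy_sum} manifestly convergent.

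For (ii) $\Rightarrow$ (i), I would apply Theorem \ref{thm:Delone_frame_or_basis} at level $k=1$. The hypothesis provides $w_j \in p_1\calS_1(\calF_{\Omega_0},\sigma)^{\oplus n}$ whose translates constitute an orthonormal basis of $\pi_\calL(p)L^2(\R^d,\C^n)$ for every $\calL$; the Delone analogue of Proposition \ref{prop:frame_to_fgp_module} then forces the closed $C^*$-span $W$ of $\{w_j\}$ to be isomorphic to $C^*_r(\calG_\mathrm{Del},\sigma)^{\oplus m}$. Since $W\subset p_1 L^2(\calF_{\Omega_0},\sigma)^{\oplus n}$ and the two modules have identical localisations at every point of the transversal $\Omega_0$, and since the evaluation states $\{\omega_\calL\}_{\calL\in\Omega_0}$ separate elements of the enveloping $C^*$-module, one concludes that $W = p_1 L^2(\calF_{\Omega_0},\sigma)^{\oplus n}$. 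Combining this with the unitary equivalence $p\sim p_1$ in $M_n(C^*_r(\Omega_\Lambda\rtimes\R^d,\sigma))$ yields (i).

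For (i) $\Rightarrow$ (iii), the equivalence $p\sim p_\infty$ gives $p L^2(\calF_{\Omega_0},\sigma)^{\oplus n} \cong p_\infty L^2(\calF_{\Omega_0},\sigma)^{\oplus n}$ as $C^*_r(\calG_\mathrm{Del},\sigma)$-modules, so (i) promotes to freeness of $p_\infty L^2(\calF_{\Omega_0},\sigma)^{\oplus n}$. Because $\calB_\infty$ is a pre-$C^*$-subalgebra of $C^*_r(\calG_\mathrm{Del},\sigma)$ (Proposition \ref{prop: apply_BC}), Lemma \ref{lem: finite_frame} together with the $K$-theoretic invariance of Proposition \ref{prop:smooth_algebras_give_k_theory_iso} transfers freeness to the smooth level: $p_\infty \calS_\infty(\calF_{\Omega_0},\sigma)^{\oplus n} \cong \calB_\infty^{\oplus m}$. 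One more application of Theorem \ref{thm:Delone_frame_or_basis}, this time at $k=\infty$, yields the required family $\{e_j\}_{j=1}^m$ whose translates form an orthonormal basis with faster than polynomial decay. The main obstacle is the middle implication, specifically the identification of the submodule generated by $\{w_j\}$ with $p_1\calS_1(\calF_{\Omega_0},\sigma)^{\oplus n}$ itself rather than merely with an abstractly isomorphic free submodule; this is where separating properties of the transversal evaluation states must be invoked carefully, bearing in mind that the projection representatives $p$ and $p_1$ differ by a unitary conjugation.
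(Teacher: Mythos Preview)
Your cycle (iii) $\Rightarrow$ (ii) $\Rightarrow$ (i) $\Rightarrow$ (iii) via Theorem \ref{thm:Delone_frame_or_basis} at levels $k=\infty,1,\infty$ is precisely the strategy the paper uses, only the paper compresses it to a single sentence (``All statements except for Equation \eqref{eq:loc_dichotomy_sum} immediately follow from Theorem \ref{thm:Delone_frame_or_basis}'') and then verifies the integral condition separately. One small difference: the paper derives \eqref{eq:loc_dichotomy_sum} directly from membership in $\calS_1$, noting that $\nabla_l(w_j\cdot\chi_y^*)$ localises to $(x_l-y_l)w_j^{(\calL,y)}(x)$, so each summand $|x_l-y_l|^2|w_j^{(\calL,y)}(x)|^2$ is integrable; you instead obtain the integral as a consequence of the stronger decay in (iii). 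Both are valid, but the paper's route shows that \eqref{eq:loc_dichotomy_sum} is already intrinsic to condition (ii) rather than inherited from (iii).

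The subtlety you flag in (ii) $\Rightarrow$ (i), namely that the closed $C^*_r(\calG_\mathrm{Del},\sigma)$-span $W$ of $\{w_j\}$ must coincide with $p_1 L^2(\calF_{\Omega_0},\sigma)^{\oplus n}$ rather than merely sit inside it, is real and is glossed over in the paper. Your proposed resolution via separation by the evaluation states is the right one: the reduced norm on $C^*_r(\calG_\mathrm{Del},\sigma)$ is by construction $\sup_{\calL\in\Omega_0}\|\pi_\calL(\cdot)\|$, so $\|\xi\|=\sup_\calL\|\xi_\calL\|$ on the module, and hence the identity $\overline{W_\calL}=\pi_\calL(p_1)\frakh_\calL^{\oplus n}$ for all $\calL$ forces $W=p_1 L^2(\calF_{\Omega_0},\sigma)^{\oplus n}$. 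This is exactly the argument already used in the proof of Theorem \ref{thm:equivalence_to_hilbert_frame}, so no new ingredient is needed.
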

\begin{proof}
All statements except for Equation \eqref{eq:loc_dichotomy_sum} immediately follow from 
Theorem \ref{thm:Delone_frame_or_basis}. To see Equation \eqref{eq:loc_dichotomy_sum}, 
we note that the frame elements are such that $\|w_j \cdot \chi_p^* \|_1 < \infty$ 
for $j\in \{1,\ldots, m\}$, $p \in \R^d$ and $\|\cdot \|_1$ the seminorm on $\calS_1(\calF_{\Omega_0},\sigma)$, 
$$
   \| \xi \|_1 = \| \xi \| + \sum_{l=1}^d \| \nabla_l \xi \|.
$$ 
Passing 
to the localisation, the functions $w_j^{(\calL,y)}$ and 
$[\nabla_l(w_j \cdot \chi_y^*)]_\calL (x) = (x_l - y_l) w_j^{(\calL,y)}(x)$ are in $\pi_\calL(p) L^2(\R^d,\C^n)$ 
for any $y \in \calL$, $j \in \{1,\ldots, m\}$ and $l\in \{1,\ldots, d\}$. We can combine these cases to obtain 
Equation \eqref{eq:loc_dichotomy_sum}.  
\end{proof}

Proposition \ref{prop:weak_loc_dichotomy} should be compared to the Localisation Dichotomy Conjecture 
in~\cite[Section 5 (arXiv version)]{HaldaneWannier}.  We have shown that an $s_\ast$-localised Wannier basis 
for $s_\ast=1$ is equivalent to a Wannier basis with faster than polynomial decay, which in turn is equivalent to 
a free finitely generated and projective module. To improve condition (iii) to an 
exponentially localised Wannier basis will require more analytic arguments that fall outside the 
framework of pre-$C^*$-algebras we have considered. In Section \ref{subsec:Chern_and_Wannier}, we show 
that conditions (i)-(iii) of Proposition \ref{prop:weak_loc_dichotomy} imply that the (even) 
noncommutative Chern numbers vanish.

Since the submission of this manuscript, the preprints~\cite{PanatiDeloneWannier, LuStubbs21} have 
appeared that further develop the Localisation Dichotomy Conjecture for generalised Wannier bases in 
dimension $2$. In particular, \cite[Theorem 1]{LuStubbs21} shows that for gapped spectral projections of a 
magnetic Schr\"{o}dinger operator on $L^2(\R^2)$ (with mild regularity assumptions), an exponentially localised Wannier 
basis is equivalent to an $s_\ast$-localised Wannier basis with $s_\ast>5/2$. Hence, for projections 
$p \in C^*_r(\Omega_\Lambda \rtimes \R^2, \sigma)$ that fall into the framework of~\cite{LuStubbs21}, 
we can improve condition (iii) of Proposition \ref{prop:weak_loc_dichotomy} to exponential decay. 
The magnetic Schr\"{o}dinger operator with Delone atomic potential we consider below 
(see Equation \eqref{eq:Hamil_Delone_pot}) satisfies the regularity assumptions of~\cite{LuStubbs21} 
when $d=2$.

\begin{remark}[Connections to the Balian--Low Theorem]
Theorem \ref{thm:Delone_frame_or_basis} and Proposition \ref{prop:weak_loc_dichotomy} 
plays a similar role to the Balian--Low Theorem in time-frequency analysis. Brieflly, the theorem states 
that if a Gabor system $\{e^{2\pi i mt} g(t-n) \}_{m,n\in \Z}$ forms an orthonormal basis of $L^2(\R)$, then 
either $g$ or the Fourier transform $\hat{g}$ is such that the sum in Equation \eqref{eq:loc_dichotomy_sum} 
with $m=1$ diverges. By the work of Luef~\cite{LuefBL}, the Balian--Low Theorem can also be interpreted 
using finitely generated and projective modules over $C^*(\Z^2)$ and the fact that $C(\T^2)$ has no 
non-trivial projections. See~\cite{LuefBL} for more details and a generalisation to the rotation algebra 
$A_\theta\simeq C^*(\Z^2, \theta)$.
\end{remark}

\subsection{Wannier bases for Hamiltonians on $L^2(\R^d, \C^n)$ with Delone potentials}

We model a particle in $\R^d$ subject to a uniform magnetic 
field perpendicular to the sample. 
We take a magnetic potential $A=(A_1,\ldots,A_d)$ such that 
$A_j\in L^2_\text{loc.}(\R^d)$ and differentiable with
$$  
\frac{\partial}{\partial x_j} A_k - \frac{\partial}{\partial x_k} A_j = \text{const.} 
$$
for all $j,k\in\{1,\ldots,d\}$. 
For simplicity, we consider constant magnetic field strength but more general 
fields are possible (cf. Example \ref{ex:mag_twist}). The magnetic Schr\"{o}dinger operator 
is given by
$$  
H_0 =  \sum_{j=1}^d \!\left(-i\frac{\partial}{\partial x_j} - A_j \right)^2,   
$$
We choose the 
symmetric gauge and define 
$A_j = -\frac{1}{2}\sum\limits_{k=1}^d \theta_{j,k}x_k$ for $j=1,\ldots,d$, 
where $\theta_{j,k}$ is antisymmetric and real. 
Our choice of gauge gives the magnetic translations
$\{U_a\}_{a\in \R^d}$, where for any $a \in \R^d$, 
$$
 [H_0, U_a]=0, \qquad  (U_a\psi)(x) = e^{-i\langle x, \theta a \rangle} \psi(x-a), \,\,\, \psi \in L^2(\R^d).
$$
Given a compact space $\Omega$ with 
action $T: \R^d \to \mathrm{Homeo}(\Omega)$, we can define  the  groupoid $2$-cocycle
$$
 \theta: ( \Omega\rtimes \R^d)^{(2)}\to \mathbb{T}, \qquad  
  \theta( (\omega,x), (T_{-x}\omega, y)) = e^{-i\langle x, \theta y\rangle }, 
$$
which is normalised, $\theta( (\omega, x), (T_{-x}\omega, -x) ) = 1$, and constant over the unit space.

We wish to relate spectral properties of aperiodic Schr\"{o}dinger operators to the Delone 
groupoid. We do this by considering atomic potentials on point sets, 
\begin{equation}  \label{eq:Hamil_Delone_pot}
   H_\Gamma = H_0 +  \sum_{p \in \Gamma} v(\cdot - p),
   \qquad H_0 =   \sum_{j=1}^d \!\Big(-i\frac{\partial}{\partial x_j} - A_j \Big)^2, 
\end{equation}
where $v$ an atomic potential function. Provided the potential 
$V_\Gamma = \sum_{p \in \Gamma} v(\cdot - p)$ is essentially bounded, real valued and 
measurable, $H_\Gamma$ is essentially self-adjoint on the dense core $C_c^\infty(\R^d)$.
 We assume $\Gamma$ is $r$-discrete and 
restrict our potentials to the  $K$-subharmonic 
functions on $\R^d$,
$$
   L^1_{K,r}(\R^d) = \Big\{ f \in L^1(\R^d)\,:\, |f(x)| \leq Kr^{-d} \int_{|x-y|<r/2} \! |f(y)| \,\mathrm{d}y 
  \,\, \text{ for  a.e. }  x \Big\}.
$$

\begin{thm}[\cite{Bel92}] \label{thm:Delone_H_affiliated}
Let $\Lambda$ be an $(r,R)$-Delone set and $v\in L^1_{K,r}(\R^d)$ be a uniformly 
continuous and $\R$-valued atomic 
potential. Then the family of operators $h = \{ H_{\calL}\}_{\calL \in \Omega_{\Lambda}}$ with 
$H_{\calL}$ as in Equation \eqref{eq:Hamil_Delone_pot} is affiliated to the groupoid $C^*$-algebra
$C^*_r(\Omega_{\Lambda} \rtimes \R^d, \theta)$.
\end{thm}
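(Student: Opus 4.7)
My plan is to show that for $z\in\C\setminus\R$ the resolvent family $\{(H_\calL-z)^{-1}\}_{\calL\in\Omega_\Lambda}$ arises from a single element of the multiplier algebra $M(C^*_r(\Omega_\Lambda\rtimes\R^d,\theta))$ under the family of representations $\pi_\calL$ on $L^2(\R^d)$, whence affiliation follows by the standard resolvent criterion for unbounded operators. The argument splits into treating the free magnetic Hamiltonian $H_0$ and the potential $V_\calL$ separately, then reassembling via a resolvent identity. For $H_0$, I would note that the magnetic translations $U_a$ are precisely the unitaries implementing the twisted $\R^d$-action in each $\pi_\calL$ (this is exactly what the constant-field cocycle $\theta$ is designed to produce), so bounded continuous functions of the magnetic momenta $P_j=-i\partial_j-A_j$, and hence of $H_0=\sum_jP_j^2$, arise from elements of the multiplier algebra that depend only on the $\R^d$ coordinate; the resulting operators are independent of $\calL$.

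The main obstacle is producing the potential as a continuous function on the hull. I would define $V:\Omega_\Lambda\to\R$ by $V(\calL):=\sum_{p\in\calL}v(-p)$, so that $V_\calL(x)=V(T_{-x}\calL)$ by $\R^d$-equivariance, and then verify that $V$ is well-defined, bounded, and continuous. The subharmonic bound $|v(x)|\le Kr^{-d}\!\int_{B(x;r/2)}|v(y)|\,\mathrm{d}y$ combined with $r$-uniform discreteness of any $\calL\in\Omega_\Lambda$ yields a tail estimate
$$
\sum_{p\in\calL,\,|p|>M}|v(-p)| \;\le\; C_{K,r}\!\int_{|y|>M-r/2}\!|v(y)|\,\mathrm{d}y,
$$
which is uniform in $\calL$ and tends to zero as $M\to\infty$; this secures absolute convergence, uniform boundedness, and control of tails. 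For continuity at $\calL$, given $\calL'\in U_{\epsilon,M}(\calL)$ from the neighbourhood base in Proposition~\ref{prop:Del_set_topology}, Hausdorff closeness on $B(0;M)$ together with $r$-uniform discreteness forces a bijection $p\mapsto p'$ between $\calL\cap B(0;M-R)$ and the corresponding part of $\calL'$ with $|p-p'|<\epsilon$; uniform continuity of $v$ then controls the bulk contribution while the tail bound controls the remainder. This places $V\in C(\Omega_\Lambda)$, which embeds canonically into $M(C^*_r(\Omega_\Lambda\rtimes\R^d,\theta))$ and acts in each $\pi_\calL$ as multiplication by $V_\calL$.

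To assemble the pieces, since $V_\calL\in L^\infty$ with norm bounded uniformly in $\calL$ and $C_c^\infty(\R^d)$ is a core for $H_0$, the Kato--Rellich theorem gives essential self-adjointness of $H_\calL=H_0+V_\calL$ on this core. For $|\mathrm{Im}\,z|$ sufficiently large, $\|V_\calL(H_0-z)^{-1}\|<1$ uniformly in $\calL$, and the Neumann series
$$
(H_\calL-z)^{-1} \;=\; \sum_{n=0}^\infty (-1)^n (H_0-z)^{-1}\bigl(V_\calL(H_0-z)^{-1}\bigr)^n
$$
converges in operator norm, each term being the $\pi_\calL$-image of a product of elements of $M(C^*_r(\Omega_\Lambda\rtimes\R^d,\theta))$; hence the sum is itself the $\pi_\calL$-image of a single multiplier. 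Analytic continuation in $z$ across the resolvent set then gives the same statement for all $z\in\C\setminus\R$, establishing affiliation. The technical heart of the proof is the continuity of $V$ on the hull, which rests on the interplay between the $L^1_{K,r}$ subharmonicity of $v$ and the $r$-uniform discreteness of Delone sets; everything else is standard perturbation-theoretic packaging.
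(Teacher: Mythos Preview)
The paper does not prove this theorem: it is quoted from Bellissard's work~\cite{Bel92}, and the only commentary is the subsequent remark, which indicates that the original argument passes through the hull $\Omega_{V_\Lambda}$ of the \emph{potential} $V_\Lambda=\sum_{p\in\Lambda}v(\cdot-p)$ and then uses the semi-conjugacy $\Omega_\Lambda\to\Omega_{V_\Lambda}$ to transport affiliation back to $C^*_r(\Omega_\Lambda\rtimes\R^d,\theta)$.

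Your direct approach --- showing that $\calL\mapsto V(\calL)=\sum_{p\in\calL}v(-p)$ is bounded and continuous on $\Omega_\Lambda$ using the $L^1_{K,r}$ subharmonicity together with $r$-uniform discreteness, then assembling the resolvent via a Neumann series --- is essentially the substance of that semi-conjugacy argument unpacked: the continuity of $V$ on $\Omega_\Lambda$ is exactly what makes the map $\Omega_\Lambda\to\Omega_{V_\Lambda}$ well-defined and continuous. The tail bound you write down is correct (the balls $B(-p;r/2)$ are pairwise disjoint for $p\in\calL$, so the subharmonic estimates sum to a single $L^1$-tail), and the continuity step via the neighbourhood base of Proposition~\ref{prop:Del_set_topology} is the right mechanism.

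One point to tighten: affiliation in the sense used here requires the resolvent to lie in $C^*_r(\Omega_\Lambda\rtimes\R^d,\theta)$ itself, not merely in its multiplier algebra. Your Neumann-series argument does in fact deliver this, since $(H_0-z)^{-1}$ already lies in the algebra (its integral kernel in the $\R^d$-direction decays sufficiently, giving an element constant in the $\Omega_\Lambda$-direction), and the algebra is an ideal in its multiplier algebra, so multiplying by $V\in C(\Omega_\Lambda)\subset M(A)$ keeps each term in $A$. You should state $A$ rather than $M(A)$ as the target of the resolvent.
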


\begin{remark}
Theorem \ref{thm:Delone_H_affiliated} is proved by showing that $h = \{ H_{\calL}\}_{\calL \in \Omega_{\Lambda}}$ 
is affiliated to a crossed product groupoid constructed from the continuous hull of the potential 
$V_\Lambda = \sum_{p \in \Lambda} v(\cdot - p)$. In general the 
continuous hull of a Delone set is topologically semi-conjugate to the continuous hull of a 
Delone potential via a surjective map $\Omega_{\Lambda} \to \Omega_{V_\Lambda}$. 
If  $\mathrm{supp}(v)\subset B(0, r_v)$ for some $r_v \leq r$, then the map is injective, 
see~\cite[Section 2.7]{BHZ00}.
\end{remark}

Let us now fix an $(r,R)$-Delone set $\Lambda$ and Hamiltonian of the form 
Equation \eqref{eq:Hamil_Delone_pot} such that the family of Schr\"{o}dinger operators 
$h = \{ H_{\calL}\}_{\calL \in \Omega_{\Lambda}}$ is affiliated to 
the twisted groupoid $C^*$-algebra $C^*_r(\Omega_{\Lambda} \rtimes \R^d, \theta)$. 
With the choice of twist $\theta$, we see that for 
$\pi_\calL : C^*_r(\Omega_{\Lambda} \rtimes \R^d, \theta) \to \calB[L^2(\R^d)]$,
\begin{align*}
  &(\pi_\calL(f)\psi)(x) = \int_{\R^d}\! e^{-i\langle x, \theta u\rangle} f(\calL-x,u-x) \psi(u) \,\mathrm{d}u, 
  &&U_a \pi_\calL(f) U_a^* = \pi_{\calL-a}(f).
\end{align*}

The twisted groupoid algebra $C^*_r(\Omega_{\Lambda} \rtimes \R^d, \theta)$ is Morita 
equivalent to $C^*_r(\calG_\mathrm{Del})$ and for every $(\calL, y) \in \calG_\mathrm{Del}$, 
we have a twisted translation action 
$$
  \psi^{(\calL,y)}(x) = e^{-i \langle x, By \rangle} \psi(x-y) = (U_y \psi)(x).
$$
Hence, the discrete groupoid translations are just the magnetic translations restricted to 
$\calL \in \Omega_0$.

Recall the pre-$C^*$-algebra 
$\calA = \calA_\infty \subset C^*_r(\Omega_{\Lambda}\rtimes \R^d, \theta)$ 
that comes from the Fr\'{e}chet completion 
of $C_c(\Omega_{\Lambda} \times \R^d, \theta)$ in the seminorms defined 
from the derivations $\{\partial_j\}_{j=1}^d$ (Proposition \ref{prop: apply_BC}).

\begin{lemma}
Let $h$ be a self-adjoint element affiliated to $M_n(C^*_r(\Omega_{\Lambda}\rtimes \R^d, \theta))$. 
Suppose 
that $\Delta \subset \sigma(h)$ is a bounded spectral region separated from $\sigma(h)\setminus \Delta$ with 
positive distance. Then $p_\Delta(h) = \chi_\Delta(h) \in M_n(\calA)$.
\end{lemma}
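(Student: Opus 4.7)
The strategy is to reduce the statement to the stability of the pre-$C^*$-algebra $M_n(\calA)$ under the holomorphic functional calculus. Fix $\lambda \in \C \setminus \sigma(h)$ with $\mathrm{Re}(\lambda)$ sufficiently negative; affiliation of $h$ guarantees that $R_\lambda := (h-\lambda)^{-1} \in M_n(C^*_r(\Omega_\Lambda\rtimes\R^d,\theta))$. Because $\Delta$ is bounded and separated from $\sigma(h)\setminus\Delta$ by positive distance, its image under the M\"{o}bius map $z\mapsto (z-\lambda)^{-1}$ is a compact subset of $\sigma(R_\lambda)$ which remains separated from the rest of $\sigma(R_\lambda)$. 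Hence there is a function $F$ holomorphic on a neighborhood of $\sigma(R_\lambda)$, equal to $1$ on the image of $\Delta$ and $0$ on the complementary component, with $p_\Delta(h) = F(R_\lambda)$ by the spectral mapping theorem.

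The task therefore reduces to showing $R_\lambda \in M_n(\calA)$. Granted this, the pre-$C^*$-algebra property of $M_n(\calA) = M_n(\calA_\infty)$ established in Proposition~\ref{prop: apply_BC} (cf.~Definition~\ref{def:smooth_subalgebra}) gives $p_\Delta(h) = F(R_\lambda) \in M_n(\calA)$.

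To verify $R_\lambda \in M_n(\calA)$, I would show $\|\partial^\alpha R_\lambda\| < \infty$ for every multi-index $\alpha \in \N^d$ by iterating the commutator identity
\[ \partial_j R_\lambda = -R_\lambda\, \partial_j(h)\, R_\lambda, \]
extracted by applying $\partial_j$ formally to $(h-\lambda) R_\lambda = 1$. In each fibre representation $\pi_\calL$ the derivation $\partial_j$ acts as a commutator with the position operator $X_j$. For the magnetic Schr\"{o}dinger operator $H_\calL$ of Equation~\eqref{eq:Hamil_Delone_pot} one computes $[X_j, H_\calL] = -2i(-i\partial_{x_j} - A_j)$ and $[X_k, [X_j, H_\calL]] = -2\delta_{jk}$, so all iterated commutators of $h$ with position operators are either $h$-bounded or scalar. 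Expanding $\partial^\alpha R_\lambda$ as a finite sum of products alternating resolvents $R_\lambda$ with such iterated commutators therefore produces a bounded operator whose norm is controlled uniformly in $\calL$, provided $\lambda$ is chosen far enough from $\sigma(h)$ that relative bounds of the magnetic momenta by $|h-\lambda|$ can be absorbed.

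The main obstacle is that $h$ is only affiliated to $M_n(C^*_r(\Omega_\Lambda\rtimes\R^d,\theta))$, not a bounded element, so the formal commutator manipulations must be justified on a common core of smooth vectors (such as $C_c^\infty(\R^d,\C^n)$ in each fibre), and the resulting estimates must be uniform in $\calL \in \Omega_\Lambda$. These justifications rely on the $L^1_{K,r}$-hypothesis on the atomic potential $v$ together with standard form-perturbation bounds for the magnetic momenta in terms of $H_\calL$. Once these uniform norm bounds are in place, each Fr\'echet seminorm $\|R_\lambda\|_k$ is finite, whence $R_\lambda \in M_n(\calA)$ and the reduction of the first paragraph concludes the proof.
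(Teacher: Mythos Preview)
Your approach differs substantially from the paper's. The paper simply observes that since $\Delta$ is isolated one can take $\varphi \in C_c^\infty(\R)$ with $\varphi \equiv 1$ on $\Delta$ and $\varphi \equiv 0$ on $\sigma(h)\setminus\Delta$, so that $p_\Delta(h) = \varphi(h)$ exactly, and then asserts $\varphi(h) \in M_n(\calA)$ without further argument; the implicit background is that for the magnetic Schr\"odinger family of Theorem~\ref{thm:Delone_H_affiliated} the smooth functional calculus is known to land in the smooth crossed product.

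Your route---pass to the resolvent $R_\lambda$, verify $R_\lambda \in M_n(\calA)$ via iterated position-commutators, then invoke holomorphic stability---is more explicit about where the analytic input enters, and is sound for the Schr\"odinger operator. Your identification of the main obstacle (justifying the formal commutator identities for the unbounded $h$, uniformly in $\calL$) is accurate. One technical point worth recording: $C^*_r(\Omega_\Lambda\rtimes\R^d,\theta)$ is non-unital, so the holomorphic function $F$ must satisfy $F(0)=0$; this holds because the M\"obius image of the bounded set $\Delta$ stays bounded away from~$0$ once $\mathrm{Re}(\lambda)$ is sufficiently negative.

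The limitation shared by both arguments is scope. Your commutator computation $[X_j, H_\calL] = -2i(-i\partial_{x_j} - A_j)$ is specific to the magnetic Schr\"odinger operator, whereas the lemma is stated for an arbitrary self-adjoint $h$ affiliated to the crossed product. For a generic affiliated element there is no reason $\partial_j(h)$ should be relatively $h$-bounded, so neither your argument nor the paper's one-line proof establishes the lemma in the generality claimed; both are really proofs for the Schr\"odinger case, which is the only case used downstream.
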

\begin{proof}
Because $\Delta$ is an isolated spectral region, $p_\Delta(h)$ can be approximated arbitrarily well 
by $\varphi(h)$ with $\varphi \in C_c^\infty(\R)$ such that $\varphi(x)=1$ for $x\in \Delta$ and 
$\varphi(x) = 0$ for $x\in \sigma(h)\setminus \Delta$. Hence $\varphi(h) \in M_n(\calA)$.
\end{proof}

Hence, we can adapt Theorem \ref{thm:Delone_frame_or_basis} to the case of Schr\"{o}dinger 
operators on $L^2(\R^d, \C^n)$ with Delone atomic potentials.

\begin{thm} \label{thm:Delone_Wannier_concrete}
Let $\Lambda$ be a $(r,R)$-Delone set and let $H_{\Lambda}$ be a magnetic
Schr\"{o}dinger operator on $L^2(\R^d, \C^n)$ with Delone atomic potential 
as in Equation \eqref{eq:Hamil_Delone_pot} with 
$v\in L^1_{K,r}(\R^d)$ and uniformly continuous. 
Suppose that $\Delta$ is an isolated and bounded spectral region 
of $\sigma(H_{\calL})$ for all $\calL \in \Omega_{\Lambda}$. 
Then there are elements $w_1,\ldots, w_m \in p_\Delta \calS_\infty(\calF_{\Omega_0}, \theta)^{\oplus n}$ 
such that for all $\calL \in \Omega_0$ 
the magnetic translates 
$\{ U_y w^\calL_1, \ldots, U_y w^\calL_m\}_{y \in \calL}$
give a normalised tight frame 
of $p_\Delta(H_\calL) L^2(\R^d, \C^n)$ with faster than polynomial decay.

The frame $\{ U_y w^\calL_1, \ldots, U_y w^\calL_m\}_{y \in \calL}$ is an orthonormal basis of 
$p_\Delta(H_\calL) L^2(\R^d, \C^n)$ for all $\calL \in \Omega_0$ if and only if 
the finitely generated and projective $C^*$-module 
$p_\Delta L^2(\calF_{\Omega_0},\theta)^{\oplus n}_{\calG_\mathrm{Del}} 
\cong C^*_r(\calG_\mathrm{Del},\theta)^{\oplus m}$.
\end{thm}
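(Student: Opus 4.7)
The plan is to obtain the theorem as a concrete incarnation of Theorem \ref{thm:Delone_frame_or_basis} in the setting where the projection is the spectral projection of the Delone Schrödinger operator. The three inputs I need to combine are (a) Theorem \ref{thm:Delone_H_affiliated}, which affiliates $h = \{H_\calL\}_{\calL \in \Omega_\Lambda}$ to $M_n(C^*_r(\Omega_\Lambda \rtimes \R^d, \theta))$; (b) the lemma preceding the statement, which gives $p_\Delta := \chi_\Delta(h) \in M_n(\calA_\infty)$ for any isolated bounded spectral region $\Delta$; and (c) the identification of the twisted groupoid translation on $\frakh_\calL \cong L^2(\R^d)$ with the magnetic translation $U_y$ under the chosen symmetric gauge, which follows from the explicit formula for $\theta$ and Lemma \ref{lem:gpoid_trans_is_right_action}.

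First I would apply the preceding lemma to the affiliated family $h$ and the gapped region $\Delta$ to conclude $p_\Delta \in M_n(\calA_\infty)$. Then I would invoke Theorem \ref{thm:Delone_frame_or_basis} with $k = \infty$ and $p = p_\Delta$ to obtain elements $w_1, \ldots, w_m \in p_\Delta \calS_\infty(\calF_{\Omega_0}, \theta)^{\oplus n}$ such that for every $\calL \in \Omega_0$ the collection $\{w_j^{(\calL, y)}\}_{j=1,\ldots,m,\, y \in \calL}$ is a normalised tight frame of $\pi_\calL(p_\Delta) L^2(\R^d, \C^n)$ with faster than polynomial decay, and which is an orthonormal basis precisely when $p_\Delta \calS_\infty(\calF_{\Omega_0}, \theta)^{\oplus n}_{\calB_\infty} \cong \calB_\infty^{\oplus m}$. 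By Proposition \ref{prop:smooth_algebras_give_k_theory_iso} and Lemma \ref{lem: finite_frame}, freeness over $\calB_\infty$ is equivalent to freeness of the $C^*$-completion $p_\Delta L^2(\calF_{\Omega_0}, \theta)^{\oplus n}_{\calG_\mathrm{Del}}$ as a $C^*_r(\calG_\mathrm{Del}, \theta)$-module, which is the desired final equivalence.

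The remaining steps are translations between the abstract and concrete pictures. For the operator side, I would verify that $\pi_\calL(p_\Delta) = p_\Delta(H_\calL)$: affiliation of $h$ to $M_n(C^*_r(\Omega_\Lambda \rtimes \R^d, \theta))$ means exactly that $\pi_\calL$ intertwines the bounded Borel functional calculus of $h$ with that of its representative $H_\calL = \pi_\calL(h)$, so $\pi_\calL(\chi_\Delta(h)) = \chi_\Delta(H_\calL)$. For the translation side, the cocycle computed in Example \ref{ex:mag_twist} for a constant magnetic field gives $\theta((\calL-x, y-x), (\calL-y, -y)) = e^{-i \langle x-y, \theta(-y)\rangle} = e^{-i\langle x, \theta y\rangle}$ (using antisymmetry of $\theta$), which combined with $w_j^{(\calL, y)}(x) = w_j(\calL - x, y - x)\, \theta((\calL - x, y - x), (\calL - y, -y))$ matches exactly the magnetic translation $(U_y (w_j)_\calL)(x) = e^{-i\langle x, \theta y\rangle}(w_j)_\calL(x - y)$.

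The main obstacle is the identification $\pi_\calL(p_\Delta) = p_\Delta(H_\calL)$: this is where the affiliation hypothesis genuinely has to be used, because $H_\calL$ is unbounded while the $C^*$-algebraic framework manipulates the bounded function $p_\Delta$ of $h$. The cleanest way I would handle this is to approximate $\chi_\Delta$ by $\varphi \in C_c^\infty(\R)$ that equals $1$ on $\Delta$ and $0$ on $\sigma(H_\calL) \setminus \Delta$ (as in the preceding lemma); since $\varphi \in C_0(\R)$ and $h$ is affiliated to $M_n(C^*_r(\Omega_\Lambda \rtimes \R^d, \theta))$, $\varphi(h)$ lies in $M_n(\calA_\infty)$ and $\pi_\calL(\varphi(h)) = \varphi(H_\calL) = p_\Delta(H_\calL)$, with the last equality by the gap assumption. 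Everything else — the decay of the $w_j^{(\calL, y)}$ from Lemma \ref{lem:dense_localised_subspace_decay}, the orthogonality computations and the free-module characterisation — has already been established in Section \ref{sec:Twsited_Gabor} and the previous subsection, so no new estimates are required.
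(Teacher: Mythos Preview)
Your proposal is correct and follows essentially the same route as the paper: apply the preceding lemma to get $p_\Delta \in M_n(\calA_\infty)$ from the gap assumption, invoke Theorem \ref{thm:Delone_frame_or_basis} with $k=\infty$, and identify $\pi_\calL(p_\Delta)\frakh_\calL^{\oplus n}$ with $p_\Delta(H_\calL)L^2(\R^d,\C^n)$. You have in fact made explicit several points the paper leaves tacit, namely the verification $w_j^{(\calL,y)} = U_y(w_j)_\calL$ via the cocycle formula, the passage $\pi_\calL(\chi_\Delta(h)) = \chi_\Delta(H_\calL)$ through affiliation, and the equivalence of freeness over $\calB_\infty$ with freeness over $C^*_r(\calG_\mathrm{Del},\theta)$; all of these are handled earlier in the paper and your treatment of them is accurate.
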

\begin{proof}
By the spectral gap assumption, the family of spectral projections 
$\{p_\Delta( H_\calL)\}_{\calL\in \Omega_{\Lambda}}$ give an element 
$p_\Delta(h) \in M_n(\calA)$. As such, we can apply Theorem \ref{thm:Delone_frame_or_basis} 
which gives the faster than polynomially decaying tight frame or orthonormal basis of the localisation 
$\pi_\calL(p_\Delta) \frakh_\calL^{\oplus n} = p_\Delta(H_\calL) L^2(\R^d, \C^n)$. 
\end{proof}

\begin{remarks}
\begin{enumerate}
  \item[(i)] The existence of an isomorphism 
$p_\Delta L^2(\calF_{\Omega_0},\theta)^{\oplus m}_{\calG_\mathrm{Del}} 
\cong C^*_r(\calG_\mathrm{Del},\theta)^{\oplus m}$ is a $K$-theoretic statement 
and implies that $[p_\Delta] = m[1] \in K_0(C^*_r(\calG_\mathrm{Del},\theta))$. 
  \item[(ii)] If we take a deformation of the magnetic field $\{\theta_t\}_{t\in[0,1]}$ such that 
  the $\R$-valued $2$-cocycle $\omega_t( x,y) = \langle x, \theta_t y \rangle$ is continuous 
  in $t$, we obtain a homotopy of $2$-cocycles in the sense of Remark \ref{rk:mag_field_htpy}. 
  Therefore, Theorem \ref{thm:Delone_Wannier_concrete} is stable under deformations of 
  the magnetic field provided that the spectral gap remains open throughout the deformation.
\end{enumerate}
\end{remarks}

\subsection{Obstruction to localised Wannier basis by (noncommutative) Chern numbers} \label{subsec:Chern_and_Wannier}

Let us briefly recall the periodic setting.
If the atomic potential $V_\Lambda = \sum_{p\in \Lambda} v(\cdot - p)$ is 
such that $\Lambda$ is a periodic and co-compact group $G$, then $H_\Lambda$ is 
affiliated to the algebra
$$
  C^*_r( \Omega_\Lambda \rtimes \R^d, \theta) \cong C^*_r( (\R^d / G) \rtimes \R^d, \theta ) \cong 
  C^*_r(G, \theta) \otimes \mathbb{K}.
$$
In the case $G = \Z^d$, the  the non-triviality of finitely generated and 
projective $C^*_r(\Z^d, \theta)$-modules with $\theta$ rational can be examined by studying the Chern classes 
of the spectral subspaces of the Hamiltonian viewed as a complex vector bundle over the 
Brillouin torus, $\wh{\Z}^d$. In the aperiodic setting, we can 
use tools from noncommutative geometry to carry out an analogous argument. Indeed, 
noncommutative Chern numbers for Hamiltonians affiliated to $C(\Omega)\rtimes_\theta \R^d$ and 
$C^*_r(\calG_\mathrm{Del},\theta)$ have already been studied~\cite{BRCont, BP17, BMes}.

Throughout this section, we will regularly take advantage of the equivalence between the 
continuous hull $\Omega_{\Lambda}\rtimes \R^d$ and $\calG_{\mathrm{Del}}$, which 
gives an isomorphism 
$K_0(C^*_r(\Omega_\Lambda \rtimes \R^d,\theta)) \cong K_0(C^*_r(\calG_\mathrm{Del}, \theta))$.

We first recall the top degree noncommutative Chern numbers for aperiodic or disordered 
magnetic Schr\"{o}dinger Hamiltonians with a spectral gap. To do this, we recall the trace 
per unit volume on $L^2(\R^d)$. Let $\Lambda_j$ be an increasing sequence of sets 
that converge to $\R^d$ in an appropriate sense, e.g. $\Lambda_j = [-j,j]^d$. Then 
for any $a \in \calB(L^2(\R^d))$, 
$$
  \Tr_\mathrm{Vol}( a) =  \lim_{j\to\infty}  \frac{1}{\mathrm{Vol}(\Lambda_j)} 
  \Tr( \Pi_{\Lambda_j} a), \qquad \Pi_{\Lambda_j}: L^2(\R^d) \to L^2(\Lambda_j).
$$

\begin{prop}[\cite{BRCont}]
Fix a probability measure $\bP$ on $\Omega_{\Lambda}$ that is invariant and ergodic under 
the $\R^d$-action and let $S_d$ denote the permutation group of $\{1,\ldots,d\}$. 
If $d>0$ is even and $p = p^* =p^2 \in M_n(\calA)$, then for almost all $\calL \in \Omega_\Lambda$ 
the functional
\begin{equation} \label{eq:NC_Chern_even}
C_d(p) =  \frac{(-2\pi i)^{d/2}}{(d/2)!} \sum_{\tau \in S_d} 
  (-1)^\tau \, (\Tr_{\C^n}\otimes \Tr_{\mathrm{Vol}}) \bigg(\pi_\calL(p)
     \prod_{j=1}^d [X_{\tau(j)},\pi_\calL(p)] \bigg),
\end{equation}
is integer valued and almost surely constant in $\Omega_{\Lambda}$.
\end{prop}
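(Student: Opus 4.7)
The plan is to identify $C_{d}(p)$ with the pairing of the $K$-theory class $[p]\in K_{0}(\calA)$ against the Chern character of an even finitely summable Fredholm module over $L^{2}(\R^{d})\otimes\C^{n}$, and then to combine Connes' index pairing with the Birkhoff ergodic theorem for the $\R^{d}$-action on $(\Omega_{\Lambda},\bP)$.

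First I would observe that, in every representation $\pi_{\calL}$, commutation with the position operator $X_{j}$ on $L^{2}(\R^{d})$ implements the derivation $\partial_{j}$ on $\calA$ up to an overall factor of $i$. Consequently the operator $\pi_{\calL}(p)\prod_{j=1}^{d}[X_{\tau(j)},\pi_{\calL}(p)]$ equals $\pi_{\calL}\big(p\cdot\partial_{\tau(1)}p\cdots\partial_{\tau(d)}p\big)$ up to signs and a power of $i$ that can be absorbed into the prefactor of $C_{d}$. Since $p\in M_{n}(\calA)$ and $\calA$ is stable under each $\partial_{j}$, this is a well-defined element of $M_{n}(\calA)$, trace-class under $\Tr_{\C^{n}}\otimes \Tr_{\mathrm{Vol}}$ by the differential seminorm estimates of Proposition \ref{prop: apply_BC}.

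Next I would invoke Bellissard's trace-per-unit-volume formula, the noncommutative Birkhoff ergodic theorem for the $\R^{d}$-action: for any $a\in\calA$ and $\bP$-almost every $\calL$, the limit defining $(\Tr_{\C^{n}}\otimes\Tr_{\mathrm{Vol}})(\pi_{\calL}(a))$ exists and equals
\[ \mathcal{T}(a) := \int_{\Omega_{\Lambda}}\Tr_{\C^{n}}\big(a(\omega,0)\big)\,d\bP(\omega), \]
which is the canonical $\R^{d}$-invariant trace on $M_{n}(C^{*}_{r}(\Omega_{\Lambda}\rtimes\R^{d},\theta))$. It is precisely the ergodicity of $\bP$ that makes this limit $\bP$-almost surely independent of $\calL$, which gives the almost-sure constancy asserted in the proposition.

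Finally, combining the previous two steps shows that the $\bP$-almost sure value of $C_{d}(p)$ is the pairing of $[p]\in K_{0}(\calA)$ with the cyclic $d$-cocycle
\[ \phi(a_{0},\ldots,a_{d}) = \sum_{\tau\in S_{d}}(-1)^{\tau}\,\mathcal{T}\big(a_{0}\,\partial_{\tau(1)}a_{1}\cdots\partial_{\tau(d)}a_{d}\big). \]
Up to normalisation, $\phi$ is the Chern character of the even Fredholm module built from the position operators $X_{1},\ldots,X_{d}$ together with the standard $\Z_{2}$-grading on $L^{2}(\R^{d},\C^{n})\otimes\C^{2^{d/2}}$ coming from the Clifford algebra $\Cl_{d}$ (this is where the parity $d$ even enters). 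Connes' index pairing then identifies the value of $C_{d}(p)$ with the Fredholm index of a chiral Dirac-type operator compressed by $p$, forcing it to be integer valued. The main technical obstacle will be verifying the $(d+1)$-summability of this Fredholm module on $\calA$, i.e.\ controlling the Schatten class behaviour of the $[X_{j},\pi_{\calL}(\partial^{\alpha}p)]$ uniformly in $\calL$; this is exactly the role played by $p\in M_{n}(\calA_{\infty})$ and the differential seminorms, which guarantee the requisite decay of derivatives of the kernel of $p$.
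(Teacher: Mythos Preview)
The paper does not prove this proposition; it is stated with a citation to \cite{BRCont} and no argument is given. Your sketch is a faithful outline of the proof in that reference: one first uses $[X_{j},\pi_{\calL}(a)]=i\,\pi_{\calL}(\partial_{j}a)$ and Birkhoff's ergodic theorem to replace $\Tr_{\mathrm{Vol}}\circ\pi_{\calL}$ by the invariant trace $\calT$ for $\bP$-almost every $\calL$ (yielding almost-sure constancy), then recognises the resulting expression as the Connes pairing of $[p]$ with the cyclic $d$-cocycle built from $(\calT,\partial_{1},\ldots,\partial_{d})$, and finally obtains integrality via the local index formula for the even spectral triple over $\calA$ with Dirac operator $\sum_{j}X_{j}\otimes\gamma_{j}$ on $L^{2}(\R^{d},\C^{n})\otimes\C^{2^{d/2}}$. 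One small inaccuracy: the phrase ``trace-class under $\Tr_{\C^{n}}\otimes\Tr_{\mathrm{Vol}}$'' is not quite right, since $\Tr_{\mathrm{Vol}}$ is not a semifinite trace in the ordinary sense; what one actually needs (and what \cite{BRCont} verifies) is that the relevant products lie in the Dixmier ideal, or equivalently that the spectral triple is $(d,\infty)$-summable, which is the content of your final paragraph.
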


The number $C_d$ almost surely defines a homomorphism 
$K_0(C^*_r(\Omega_{\Lambda}\rtimes \R^d, \theta)) \to \Z$, which we can also 
consider as a homomorphism $K_0(C^*_r(\calG_\mathrm{Del},\theta)) \to \Z$. 
In particular 
$C_d(p) = C_d(p')$ if $[p]=[p'] \in K_0(C^*_r(\calG_\mathrm{Del},\theta))$ and 
$C_d(p)=0$ if $[p]=m[1] \in K_0(C^*_r(\calG_\mathrm{Del},\theta))$. 

For systems 
with $d\geq 3$, we may also wish to consider lower-dimensional invariants. These 
invariants are not integer-valued in general, but can still be used to study the 
topology of gapped spectral projections. 
We fix a probability measure $\bP$ on $\Omega_{\Lambda}$ that is 
invariant and under the $\R^d$-action. Then we recall the noncommutative calculus for 
$\calA \subset C^*_r(\Omega_{\Lambda} \rtimes \R^d, \theta)$,
\begin{align*}
  \calT( f ) = \int_{\Omega_{\Lambda}} \! f(\calL, 0) \,\mathrm{d}\bP, \qquad
  (\partial_j f)(\calL,x) = x_j f(\calL,x), \quad f \in \calA, \,\, j\in\{1,\ldots,d\}.
\end{align*}

\begin{prop}[\cite{BRCont}]
Let $p=p^* =p^2 \in M_n(\calA)$ and $\bP$ a probability measure on $\Omega_{\Lambda}$ that is 
invariant and under the $\R^d$-action. Then for any $k \leq d$ an even integer, the functional  
\begin{equation} \label{eq:weak_Chern}
  C_k(p) = \frac{(-2\pi i)^{k/2}}{(k/2)!} \sum_{\tau\in S_k} 
  (-1)^\tau \, (\Tr_{\C^n}\otimes \calT) \bigg( p
     \prod_{j=1}^k \partial_{\tau(j)} p \bigg),
\end{equation}
defines a homomorphism 
$K_0(C^*_r(\Omega_\Lambda \rtimes \R^d, \theta))  \to \R$.
If $\bP$ is ergodic under the $\R^d$-action and 
$k=d$, then $C_k(p) = C_d(p) \in \Z$ from Equation \eqref{eq:NC_Chern_even} almost surely.
\end{prop}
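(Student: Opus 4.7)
The plan is to view $C_k$ as the Connes pairing of a cyclic $k$-cocycle on $\calA = \calA_\infty$ with the $K_0$-class of $p$. Concretely, I would consider the multilinear functional
\[
\tau_k(a_0, a_1, \ldots, a_k) = \sum_{\sigma \in S_k} (-1)^\sigma \calT\big(a_0 \, \partial_{\sigma(1)}(a_1)\cdots \partial_{\sigma(k)}(a_k)\big)
\]
on $\calA$, so that $C_k(p)$ is (up to the normalising constant $(-2\pi i)^{k/2}/(k/2)!$ and the $\mathbb{C}^{n}$-trace) the evaluation $\tau_k(p,\ldots,p)$. For this to be a cyclic $k$-cocycle in the sense of Connes I need three inputs, each of which is essentially formal on $\calA$: that $\calT$ is a trace (this follows from $\R^d$-invariance of $\bP$ and a direct computation in the twisted convolution, where the magnetic phase in $\theta$ cancels since it vanishes on the diagonal of $\calG^{(2)}$); that each commuting $*$-derivation $\partial_j$ satisfies the Leibniz rule; and that $\calT \circ \partial_j = 0$, which is immediate because $(\partial_j a)(\calL,0) = 0\cdot a(\calL,0) = 0$. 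Given these, standard cyclic cohomology gives that $\tau_k$ is cyclic and Hochschild-closed, and the Connes Chern character delivers a well-defined, additive pairing $C_k : K_0(\calA) \to \R$. Promoting this to $K_0(C^*_r(\Omega_\Lambda \rtimes \R^d, \theta))$ is done via the isomorphism of Proposition \ref{prop:smooth_algebras_give_k_theory_iso}.

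For the comparison with Equation \eqref{eq:NC_Chern_even} when $k=d$ and $\bP$ is ergodic, the two ingredients are
\[
\pi_\calL(\partial_j a) = -[X_j, \pi_\calL(a)], \qquad \calT(a) = \Tr_{\mathrm{Vol}}(\pi_\calL(a)) \text{ for } \bP\text{-a.e. } \calL.
\]
The first is a direct calculation in the integral kernel of $\pi_\calL$: writing out $(\pi_\calL(\partial_j a)\psi)(x)$ produces a factor $(u_j - x_j)$ against the same kernel as $\pi_\calL(a)$, and the magnetic phase $e^{-i\langle x,\theta u\rangle}$ is unaffected by multiplication by coordinates, so the identity drops out. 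The second is the Shubin trace formula: the diagonal of the kernel of $\pi_\calL(a)$ at $x$ is $a(\calL-x, 0)$ (the phase $e^{-i\langle x,\theta x\rangle}$ equals $1$ as $\theta$ is antisymmetric), so that
\[
\frac{1}{\vol(\Lambda_j)}\Tr(\Pi_{\Lambda_j}\pi_\calL(a)) = \frac{1}{\vol(\Lambda_j)}\int_{\Lambda_j} a(\calL-x,0)\,\mathrm{d}x,
\]
and Birkhoff's ergodic theorem identifies the limit with $\int_{\Omega_\Lambda}a(\calL',0)\,\mathrm{d}\bP(\calL') = \calT(a)$ for $\bP$-a.e.\ $\calL$. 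Substituting $a_0 = p$ and $a_j = \partial_{\sigma(j)}(p)$ into $\tau_k$, the $k$ sign flips from $\partial_j \leftrightarrow [X_j,\cdot]$ combine to $(-1)^k = +1$ for even $k$, so Equations \eqref{eq:weak_Chern} and \eqref{eq:NC_Chern_even} evaluate to the same number almost surely, and the integer-valuedness is then inherited from the earlier proposition.

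The main obstacle is making the Shubin formula and the exchange of limits rigorous on products of commutators rather than on a single $a$. What one really needs is that $\pi_\calL(p)\prod_{j=1}^d [X_{\sigma(j)}, \pi_\calL(p)]$ is of trace class with a $\bP$-uniformly integrable finite-volume trace density, so that the Birkhoff theorem applies term-by-term inside the antisymmetrised sum. This is precisely where membership of $p$ in $M_n(\calA_\infty)$ is used: the derivations $\partial_1, \ldots, \partial_d$ enforce faster than polynomial off-diagonal decay of the integral kernel of $p$, from which a dominated-convergence argument on the covariant von Neumann algebra $L^\infty(\Omega_\Lambda, \bP)\rtimes \R^d$ delivers the almost-sure equality. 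Approximating a general projection $p \in M_n(C^*_r(\Omega_\Lambda\rtimes \R^d, \theta))$ by a norm-close projection in $M_n(\calA_\infty)$, via holomorphic functional calculus as in Lemma \ref{lem: finite_frame}, reduces the general case to this smooth one without changing the $K_0$-class.
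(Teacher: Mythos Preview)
The paper does not supply a proof of this proposition; it is stated with a citation to \cite{BRCont} and no argument is given. Your outline is the standard route and is essentially what the cited reference does: one verifies that $\tau_k$ is a cyclic $k$-cocycle on $\calA$ using the trace property of $\calT$ (from $\R^d$-invariance of $\bP$), the Leibniz rule for the commuting $*$-derivations $\partial_j$, and the vanishing $\calT\circ\partial_j=0$; the Connes pairing then gives a homomorphism $K_0(\calA)\to\R$, which is transported to $K_0(C^*_r(\Omega_\Lambda\rtimes\R^d,\theta))$ via Proposition~\ref{prop:smooth_algebras_give_k_theory_iso}. For $k=d$ with $\bP$ ergodic, the identification with Equation~\eqref{eq:NC_Chern_even} goes through the covariance identity $\pi_\calL(\partial_j a)=-[X_j,\pi_\calL(a)]$ and the Birkhoff/Shubin formula $\calT(a)=\Tr_{\mathrm{Vol}}(\pi_\calL(a))$ $\bP$-a.s., exactly as you describe; the sign count $(-1)^d=1$ is correct. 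Your identification of the analytic crux---controlling the finite-volume trace of the product of commutators uniformly so that the ergodic theorem applies---is accurate, and this is where membership of $p$ in $M_n(\calA_\infty)$ is genuinely used in \cite{BRCont}.
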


We again note that if $[p]=m[1] \in K_0(C^*_r(\calG_\mathrm{Del},\theta) )$, 
then  $C_k(p) = 0$ for any $k\geq 2$.
Combining our results on the noncommuative Chern numbers with Theorem \ref{thm:Delone_frame_or_basis}
and the weak localisation dichotomy (Proposition \ref{prop:weak_loc_dichotomy}),
we have the following. 

\begin{cor} \label{cor:nonzero_Chern_bad_Wannier}
Let $p=p^* =p^2 \in M_n(\calA)$ and $\bP$ a probability measure on $\Omega_{\Lambda}$ that is 
invariant under the $\R^d$-action. If $C_k(p)$ from Equation \eqref{eq:weak_Chern} 
is non-zero for some $k \geq 2$, then for any $\calL \in \Omega_0$ there can not be Wannier basis 
 of $\pi_\calL(p) L^2(\R^d,\C^n)$ constructed from magnetic translations in $\calL$ of 
elements $\{w_j\}_{j=1}^m \subset p\calS_1(\calF_{\Omega_0}, \theta)_{\calB_1}^{\oplus n}$ such that 
 for any $y \in \calL$
$$
   \sum_{j=1}^m \int_{\R^d} (1+ |x-y|^2) \big|  (U_y w_j^\calL)(x) \big|^2 \, \mathrm{d}x < \infty.
$$
\end{cor}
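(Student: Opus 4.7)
The plan is to argue by contraposition, combining the weak localisation dichotomy with the homomorphism property of the noncommutative Chern numbers. Suppose for contradiction that a family $\{w_j\}_{j=1}^m \subset p\calS_1(\calF_{\Omega_0},\theta)_{\calB_1}^{\oplus n}$ exists such that the magnetic translates $\{U_y w_j^\calL\}_{j,y\in \calL}$ form an orthonormal basis of $\pi_\calL(p) L^2(\R^d,\C^n)$ satisfying the stated weighted $L^1$-bound for every $\calL \in \Omega_0$; the goal is to deduce $C_k(p)=0$, contradicting the hypothesis.

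The first step is to observe that this hypothesis is exactly condition (ii) of Proposition \ref{prop:weak_loc_dichotomy}. By the explicit form of the magnetic twist in Example \ref{ex:mag_twist} and the identification in Lemma \ref{lem:gpoid_trans_is_right_action}, the groupoid translation $w_j^{(\calL,y)}$ coincides with $U_y w_j^\calL$ inside $\frakh_\calL \cong L^2(\R^d)$, so both the orthonormality and the weighted integrability requirements match statement (ii) verbatim. The equivalence (ii) $\Leftrightarrow$ (i) of Proposition \ref{prop:weak_loc_dichotomy} then yields a $C^*$-module isomorphism
$$
    pL^2(\calF_{\Omega_0},\theta)_{\calG_\mathrm{Del}}^{\oplus n} \cong C^*_r(\calG_\mathrm{Del},\theta)^{\oplus m}.
$$

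The second step is to convert this into a $K$-theoretic identity and finish. Under the Morita equivalence of $C^*_r(\Omega_\Lambda \rtimes \R^d,\theta)$ and $C^*_r(\calG_\mathrm{Del},\theta)$, the displayed isomorphism becomes $[p] = m[1]$ in $K_0(C^*_r(\calG_\mathrm{Del},\theta)) \cong K_0(C^*_r(\Omega_\Lambda \rtimes \R^d,\theta))$. Applying the remark recorded immediately before the corollary — that $C_k$ vanishes on $m[1]$ for $k \geq 2$, because each $\partial_j$ annihilates the class of the unit so the integrand in Equation \eqref{eq:weak_Chern} is identically zero on a representative of $[1]$, together with the homomorphism property of $C_k$ — gives $C_k(p) = m C_k([1]) = 0$, contradicting $C_k(p) \neq 0$.

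The only genuinely substantive step is the first: verifying that the hypotheses of the corollary translate into condition (ii) of the weak dichotomy. Once that identification is in place, the remainder is a short chain of citations — the dichotomy, the $K_0$-isomorphism from Morita equivalence, and the vanishing of $C_k$ on the class of the unit — and no new analytic input is required.
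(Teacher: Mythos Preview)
Your proposal is correct and follows exactly the route the paper intends: the corollary is stated immediately after the remark that $C_k$ vanishes on classes $m[1]$, with the paper's own justification being the single sentence ``Combining our results on the noncommutative Chern numbers with Theorem \ref{thm:Delone_frame_or_basis} and the weak localisation dichotomy (Proposition \ref{prop:weak_loc_dichotomy}), we have the following.'' Your contrapositive argument --- identify the hypothesis with condition (ii) of Proposition \ref{prop:weak_loc_dichotomy}, pass to the free module via (ii)$\Rightarrow$(i), then invoke $C_k(m[1])=0$ --- is precisely this combination spelled out.
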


\subsection{Deformation of the Delone atomic potential}

We would like to consider the stability  of our results on aperiodic 
Schr\"{o}dinger operators $H_\Lambda$ when the Delone set $\Lambda$ is deformed 
(e.g. from an aperiodic set to a periodic lattice). Deforming a Delone set $\Lambda$ 
will change the crossed product groupoid and the $K$-theory may change substantially 
as the Delone set changes. However, we will show the pairings in cyclic cohomology considered in 
Section \ref{subsec:Chern_and_Wannier} are unaffected by such deformations.

\begin{lemma} \label{lem:continuity_lattice_potential}
Let  $v \in C_c(\R^d)$ be a continuous atomic potential with compact support.
If $\{\Lambda_t\}_{t\in [0,1]}$ is continuous path of $(r,R)$-Delone sets in 
$\mathrm{Del}_{(r,R)}$, 
then the path of Schr\"{o}dinger operators 
$\{H_{\Lambda_t}\}_{t\in [0,1]}$ is norm-continuous in the resolvent topology.
\end{lemma}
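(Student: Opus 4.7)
The plan is to combine the second resolvent identity with the continuous-hull formulation of the atomic potential and spatial decay estimates on the resolvents. Fix $z\in\C\setminus\R$; by self-adjointness of each $H_{\Lambda_t}$, one has the uniform bound $\|(H_{\Lambda_t}-z)^{-1}\|\leq |\operatorname{Im} z|^{-1}$, and the second resolvent identity reads
\[
(H_{\Lambda_t}-z)^{-1}-(H_{\Lambda_s}-z)^{-1}=(H_{\Lambda_t}-z)^{-1}\bigl(V_{\Lambda_s}-V_{\Lambda_t}\bigr)(H_{\Lambda_s}-z)^{-1},
\]
so the task reduces to controlling the operator norm of the sandwich on the right as $t\to s$.

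Since $v$ has compact support, I would introduce $\tilde V\colon\mathrm{Del}_{(r,R)}\to\R$ by $\tilde V(\calL):=\sum_{p\in\calL}v(-p)$. This is a finite sum whose cardinality is uniformly bounded in $\calL$ by $r$-discreteness, and the topology of Proposition \ref{prop:Del_set_topology} together with the uniform continuity of $v$ shows that $\tilde V$ is continuous, hence uniformly continuous since $\mathrm{Del}_{(r,R)}$ is compact metrizable. The covariance relation $V_\calL(x)=\tilde V(\calL-x)$ and continuity of the path $t\mapsto\Lambda_t$ in $\mathrm{Del}_{(r,R)}$ then give local uniform convergence of the potentials: for every bounded set $K\subset\R^d$, $\sup_{x\in K}|V_{\Lambda_t}(x)-V_{\Lambda_s}(x)|\to 0$ as $t\to s$.

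To upgrade this local convergence to operator-norm continuity of the resolvents, I would introduce a smooth spatial cutoff $\chi_R\in C_c^\infty(\R^d)$ with $\chi_R\equiv 1$ on $B(0,R/2)$ and supported in $B(0,R)$, and split $W:=V_{\Lambda_s}-V_{\Lambda_t}=\chi_R W+(1-\chi_R)W$. The localized piece contributes at most $|\operatorname{Im} z|^{-2}\|\chi_R W\|_\infty$, which vanishes as $t\to s$ by the previous paragraph. The main obstacle is the tail piece $(1-\chi_R)W$: it is uniformly bounded in sup norm but does not vanish as $t\to s$, since Delone sets close in $\mathrm{Del}_{(r,R)}$ can differ substantially far from the origin. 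To handle it I would invoke a Combes--Thomas-type estimate: the resolvents $(H_{\Lambda_\bullet}-z)^{-1}$ have exponentially decaying off-diagonal behavior in $L^2(\R^d)$ with decay rate uniform in $t$, controlled by the lower bound $\operatorname{dist}(z,\sigma(H_{\Lambda_t}))\geq |\operatorname{Im} z|$. Careful bookkeeping with spatial cutoffs flanking both resolvents in the tail sandwich (for instance via a dyadic annular decomposition) should suppress its contribution by $O(e^{-\alpha R}\|W\|_\infty)$ for some $\alpha>0$ depending only on $\operatorname{Im} z$. Choosing $R$ first sufficiently large and then $|t-s|$ sufficiently small then yields norm-resolvent continuity; the delicate aspect is ensuring the exponential suppression is simultaneous in all relevant spatial regions while the Delone sets may carry far-field ``defects'' invisible to the $\mathrm{Del}_{(r,R)}$ topology.
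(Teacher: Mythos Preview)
Your reduction is more careful than the paper's own argument: the paper simply asserts that closeness of $\Lambda_1,\Lambda_2$ in $\mathrm{Del}_{(r,R)}$ forces $d_H\bigl((\Lambda_1-x)\cap B(0;M),(\Lambda_2-x)\cap B(0;M)\bigr)<\delta$ for \emph{every} $x\in\R^d$, and from this bounds $\mathrm{ess.}\sup|V_{\Lambda_1}-V_{\Lambda_2}|$ directly. You correctly observe that the Delone topology only controls that Hausdorff distance for $x$ in a bounded window, so the potentials converge merely locally uniformly. However, your proposed Combes--Thomas repair of the tail does not work. That estimate gives exponential \emph{off-diagonal} decay of the resolvent kernel, not decay of the operator norm at spatial infinity: if $\psi$ is a unit vector supported near some $x_0$ with $|x_0|\gg R$ lying where the defect $W=V_{\Lambda_s}-V_{\Lambda_t}$ lives, then $(H_{\Lambda_s}-z)^{-1}\psi$ remains concentrated near $x_0$, so $(1-\chi_R)W$ hits it with full strength and the sandwich has norm of order $|\mathrm{Im}z|^{-2}\|W\|_\infty$, independent of $R$. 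No annular decomposition helps, because the entire interaction is already local near $x_0$ and never sees the cutoff scale $R$.

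The obstruction is not merely technical; the lemma as stated appears to be false. Take $d=1$, $\Lambda_0=\Z$, and for $t\in(0,1]$ set $\Lambda_t=\{n+\psi(n-1/t):n\in\Z\}$ with $\psi\in C_c(-\tfrac12,\tfrac12)$, $\psi(0)\neq 0$ small. This path is continuous on all of $[0,1]$ in the Delone topology (on any fixed ball the perturbation vanishes once $1/t$ exceeds its radius, and for $t,t'>0$ the uniform continuity of $\psi$ gives continuity), yet for $t=1/k$ with $k\in\N$ one has $\Lambda_{1/k}=\Lambda^*+k$ where $\Lambda^*=(\Z\setminus\{0\})\cup\{\psi(0)\}$. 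Since (magnetic) translation by $k$ is unitary and $H_\Z$ is invariant under it, $\|(H_{\Lambda_{1/k}}-z)^{-1}-(H_\Z-z)^{-1}\|=\|(H_{\Lambda^*}-z)^{-1}-(H_\Z-z)^{-1}\|$, a fixed positive constant for generic $v$. Hence norm-resolvent continuity fails at $t=0$; only \emph{strong} resolvent continuity survives the far-field defects you flagged.
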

\begin{proof}
It is straightforward to see that for any $v\in C_c(\R^d)$, $v \in L^1_{K,r}(\R^d)$ and 
 the function 
$v_\Lambda(x) = \sum_{p \in \Lambda} v(p-x)$ is real-valued, measurable and essentially 
bounded for any $(r,R)$-Delone set $\Lambda$. 
Because $\Dom(H_{\Lambda})$ is constant for any $\Lambda \in \mathrm{Del}_{(r,R)}$, 
we can use the resolvent identity to bound
\begin{align*}
  \| (z- H_{\Lambda_1})^{-1}  - (z - H_{\Lambda_2})^{-1} \| 
  &= \| (z- H_{\Lambda_1})^{-1} (H_{\Lambda_1} - H_{\Lambda_2} ) (z- H_{\Lambda_2})^{-1} \|  \\
  &\leq \mathrm{ess.} \,\sup | v_{\Lambda_1} - v_{\Lambda_2} | \, 
  \| (z- H_{\Lambda_1})^{-1} \| \, \| (z- H_{\Lambda_2})^{-1} \|
\end{align*}
for any $z\in \C\setminus \R$. The result will therefore follow if we can show that 
the essential supremum is controlled by the topology on $\mathrm{Del}_{(r,R)}$. 
Suppose that $\mathrm{supp}(v) \subset B(0;M)$ for some $M>0$. Recalling the 
topology of $\mathrm{Del}_{(r,R)}$ (Proposition \ref{prop:Del_set_topology}) 
with $d_H$ the Hausdorff metric, we take $x\in \R^d$ and suppose that 
$d_H(\Lambda_1-x \cap B(0;M), \Lambda_2-x \cap B(0;M)) \leq \delta < r/2$. Taking 
$\delta$ small enough, we can ensure that 
$|\Lambda_1-x \cap B(0;M+r/2)| = |\Lambda_2-x \cap B(0;M+r/2)|$ and, furthermore, 
 we can 
decompose the sets $\Lambda_1-x \cap B(0;M+r/2)$ and $\Lambda_2-x \cap B(0;M+r/2)$ as 
pairs $(p,q) \in \Lambda_M:= \Lambda_1 \cap B(x;M+r/2) \times \Lambda_2 \cap B(x;M+r/2)$ 
such that $\|p-q\| \leq \delta$. We can therefore estimate
\begin{align*}
   |(v_{\Lambda_1} - v_{\Lambda_2})(x)| &= \Big| \sum_{p \in \Lambda_1} v(p-x) - \sum_{q\in \Lambda_2} v(q-x) \Big| 
   = \Big| \!\!\! \sum_{p \in \Lambda_1\cap B(x;M) }  \!\!\!    v(p-x) - 
     \!\!\!  \sum_{ q \in \Lambda_2 \cap B(x;M) } \!\!\!   v(q-x) \Big| \\
       &= \Big| \sum_{\substack{  (p,q) \in \Lambda_M  \\ \|p-q\| \leq \delta} } v(p-x) - v(q-x) \Big| 
       \leq  \sum_{\substack{  (p,q) \in \Lambda_M  \\ \|p-q\| \leq \delta} } \big|  v(p-x) - v(q-x) \big|.
\end{align*}
By continuity of $v$, given any $\epsilon>0$ we can choose a small enough $\delta$ so
that $\big|  v(p-x) - v(q-x) \big| <  \tfrac{\epsilon}{|\Lambda_1 \cap B(x;M+r/2)|}$ for all
$\|p-q\| \leq \delta$. Hence $|(v_{\Lambda_1} - v_{\Lambda_2})(x)| < \epsilon$ and the 
essential supremum is also bounded by $\epsilon$.
\end{proof}

\begin{defn}
Let $\{\Lambda_t\}_{t\in [0,1]}$ be a continuous path in $\mathrm{Del}_{(r,R)}$. We say 
that $\{H_{\Lambda_t}\}_{t\in[0,1]}$ is a gapped path if there exists a bounded interval 
$\Delta \subset \R$ such that for all $t \in [0,1]$ and $\calL_t \in \Omega_{\Lambda_t}$,
$\Delta\cap \sigma(H_{\calL_t})$ is non-empty and $\Delta$ is separated from the 
rest of the spectrum of $H_{\calL_t}$ by a positive distance.
\end{defn}

The conditions to obtain a gapped path are quite strong, but if satisfied give a path of 
operators $\{h_t\}_{t\in [0,1]}$ such that $h_t$ is affiliated to $C^*_r(\Omega_{\Lambda_t} \rtimes \R^d, \theta)$ 
and  $p_t = \chi_{\Delta}(h_t) \in \calA_t \subset C^*_r(\Omega_{\Lambda_t} \rtimes \R^d, \theta)$, 
a dense pre-$C^*$-algebra.

\begin{prop} 
Let $\{\Lambda_t\}_{t\in [0,1]}$ be a continuous path in $\mathrm{Del}_{(r,R)}$ and 
fix an atomic potential $v \in  C_c(\R^d)$. Suppose that 
$\{H_{\Lambda_t}\}_{t\in[0,1]}$ is a gapped path with isolated spectral region $\Delta \subset \R$. 
Then for $p_t = \{ \chi_{\Delta}(H_{\calL_t})\}_{\calL_t \in \Omega_{\Lambda_t}} \in \calA_t$ and 
any even integer $k\leq d$, the function 
$$
  [0,1] \ni t \mapsto C_k( p_t ) 
  = \frac{(-2\pi i)^{k/2}}{(k/2)!} \sum_{\tau\in S_k} 
  (-1)^\tau \, (\Tr_{\C^n}\otimes \calT) \Big(  p_t
     \prod_{j=1}^k \partial_{\tau(j)} p_t \Big) 
     \in \R
$$
is continuous, where $C_k(p)$ is the weak Chern number from Equation \eqref{eq:weak_Chern}.
\end{prop}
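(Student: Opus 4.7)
The plan is to transfer the cyclic cocycle formula for $C_k(p_t)$ onto the fixed Hilbert space $L^2(\R^d,\C^n)$ via the representation $\pi_{\Lambda_t}$, and then exploit the resolvent continuity established in Lemma \ref{lem:continuity_lattice_potential}.

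First I would apply Lemma \ref{lem:continuity_lattice_potential} to obtain norm continuity of $t\mapsto (z-H_{\Lambda_t})^{-1}$ for each $z\in\C\setminus\R$. The gapped path hypothesis gives a positive lower bound on $\mathrm{dist}(\Delta,\sigma(H_{\calL_t})\setminus\Delta)$ \emph{uniformly} in $(t,\calL_t)$, so one may choose a single rectifiable contour $\Gamma\subset\C\setminus\R$ enclosing $\Delta$ that works for every $t\in[0,1]$ and every $\calL_t\in\Omega_{\Lambda_t}$. The Riesz integral
$$
  \pi_{\Lambda_t}(p_t)\;=\;\frac{1}{2\pi i}\oint_\Gamma(z-H_{\Lambda_t})^{-1}\,\mathrm{d}z
$$
is therefore norm-continuous in $t$ as a bounded operator on $L^2(\R^d,\C^n)$.

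Next I would rewrite $C_k(p_t)$ in terms of this projection and the trace per unit volume $\Tr_{\mathrm{Vol}}$. A direct calculation from the definition of $\pi_\calL$ gives the intertwining relation $\pi_\calL(\partial_j f)=[\pi_\calL(f),X_j]$, where $X_j$ denotes the $j$-th position operator. Assuming $\bP$ is ergodic (which may be imposed by restricting to extremal invariant measures so that Birkhoff's theorem identifies $\calT$ with $\Tr_{\mathrm{Vol}}\circ\pi_{\calL_t}$ on $\bP$-generic base points) one obtains
$$
  C_k(p_t)=\frac{(-2\pi i)^{k/2}}{(k/2)!}\sum_{\tau\in S_k}(-1)^\tau(\Tr_{\C^n}\!\otimes\Tr_{\mathrm{Vol}})\!\Big(\pi_{\Lambda_t}(p_t)\prod_{j=1}^k[\pi_{\Lambda_t}(p_t),X_{\tau(j)}]\Big),
$$
evaluated at the $\bP$-generic base point $\Lambda_t\in\Omega_{\Lambda_t}$.

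The final step is to deduce that the right-hand side is continuous in $t$. Because $p_t\in\calA_t$ is the spectral projection of a gapped Hamiltonian, uniform Combes--Thomas bounds, depending only on the width of the spectral gap, on the fixed magnetic potential, and on $\|v\|_\infty$, yield $t$-independent exponential decay for the kernel of $(z-H_{\Lambda_t})^{-1}$ along $\Gamma$, and hence for the kernel of $\pi_{\Lambda_t}(p_t)$ and its commutators with each $X_j$. These transfer to uniform trace-norm bounds on the truncations $\chi_{B(0;N)}\bigl(\pi_{\Lambda_t}(p_t)\prod_j[\pi_{\Lambda_t}(p_t),X_{\tau(j)}]\bigr)\chi_{B(0;N)}$ with tails vanishing uniformly in $t$ as $N\to\infty$. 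Combined with norm continuity of $\pi_{\Lambda_t}(p_t)$ from the first step, a dominated-convergence argument gives continuity of the trace per unit volume and hence of $C_k(p_t)$. The main obstacle is establishing the uniform Combes--Thomas estimate across $\{H_{\Lambda_t}\}_{t\in[0,1]}$ in the absence of translation structure; once that uniformity is secured, continuity follows by standard approximation.
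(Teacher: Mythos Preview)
Your route differs from the paper's and contains a real gap. The paper never leaves the groupoid-algebra level: it writes $p_t$ as a contour integral of $(z-h_t)^{-1}$, invokes Lemma~\ref{lem:continuity_lattice_potential} to obtain norm continuity of $t\mapsto p_t$, and then simply observes that the cyclic functional $C_k$ is continuous for this topology. No representation on $L^2(\R^d)$, no trace per unit volume, no Combes--Thomas bounds, and no ergodicity enter.

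Your reduction, by contrast, requires ergodicity of $\bP$ in order to replace the abstract trace $\calT$ by $\Tr_{\mathrm{Vol}}\circ\pi_{\Lambda_t}$, but the proposition does not assume this. Even granting ergodicity, Birkhoff's theorem yields that identification only $\bP_t$-almost surely in $\Omega_{\Lambda_t}$; there is no reason the distinguished path point $\Lambda_t$ is $\bP_t$-generic, and both the hulls $\Omega_{\Lambda_t}$ and the associated null sets vary with $t$. So your displayed formula for $C_k(p_t)$ evaluated at the single base point $\Lambda_t$ is not justified, and the subsequent Combes--Thomas/dominated-convergence argument, even if the uniform estimate were secured, would be proving continuity of a quantity that has not been shown to equal $C_k(p_t)$. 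A minor slip: a closed contour enclosing $\Delta\subset\R$ cannot lie entirely in $\C\setminus\R$; it must merely avoid $\sigma(H_{\calL_t})$ for all relevant $(t,\calL_t)$.
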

\begin{proof}
The assumption on the spectral gap implies that $C_k( p_t )$ is well-defined 
for all $t$. Because we have a uniform isolated spectral region $\Delta$, we can write for all 
$t\in [0,1]$,
$$
  p_t = \frac{1}{2\pi i} \oint_{\mathcal{C}} (z- h_t)^{-1} \,\mathrm{d}z, \qquad 
  h_t = \{(z- H_{\calL_t})^{-1}\}_{\calL_t \in \Omega_{\Lambda_t}}
$$
with $\calC$ a contour enclosing $\Delta$ and not intersecting any other part of the spectrum. 
If $\{\Lambda_t\}_{t\in [0,1]}$ is a continuous path in $\mathrm{Del}_{(r,R)}$, then 
there is a corresponding continuous path $\{\calL_t\}_{t\in [0,1]}$ with 
$\calL_\bullet$ in the orbit space of $\Lambda_\bullet$. 
Because $t\mapsto (z-H_{\calL_t})^{-1}$ is norm-continuous 
by Lemma \ref{lem:continuity_lattice_potential}, 
so is $t\mapsto \| (z- h_t)^{-1} \|$. By the integral formula for the 
spectral projections, $t\mapsto \|p_t\|$ is  continuous. 
Because the functional $C_k$ induces a weaker topology than the norm topology, 
$t\mapsto C_k(p_t)$ is also continuous.
\end{proof}

Continuity of the function 
  $[0,1]\ni t \mapsto \|(z-h_t)^{-1}\| \in \R$ for all $z$ in the resolvent 
  set implies that the spectral edges of $\sigma(h_t)$ are continuous away from gap closing points,
  see~\cite{BeckusBel}.

Continuity of the Chern numbers under deformations of Delone sets means that if the range of the 
pairing is quantised, then it is constant under deformations.
\begin{cor} \label{cor:top_Chern_const_lattice_deform}
Let $\{\Lambda_t\}_{t\in [0,1]}$ be a continuous path in $\mathrm{Del}_{(r,R)}$ and 
fix an atomic potential $v \in C_c(\R^d)$. Suppose that 
$\{H_{\Lambda_t}\}_{t\in[0,1]}$ is a gapped path with isolated spectral region $\Delta \subset \R$.  
Let $\bP_0$ be an invariant and 
ergodic probability measure on $\Omega_{\Lambda_0}$. Then for almost all 
$\calL_0 \in \Omega_{\Lambda_0}$, 
\begin{equation*} \label{eq:Z_valued_lattice_deform_const}
  C_d( p_t )  = C_d( p_{\calL_0} )
  := \frac{(-2\pi i)^{d/2}}{(d/2)!} \sum_{\tau \in S_d} 
  (-1)^\tau \, (\Tr_{\C^n}\otimes \Tr_{\mathrm{Vol}}) \bigg( \chi_{\Delta}(H_{\calL_0}) 
     \prod_{j=1}^d [X_{\tau(j)},\chi_{\Delta}(H_{\calL_0})] \bigg)
\end{equation*}
is integer valued and constant for all $t\in [0,1]$.
\end{cor}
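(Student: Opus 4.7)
The plan is to combine the continuity of $t\mapsto C_d(p_t)$ established in the preceding proposition with the integer-valued identification of the top Chern number under an invariant ergodic measure, and then invoke connectedness of $[0,1]$.

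First I would handle integer-valuedness pointwise in $t$. For each $t\in[0,1]$, the translation action of $\R^d$ on the compact metric space $\Omega_{\Lambda_t}$ admits at least one invariant and ergodic probability measure $\bP_t$ (Krylov--Bogolyubov together with ergodic decomposition). With respect to such a measure, the earlier proposition on $C_d$ applies to the projection $p_t\in M_n(\calA_t)$, so $C_d(p_t)$ equals the trace-per-unit-volume expression
$$
\frac{(-2\pi i)^{d/2}}{(d/2)!}\sum_{\tau\in S_d}(-1)^\tau\,(\Tr_{\C^n}\otimes\Tr_{\mathrm{Vol}})\Big(\chi_\Delta(H_{\calL_t})\prod_{j=1}^d [X_{\tau(j)},\chi_\Delta(H_{\calL_t})]\Big)
$$
for $\bP_t$-almost every $\calL_t\in\Omega_{\Lambda_t}$, and this value lies in $\Z$.

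Second I would invoke the preceding continuity proposition applied at $k=d$ to conclude that $t\mapsto C_d(p_t)\in\R$ is continuous. A continuous $\Z$-valued function on the connected interval $[0,1]$ is necessarily constant. Finally, at $t=0$ the hypothesis that $\bP_0$ is invariant and ergodic gives $C_d(p_0)=C_d(p_{\calL_0})$ for $\bP_0$-almost every $\calL_0\in\Omega_{\Lambda_0}$ by the very same proposition, which identifies the common value and yields the displayed equation for all $t$.

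The main technical point to verify is that the preceding continuity proposition remains meaningful despite the family of traces $\calT_t$ depending on the (possibly non-canonical) choices of measures $\bP_t$ on the varying fibres $\Omega_{\Lambda_t}$. The proof of that proposition rests on norm-continuity of the resolvents $t\mapsto(z-h_t)^{-1}$ (via Lemma \ref{lem:continuity_lattice_potential}) together with the contour integral formula for $p_t$, and the only use of $\calT_t$ is that $|C_d(p_t)|$ is dominated by a $C^{*}$-norm estimate that is insensitive to the choice of invariant probability measure. Thus any selection $t\mapsto\bP_t$ of invariant ergodic measures produces the same integer-valued continuous function, which is what the connectedness argument needs.
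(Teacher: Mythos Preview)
Your argument is correct and follows the same route the paper takes: the corollary is stated immediately after the continuity proposition with only the one-sentence justification ``Continuity of the Chern numbers under deformations of Delone sets means that if the range of the pairing is quantised, then it is constant under deformations,'' i.e.\ exactly the continuity-plus-integrality-plus-connectedness reasoning you spell out. Your additional care in selecting an ergodic invariant measure $\bP_t$ for each $t$ (via Krylov--Bogolyubov and ergodic decomposition) and noting that the norm estimate underpinning continuity is uniform over probability measures makes explicit a point the paper leaves implicit, but the strategy is the same.
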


 Corollaries \ref{cor:top_Chern_const_lattice_deform} and  \ref{cor:nonzero_Chern_bad_Wannier} 
 then give us the following stability result on delocalised Wannier bases.

\begin{cor}[Stability of delocalised Wannier basis under atomic deformations]
Let $\Lambda_0$ be an $(r,R)$-Delone set and consider $H_{\Lambda_0}$ with 
atomic potential $v \in  C_c(\R^d)$. Fix an invariant and ergodic probability measure 
on $\Omega_{\Lambda_0}$ and  suppose that 
$C_d( p_{\calL_0} ) \neq 0$ for almost all $\calL_0 \in \Omega_{\Lambda_0}$.
Then for any gapped path  $\{H_{\Lambda_t}\}_{t\in[0,1]}$ with isolated 
spectral region $\Delta \subset \R$ and any $\calL_t$ in the transversal $\Omega_{0,t}$,  
there can not be a faster than polynomially decaying Wannier basis of 
$\chi_{\Delta}(H_{\calL_t}) L^2(\R^d,\C^n)$ built from magnetic translates in $\calL_t$.
\end{cor}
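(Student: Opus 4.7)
The plan is to chain together the two preceding corollaries. Corollary \ref{cor:top_Chern_const_lattice_deform} propagates the nonvanishing of the top Chern number along the deformation, and Corollary \ref{cor:nonzero_Chern_bad_Wannier} then converts this nonvanishing into an obstruction against localised Wannier bases at every time $t$.

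Concretely, I would first use the gapped-path hypothesis to observe that each family of spectral projections $p_t = \{\chi_\Delta(H_{\calL_t})\}_{\calL_t \in \Omega_{\Lambda_t}}$ lies in $M_n(\calA_t) \subset M_n(C^*_r(\Omega_{\Lambda_t}\rtimes\R^d,\theta))$, so that the top Chern number $C_d(p_t)$ is well-defined. Corollary \ref{cor:top_Chern_const_lattice_deform} then tells us that the integer-valued function $t\mapsto C_d(p_t)$ is constant on $[0,1]$ with common value $C_d(p_{\calL_0})$, which by hypothesis is nonzero for almost every $\calL_0 \in \Omega_{\Lambda_0}$. Hence $C_d(p_t)\neq 0$ for every $t\in[0,1]$.

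Next, I would fix any $t\in[0,1]$ and any $\calL_t$ in the transversal $\Omega_{0,t}$, and apply Corollary \ref{cor:nonzero_Chern_bad_Wannier} with $k=d$ to $p_t$. The nonvanishing of $C_d(p_t)$ rules out the existence of generators $\{w_j\}_{j=1}^m \subset p_t\,\calS_1(\calF_{\Omega_{0,t}},\theta)_{\calB_{1,t}}^{\oplus n}$ whose magnetic translates produce a Wannier basis of $\chi_\Delta(H_{\calL_t})L^2(\R^d,\C^n)$ satisfying the $(1+|x-y|^2)$-integrability condition. Any faster-than-polynomially decaying Wannier basis built from $\{w_j\}\subset p_t\,\calS_\infty(\calF_{\Omega_{0,t}},\theta)^{\oplus n}$ lies a fortiori in $p_t\,\calS_1(\calF_{\Omega_{0,t}},\theta)_{\calB_{1,t}}^{\oplus n}$ and automatically satisfies that integrability condition (any polynomial $L^2$-moment of a fast-decaying function is finite), so its existence would contradict the obstruction just obtained.

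There is no substantive analytic obstacle here; the statement is a direct concatenation of the continuity/integrality of the top Chern pairing along the path, the $K$-theoretic obstruction of Corollary \ref{cor:nonzero_Chern_bad_Wannier}, and the elementary nesting $\calS_\infty \subset \calS_1$ coming from Proposition \ref{prop: apply_BC}. The only point where one must exercise care is ensuring that the weak Chern pairing $C_d(p_t)$ at each $t>0$ is interpreted consistently with the $\Tr_{\mathrm{Vol}}$ formula at $\calL_0$ used in Corollary \ref{cor:top_Chern_const_lattice_deform}, but this consistency is precisely what that corollary supplies.
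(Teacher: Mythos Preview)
Your proposal is correct and matches the paper's own argument: the corollary is stated there as an immediate consequence of Corollary~\ref{cor:top_Chern_const_lattice_deform} (constancy of the integer top Chern number along the gapped path) together with Corollary~\ref{cor:nonzero_Chern_bad_Wannier} (nonzero Chern number obstructs a localised Wannier basis). Your added remark on the nesting $\calS_\infty\subset\calS_1$ is a reasonable elaboration but not needed beyond what those two corollaries already provide.
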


We again note that by Proposition \ref{prop:weak_loc_dichotomy} 
the non-existence of a faster than polynomially decaying Wannier basis  
also implies that the weaker localisation bound of Equation \eqref{eq:loc_dichotomy_sum} diverges, 
  $$
   \sum_{j=1}^m \int_{\R^d} (1+ |x|^2) \big| w_j^\calL(x)\big|^2 \, \mathrm{d}x = \infty.
$$

\subsection*{Acknowledgments}
The authors thank Franz Luef, Domenico Monaco and  Guo Chuan Thiang for 
valuable feedback on an earlier version of this manuscript. We also thank 
Giovanna Marcelli, Massimo Moscolari and Gianluca Panati for sharing the 
results of~\cite{HaldaneWannier, PanatiDeloneWannier} with us. CB is supported by 
a JSPS Grant-in-Aid for Early-Career Scientists (No. 19K14548) and thanks 
the Mathematical Institute, Universiteit Leiden, for hospitality during the conference 
\emph{Noncommutative Geometry, Analysis, and Topological Insulators} in February 2020, 
where this work took shape. 
Both authors thank the Casa Matematica Oaxaca for hospitality and support 
during the workshop \emph{Topological Phases of Interacting Quantum Systems} 
in June 2019.

\end{document}